\theoremstyle{definition}
\theoremstyle{remark}
\newtheorem{rem}{Remark}[section]
\renewcommand{\todo}[2][]{\tikzexternaldisable\@todo[#1]{#2}\tikzexternalenable}
\pgfplotsset{compat=1.14}
\pgfplotsset{
	% define the custom colormap
	colormap={my basis colormap}{
		rgb255=(0, 114, 189);
		rgb255=(54, 106, 148);
		rgb255=(108, 98, 107);
		rgb255=(163, 91, 66);
		rgb255=(217, 83, 25);
	}
}
\pgfplotsset{
	colormap={my parula}{
		rgb255=(53.0655, 42.406, 134.9460);
		rgb255=(20.2336, 132.6061, 211.9511);
		rgb255=(55.5463, 184.8859, 157.9118);
		rgb255=(208.7187, 186.8470,  89.1966);
		rgb255=(248.9565, 250.6905, 13.7190);
	}
}
\DeclareMathAlphabet{\mathpzc}{OT1}{pzc}{m}{it}
\definecolor{lightgray}{gray}{0.80}
\definecolor{lightgray}{gray}{0.80}
\renewcommand\geq\geqslant
\renewcommand\leq\leqslant
\newcommand{\gi}{lower-acyclic}
\newcommand{\ginoun}{lower-acyclicity}
\newcommand{\ginounC}{Lower-acyclicity}
\titleformat{\chapter}[block]
{\normalfont\huge\bfseries}{\thechapter.}{1em}{\huge}
\titlespacing*{\chapter}{0pt}{-19pt}{0pt}
\newcommand{\includetikz}[2]{%
%	\tikzsetnextfilename{#2/images/#1}%
%	\input{#2/scripts/#1}%
	\includegraphics{#2/images/#1}
}
\theoremstyle{plain}
\newtheorem{theorem}{Theorem}[section]
\newtheorem{proposition}[theorem]{Proposition}
\newtheorem{lemma}[theorem]{Lemma}
\newtheorem{corollary}[theorem]{Corollary}
\theoremstyle{remark}
\newtheorem{remark}[theorem]{Remark}
\newenvironment{example}
{\pushQED{\qed}\examplex}
{\popQED\endexamplex}
\theoremstyle{definition}
\newtheorem{definition}[theorem]{Definition}
\newcommand{\mbf}[1]{{\boldsymbol{#1}}}
\newcommand{\RR}{{\mathbb{R}}}
\newcommand{\PP}{{\mathcal{P}}}
\newcommand{\ZZ}{{\mathbb{Z}}}
\newcommand{\ZZP}{\mathbb{Z}_{\geq0}}
\newcommand{\dimwp}[1]{\dim{\left(#1\right)}}
\newcommand{\moplus}{\mathop{\oplus}}
\newcommand{\isomorphic}{\cong}
\newcommand\reallywidehat[1]{%
	\savestack{\tmpbox}{\stretchto{%
			\scaleto{%
				\scalerel*[\widthof{\ensuremath{#1}}]{\kern-.6pt\bigwedge\kern-.6pt}%
				{\rule[-\textheight/2]{1ex}{\textheight}}%WIDTH-LIMITED BIG WEDGE
			}{\textheight}% 
		}{0.5ex}}%
	\stackon[1pt]{#1}{\tmpbox}%
}
\newcommand\reallywidecheck[1]{%
	\savestack{\tmpbox}{\stretchto{%
			\scaleto{%
				\scalerel*[\widthof{\ensuremath{#1}}]{\kern-.6pt\bigvee\kern-.6pt}%
				{\rule[-\textheight/2]{1ex}{\textheight}}%WIDTH-LIMITED BIG WEDGE
			}{\textheight}% 
		}{0.5ex}}%
	\stackon[1pt]{#1}{\tmpbox}%
}
\def\Transpose #1{\romannumeral0\expandafter
	\Mar@Transpose@a\romannumeral`^^@\Mar@DoOneRow #1\\!\\}
\def\Mar@DoOneRow #1\\{\Mar@DoOneRow@a {}#1&^^@&}%
\def\Mar@DoOneRow@a #1#2&{%
	\if^^@\detokenize{#2}\expandafter\@gobble\fi
	\Mar@DoOneRow@a {#1#2\\}%
}%
\def\Mar@Transpose@a #1#2\\{\ifx!#2\expandafter\Mar@FinishTranspose\fi
	\expandafter\Mar@Transpose@b\romannumeral`^^@\Mar@DoOneRow@a {}#2&^^@&#1}
\def\Mar@Transpose@b #1#2^^@\\{\Mar@Join {}#2^^@!#1}
\def\Mar@Join #1#2\\#3!#4\\%
\def\Mar@EndJoin\Mar@Join #1^^@!^^@\\{\Mar@Transpose@a {#1^^@\\}}
\def\Mar@FinishTranspose
\newcommand{\face}{\sigma}
\newcommand{\edge}{\tau}
\newcommand{\vertex}{\gamma}
\newcommand{\interior}[1]{\accentset{\circ}{#1}}
\newcommand{\mesh}{\mathcal{T}}
\newcommand{\meshF}{\mesh_2}
\newcommand{\meshE}{\mesh_1}
\newcommand{\meshV}{\mesh_0}
\newcommand{\meshInterior}{\interior{\mesh}}
\newcommand{\meshInteriorE}{\meshInterior_1}
\newcommand{\meshInteriorV}{\meshInterior_0}
\newcommand{\boundary}{\partial}
\newcommand{\domain}{\Omega}
\newcommand{\domainInterior}{\interior{\domain}}
\newcommand{\meshEH}{\tensor*[^h]{\mesh}{_1}}
\newcommand{\meshEV}{\tensor*[^v]{\mesh}{_1}}
\newcommand{\meshInteriorEV}{\tensor*[^v]{\meshInterior}{_1}}
\newcommand{\ncells}[1]{\mathfrak{t}_{#1}}
\newcommand{\degreeu}{m}
\newcommand{\smooth}{r}
\newcommand{\bsmooth}{\mbf{r}}
\newcommand{\splSpace}{\mathcal{R}}
\newcommand{\bsmoothr}{{\mbf{s}}}
\newcommand{\elem}[1]{[#1]}
\newcommand{\faceE}[1]{\elem{\face_{#1}}}
\newcommand{\edgeE}[1]{\elem{\edge_{#1}}}
\newcommand{\vertexE}[1]{\elem{\vertex_{#1}}}
\newcommand{\ideal}[1][I]{\mathfrak{#1}}
\newcommand{\idealComplex}{\mathcal{I}}
\newcommand{\constantComplex}{\mathcal{C}}
\newcommand{\quotientComplex}{\mathcal{Q}}
\newcommand{\euler}[1]{\chi \left( #1 \right)}
\newcommand{\smin}[1]{M(#1)}
\def\ps@pprintTitle{%
	\let\@oddhead\@empty
	\let\@evenhead\@empty
	\def\@oddfoot{}%
	\let\@evenfoot\@oddfoot}
\begin{document}
	
	\title{Counting the dimension of splines of mixed smoothness:\\
		A general recipe, and its application to meshes of arbitrary topologies}
	\corref{cor1}
	\author[delft]{Deepesh Toshniwal}
	\ead{d.toshniwal@tudelft.nl}	
	\cortext[cor1]{Corresponding author}
	\address[delft]{Delft Institute of Applied Mathematics, Delft University of Technology, the Netherlands}
	\author[csu]{Michael DiPasquale}
	\ead{michael.dipasquale@colostate.edu}	
	\address[csu]{Department of Mathematics, Colorado State University, United States of America}

	\begin{abstract}
		In this paper we study the dimension of bivariate polynomial splines of mixed smoothness on polygonal meshes.
		Here, ``mixed smoothness'' refers to the choice of different orders of smoothness across different edges of the mesh.
		To study the dimension of spaces of such splines, we use tools from Homological Algebra.
		These tools were first applied to the study of splines by Billera (1988).
		Using them, estimation of the spline space dimension amounts to the study of the generalized Billera-Schenck-Stillman complex for the spline space.
		In particular, when the homology in positions one and zero of this complex are trivial, the dimension of the spline space can be computed combinatorially.  We call such spline spaces ``\gi{}.''
		In this paper, starting from a spline space which is \gi, we present sufficient conditions that ensure that the same will be true for the spline space obtained after relaxing the smoothness requirements across a subset of the mesh edges.
		This general recipe is applied in a specific setting: meshes of arbitrary topologies.  We show how our results can be used to compute the dimensions of spline spaces on triangulations, polygonal meshes, and T-meshes with holes.
%		Focusing on triangulations and T-meshes with holes, we show how our results can be used to compute the dimensions of spline spaces on them.
		%In particular, we show that the Alfeld--Schumaker dimension formula \cite{alfeld1990dimension} holds for triangulations of arbitrary topologies.
	\end{abstract}
	
	\begin{keyword}
		splines \sep polygonal meshes with holes \sep dimension formula \sep mixed smoothness
	\end{keyword}
	\maketitle
	
	% Introduction
	\section{Introduction}\label{sec:introduction}

Piecewise-polynomial functions called splines are foundational pillars that support modern computer-aided geometric design \cite{farin2002handbook}, numerical analysis \cite{iga-book}, etc.
These functions are defined on polyhedral partitions of $\RR^n$.
Their restriction to any polyhedron's interior is a polynomial, and these  polynomial pieces are constrained to join with some desired smoothness across hyperplanes supporting the intersections of neighbouring polyhedra.
Here, we study bivariate spline spaces -- i.e., $n = 2$ -- of mixed smoothness -- i.e., different orders of smoothness constraints are imposed across different edges of the partition.
From the perspective of approximation with splines, mixed smoothness is particularly interesting for capturing local, non-smooth (or even discontinuous) features in the target function; e.g., for shock-capturing in fluid dynamics, or for modelling smooth geometries with localized creases.
In particular, we study how the dimension of such spline spaces can be computed.

Computing the dimension of spline spaces is a highly non-trivial task in general for splines in more than one variable.  Initiated by Strang~\cite{strang1973piecewise,strang1974dimension}, this is by now a classical topic in approximation theory and has been studied in a wide range of planar settings; e.g., on triangulations, polygonal meshes, and T-meshes \cite{schumaker1984bounds,alfeld1987dimension,billera1988homology,schenck1997local,schenck1997family,mcdonald_schenck_09,dipasquale2018dimension,toshniwal2019polynomial,mourrain2014dimension,toshniwal_polynomial_2019}.
%\cite{schumaker1984bounds,alfeld1987dimension,billera1988homology,schenck1997family,dipasquale2018dimension,toshniwal2019polynomial} and on T-meshes \cite{mourrain2014dimension,toshniwal_polynomial_2019}.
Non-polynomial spline spaces have also been studied in the same vein; e.g., \cite{bracco_generalized_2016}.

In the present paper, instead of initiating the study of mixed-smoothness splines from scratch, we study them in relation to a proper subspace for which the dimension-computation problem is well-understood.
Several conceptually similar approaches have been recently formulated, inspired by applications of splines in numerical analysis and geometric modelling.
For instance, this approach was adopted to study splines on locally subdivided meshes in \cite{schenck_subdivision_2018}; to study splines with local polynomial-degree adaptivity in \cite{toshniwal2019polynomial,toshniwal_polynomial_2019}; and to study mixed-smoothness splines on T-meshes in \cite{toshniwal_mixed_2019}.

More specifically, we derive sufficient conditions that help describe mixed-smoothness spline spaces as \emph{lower-acyclic}, i.e., as spaces for which the dimension can be computed combinatorially using only local geometric information.
Working on a polygonal mesh in $\RR^2$, we start from a spline space $\splSpace^\bsmooth$ whose members are constrained to be at least $\bsmooth(\edge)$ smooth across edge $\edge$ of the mesh.
Then, given that $\splSpace^\bsmooth$ is lower-acyclic, we derive sufficient conditions for $\splSpace^\bsmoothr \supseteq \splSpace^\bsmooth$ to be lower-acyclic, where $\bsmoothr(\edge) \leq \bsmooth(\edge)$ for all edges $\edge$.
We use methods from homological algebra to derive these results; see Section \ref{sec:smoothness_reduction}.

The sufficient conditions derived are highly general and are applicable to a wide variety of non-standard spline spaces.
In order to examine the conditions in practice, we narrow our focus down to a specific application: dimension computation for spline spaces on meshes of arbitrary topologies; e.g., see the figures below:
\begin{figure}[h]
	\centering
	\subcaptionbox{}[0.32\textwidth]{\resizebox{0.2\textwidth}{!}{\includetikz{tri_2holes}{./tikz}}}
	\subcaptionbox{}[0.32\textwidth]{\resizebox{0.2\textwidth}{!}{\includetikz{abstract-figure0}{./tikz}}}
	\subcaptionbox{}[0.32\textwidth]{\resizebox{0.2\textwidth}{!}{\begin{tikzpicture}[scale=3]
\tikzset{
	bThickness/.style={line width=#1\pgflinewidth},
	bThickness/.default={2},
}

\tikzset{
	eThickness/.style={line width=#1\pgflinewidth},
	eThickness/.default={0.5},
}

\begin{scope}
\coordinate (v0) at (0.92, 0.38) {};
\coordinate (v1) at (0.38, 0.92) {};
\coordinate (v2) at (-0.38, 0.92) {};
\coordinate (v3) at (-0.92, 0.38) {}; 
\coordinate (v4) at (-0.92, -0.38) {};
\coordinate (v5) at (-0.38, -0.92) {}; 
\coordinate (v6) at (0.38, -0.92) {};
\coordinate (v7) at (0.92, -0.38) {};
\coordinate (v8) at (0.54, 0.54) {};
\coordinate (v9) at (-0.52, 0.54) {};
\coordinate (v10) at (-0.52, -0.52) {};
\coordinate (v11) at (0.54, -0.52) {}; 
\coordinate (v12) at (0.31, 0.31) {}; 
\coordinate (v13) at (-0.26, 0.31) {}; 
\coordinate (v14) at (-0.26, -0.26) {};
\coordinate (v15) at (0.31, -0.26) {};
\coordinate (v16) at (0.24, 0.16) {}; 
\coordinate (v17) at (0.15, 0.24) {};
\coordinate (v18) at (-0.090, 0.24) {};
\coordinate (v19) at (-0.17, 0.16) {}; 
\coordinate (v20) at (-0.17, -0.088) {};
\coordinate (v21) at (-0.091, -0.17) {};
\coordinate (v22) at (0.15, -0.17) {};
\coordinate (v23) at (0.24, -0.089) {};

\draw[eThickness] (v0.center) -- (v1.center) -- (v8.center) -- cycle;
\draw[eThickness] (v2.center) -- (v3.center) -- (v9.center) -- cycle;
\draw[eThickness] (v4.center) -- (v5.center) -- (v10.center) -- cycle;
\draw[eThickness] (v6.center) -- (v7.center) -- (v11.center) -- cycle;
\draw[eThickness] (v15.center) -- (v22.center) -- (v23.center) -- cycle;
\draw[eThickness] (v12.center) -- (v16.center) -- (v17.center) -- cycle;
\draw[eThickness] (v13.center) -- (v18.center) -- (v19.center) -- cycle;
\draw[eThickness] (v14.center) -- (v20.center) -- (v21.center) -- cycle;
\draw[eThickness] (v0.center) -- (v8.center) -- (v12.center) -- (v16.center) -- (v23.center) -- (v15.center) -- (v11.center) -- (v7.center) -- cycle;
\draw[eThickness] (v1.center) -- (v2.center) -- (v9.center) -- (v13.center) -- (v18.center) -- (v17.center) -- (v12.center) -- (v8.center) -- cycle;
\draw[eThickness] (v3.center) -- (v4.center) -- (v10.center) -- (v14.center) -- (v20.center) -- (v19.center) -- (v13.center) -- (v9.center) -- cycle;
\draw[eThickness] (v5.center) -- (v6.center) -- (v11.center) -- (v15.center) -- (v22.center) -- (v21.center) -- (v14.center) -- (v10.center) -- cycle;

%\filldraw[fill=gray] (v16.center) -- (v17.center) -- (v18.center) -- (v19.center) -- (v20.center) -- (v21.center) -- (v22.center) -- (v23.center) -- cycle;
\draw[bThickness] (v16.center) -- (v17.center) -- (v18.center) -- (v19.center) -- (v20.center) -- (v21.center) -- (v22.center) -- (v23.center) -- cycle;
\draw[bThickness] (v0.center) -- (v1.center) -- (v2.center) -- (v3.center) -- (v4.center) -- (v5.center) -- (v6.center) -- (v7.center) -- cycle;
\end{scope}
\end{tikzpicture}}}
	\caption{We study the dimension of splines on polygonal meshes of arbitrary topologies in Section \ref{sec:examples}.
	Here, the mesh boundaries have been displayed in bold.}
\end{figure}

Such splines enable geometric modelling of and numerical analysis on arbitrary smooth surfaces \cite{toshniwal2017smooth}, and are very useful in applications.
We investigate the application of our results to the following particular cases; see Section \ref{sec:examples} for the details:
\begin{itemize}
	\item total-degree splines on triangulations and polygonal meshes containing holes;
	\item mixed bi-degree splines on T-meshes containing holes.
\end{itemize}
%
	% Meshes and polynomial splines
	\section{Preliminaries: splines, meshes and homology}\label{sec:preliminaries}

This section will introduce the relevant notation that we will use for working with polynomial splines on triangulations and T-meshes.

\subsection{Bivariate splines on planar meshes}\label{ss:meshes_and_splines}

\begin{definition}[Mesh]\label{def:mesh}
	A mesh $\mesh$ of $\RR^2$ is defined as:
	\begin{itemize}
		\item a finite collection $\meshF$ of polygons $\face$ that we consider as open sets of $\RR^2$ having non-zero measure, called $2$-cells or faces, together with
		\item a finite set $\meshE$ of closed segments $\edge$, called $1$-cells, which are edges of the (closure of the) faces $\sigma\in\meshF$, and
		\item the set $\meshV$, of vertices $\vertex$, called $0$-cells,  of the edges $\tau\in\meshE$,
	\end{itemize}
	such that the following properties are satisfied:
	\begin{itemize}
		\item $\face \in \meshF \Rightarrow$ the boundary $\boundary\face$ of $\sigma$ is a finite union of edges in $\meshE$,
		\item $\face, \face' \in \meshF \Rightarrow \face \cap \face' = \boundary\face \cap \boundary\face'$ is a finite union of edges in $\meshE \cup \meshV$, and,
		\item $\edge, \edge' \in \meshE \text{ with } \edge \neq \edge' \Rightarrow \edge\cap \edge' %= \boundary\edge \cap \boundary\edge'
		\in \meshV$.
	\end{itemize}
	The domain of the mesh is assumed to be connected and is defined as $\domain := \cup_{\face\in\meshF}\face \subset \RR^2$.
\end{definition}

%\begin{assumption}\label{ass:simplyConnectedDomain}
%	The interior of the domain $\domain$ is connected. 
%\end{assumption}

The closures of the mesh faces $\face$ will be denoted by $\overline{\face}$.
Edges of the mesh will be called interior edges if they intersect the interior of the domain of the mesh, $\domainInterior$.
Otherwise, they will be called boundary edges. 
The set of interior edges will be denoted by $\meshInteriorE$.
Similarly, if a vertex is in $\domainInterior$ it will be called an interior vertex, and a boundary vertex otherwise.
The set of interior vertices will be denoted by $\meshInteriorV$.
We will denote the number of $i$-cells with $\ncells{i} := \# \mesh_i$.

The first ingredient we need for defining polynomial splines on $\mesh$ are vector spaces of polynomials attached to each face of the mesh.
More precisely, to each face $\face$ of the mesh, we will assign a vector space of (total degree or bi-degree) polynomials denoted by $\PP_\face$,
\begin{equation}
	\mbf{\degreeu}~:~ \face \mapsto \PP_\face\;.
\end{equation}
If the closures of faces $\face$ and $\face'$ have non-empty intersection, then we will assume that $\PP_\face + \PP_\face'$ is either $\PP_\face$ or $\PP_\face'$.
Then, we can use $\PP_\face$ to assign vector spaces of polynomials to the edges and vertices of $\mesh$.
Denoting these by $\PP_\edge$ and $\PP_\vertex$, respectively, for $\edge \in \meshE$ and $\vertex \in \meshV$, we define them as follows,
\begin{equation}
	\PP_\edge := \sum_{\overline{\face} \supset \edge} \PP_\face\;,
	\qquad
	\PP_\vertex := \sum_{\overline{\face} \ni \vertex} \PP_\face\;.
	\label{eq:edge_vertex_spaces}
\end{equation}
The above assignment of vector spaces to faces, edges and vertices of $\mesh$ will be assumed to be fixed throughout this document.

The second and final ingredient that we need for defining splines on $\mesh$ is a smoothness distribution on its edges.
The objective of this paper is to study how the dimension of the space of splines on $\mesh$ (which will be defined shortly) changes with the smoothness distribution; the latter object is defined as follows.

\begin{definition}[Smoothness distribution]
	The map $\bsmooth : \meshE \rightarrow \ZZ_{\geq -1}$ is called a smoothness distribution if $\bsmooth(\edge) = -1$ for all $\edge \notin \meshInteriorE$.
\end{definition}

Using this notation, we can now define the spline space $\splSpace^\bsmooth$ that forms the object of our study.
From the following definition and the definition of $\bsmooth$, it will be clear that we are interested in obtaining highly local control over the smoothness of splines in $\splSpace^\bsmooth$, a feature that is missing from most of the existing literature.
This problem has been addressed in a recent paper \cite{toshniwal_mixed_2019}, but in a restriction setting where it is assumed that (a) $\mesh$ is a T-mesh and (b) $\PP_\face = \PP_{\face'}$ for any $\face, \face' \in \meshF$.
Thus, the setting of the present paper is much more general.

\begin{definition}[Spline space]
	The spline space $\splSpace^\bsmooth \equiv \splSpace^{\bsmooth}_{\mbf{\degreeu}}(\mesh)$ as
	\begin{equation}
	\begin{split}
	\splSpace^{\bsmooth}_{\mbf{\degreeu}}(\mesh) := \bigg\{f\colon &\forall \face \in \meshF~~f|_\face \in \PP_{\face}\;,\\
	&\forall \edge \in \meshInteriorE~~f \text{~is~} C^{\bsmooth(\edge)}\text{~smooth across~}\edge\bigg\}\;.
	\end{split}
	\end{equation}
	%	\todo[inline]{N: if -1 is allowed then the rectangles should be defined as open sets... $f\in C^{-1}(\Omega)$? $f\in \oplus_{\sigma\in\meshF}\PP_{\degreeu\degreev}$?\\ D: Correct; modified Definition 2.1.}
\end{definition}

From the above definition, the pieces of all splines in $\splSpace^\bsmooth$ are constrained to meet with smoothness $\bsmooth(\edge)$ at an interior edge $\edge$.
We will use the following algebraic characterization of smoothness in this document.
\begin{lemma}[Billera \cite{billera1988homology}]\label{lem:smoothness}
	For $\face, \face' \in \meshF$, let $\face \cap \face' = \edge \in \meshInteriorE$, and consider a piecewise polynomial function equalling $p$ and $q$ on $\face$ and $\face'$, respectively.
	Then, this piecewise polynomial function is at least $\smooth$ times continuously differentiable across $\edge$ if and only if
	\begin{equation}
	\ell_\edge^{\smooth+1}~\big|~p - q\;,
	\end{equation}
	where $\ell_\edge$ is a non-zero linear polynomial vanishing on $\edge$.
\end{lemma}

In line with the above characterization and for each interior edge $\edge$, we define $\ideal^\bsmooth_\edge$ to be the vector subspace of $\PP_{\edge}$ that contains all polynomial multiples of $ \ell_\edge^{\bsmooth(\edge)+1}$; when $\bsmooth(\edge) = -1$, $\ideal^\bsmooth_\edge$ is simply defined to be $\PP_{\edge}$.
Similarly, for each interior vertex $\vertex$, we define $\ideal^\bsmooth_\vertex := \sum_{ \edge\ni \vertex} \ideal^\bsmooth_\edge$.
Here, we have suppressed the dependence of $\ideal^\bsmooth_\edge$ and $\ideal^\bsmooth_\vertex$ on $\mbf{\degreeu}$ to simplify the reading (and writing) of the text.

\subsection{Topological chain complexes}\label{ss:homological_interpretation}
Any spline $f \in \splSpace^\bsmooth$ is a piecewise polynomial function on $\mesh$.
We can explicitly refer to its piecewise polynomial nature by equivalently expressing it $\sum_{\face} \faceE{} f_\face$ with $f_\face := f|_\face$.
This notation makes it clear that the polynomial $f_\face$ is attached to the face $\face$ of $\mesh$.
Using this notation and Lemma \ref{lem:smoothness}, the spline space $\splSpace^\bsmooth$ can be equivalently expressed as the kernel of the map $\overline{\boundary}$,
\begin{equation}
\overline{\boundary} : \moplus_{\face \in \meshF} \faceE{} \PP_{\face} \rightarrow \moplus_{\edge \in \meshInteriorE} \edgeE{} \PP_{\edge}/\ideal^\bsmooth_\edge\;.
\end{equation}
defined by composing the boundary map $\boundary$ with the natural quotient map.

As a result of this observation, the spline space $\splSpace^\bsmooth$ can be interpreted as the top homology of a suitably defined chain complex $\quotientComplex^\bsmooth$,
\begin{equation}
	\begin{tikzcd}%[ampersand replacement=\&]
		\quotientComplex^\bsmooth~:~ & \bigoplus\limits_{\face \in \meshF} \faceE{} \PP_{\face} \arrow[r] & \bigoplus\limits_{\edge \in \meshInteriorE} \edgeE{} \PP_{\edge}/\ideal^\bsmooth_\edge \arrow[r] & \bigoplus\limits_{\vertex \in \meshInteriorV} \vertexE{} \PP_{\vertex}/\ideal^\bsmooth_\vertex \arrow[r] & 0\;.
	\end{tikzcd}
\end{equation}
The above is the generalized Billera-Schenck-Stillman complex (abbreviated as gBSS).
The Billera-Schenck-Stillman complex was first introduced in \cite{billera1988homology,schenck1997family}, and the gBSS was first introduced in \cite{toshniwal2019polynomial,toshniwal_polynomial_2019}.
As in \cite{billera1988homology,schenck1997family,mourrain2014dimension}, we will study $\quotientComplex$ using the following short exact sequence of chain complexes,
\begin{equation}
	\begin{tikzcd}%[ampersand replacement=\&]
		~ & ~ & 0 \arrow[d,""] & 0 \arrow[d,""] & ~ \\
		%%%
		\idealComplex^\bsmooth~:~ & 0\arrow[r] \arrow[d]& \bigoplus\limits_{\edge \in \meshInteriorE} \edgeE{} \ideal^\bsmooth_\edge \arrow[r] \arrow[d] & \bigoplus\limits_{\vertex \in \meshInteriorV} \vertexE{} \ideal^\bsmooth_\vertex \arrow[r] \arrow[d] & 0 \\
		%%%
		\constantComplex~:~ & \bigoplus\limits_{\face \in \meshF} \faceE{} \PP_{\face} \arrow[r] \arrow[d] & \bigoplus\limits_{\edge \in \meshInteriorE} \edgeE{} \PP_{\edge} \arrow[r] \arrow[d] & \bigoplus\limits_{\vertex \in \meshInteriorV} \vertexE{} \PP_{\vertex} \arrow[r] \arrow[d] & 0 \\
		%%%
		\quotientComplex^\bsmooth~:~ & \bigoplus\limits_{\face \in \meshF} \faceE{} \PP_{\face} \arrow[r] & \bigoplus\limits_{\edge \in \meshInteriorE} \edgeE{} \PP_{\edge}/\ideal^\bsmooth_\edge \arrow[r] \arrow[d] & \bigoplus\limits_{\vertex \in \meshInteriorV} \vertexE{} \PP_{\vertex}/\ideal^\bsmooth_\vertex \arrow[r] \arrow[d] & 0 \\
		%%%
		~ & ~ & 0 & 0 & ~
	\end{tikzcd}
\label{eq:complex}
\end{equation}

\begin{definition}[\ginounC{}  of gBSS]
	The complex $\quotientComplex^\bsmooth$ will be called \gi{} if its homologies in positions one and zero are trivial, i.e., if $H_1(\quotientComplex^\bsmooth) = 0 = H_0(\quotientComplex^\bsmooth)$.
%	Else, we will say that it displays \gdnoun{}.
\end{definition}

\ginounC{} of the gBSS is interesting precisely because it is a sufficient condition for ensuring that the dimension of $\splSpace^\bsmooth$ can be computed using only local combinatorial data, with the computation being unaffected by the global geometry of $\mesh$.

\begin{theorem}\label{thm:geom_indep_dim}
	If $\quotientComplex^\bsmooth$ is \gi{}, then the dimension of $\splSpace^\bsmooth$ can be combinatorially computed,
	\begin{equation}
	\begin{split}
		\dimwp{\splSpace^\bsmooth} &= \euler{\quotientComplex^\bsmooth}\;,
	\end{split}
	\end{equation}
	where $\euler{\quotientComplex^\bsmooth}$ is the Euler characteristic of the complex $\quotientComplex^\bsmooth$.
	Moreover,
	\begin{equation}
		H_0(\idealComplex^\bsmooth)
		\isomorphic
		H_0(\constantComplex)\;.
	\end{equation}
\end{theorem}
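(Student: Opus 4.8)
The plan is to read off both assertions directly from the homology long exact sequence attached to the short exact sequence of complexes $0 \to \idealComplex^\bsmooth \to \constantComplex \to \quotientComplex^\bsmooth \to 0$ displayed in \eqref{eq:complex}, together with the elementary fact that the Euler characteristic of a bounded complex of finite-dimensional vector spaces equals the alternating sum of the dimensions of its homologies.

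First I would record that $\splSpace^\bsmooth$ is exactly the top homology $H_2(\quotientComplex^\bsmooth)$: by construction $\splSpace^\bsmooth = \ker\overline{\boundary}$, and since $\quotientComplex^\bsmooth$ vanishes in homological degree three the faces term has no incoming differential, so its homology there is just the kernel of the outgoing map. All terms appearing in $\quotientComplex^\bsmooth$ are finite-dimensional (finite direct sums of polynomial spaces), and $\euler{\quotientComplex^\bsmooth}$ is by definition the alternating sum of their dimensions, which is manifestly a sum of local, combinatorially computable quantities. Since Euler characteristic is a homological invariant, $\euler{\quotientComplex^\bsmooth} = \sum_i (-1)^i \dimwp{H_i(\quotientComplex^\bsmooth)}$. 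Invoking \ginoun{}, $H_1(\quotientComplex^\bsmooth) = H_0(\quotientComplex^\bsmooth) = 0$, so the right-hand side collapses to $\dimwp{H_2(\quotientComplex^\bsmooth)} = \dimwp{\splSpace^\bsmooth}$, which is the first assertion.

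For the ``moreover'' part, I would write out the long exact sequence in homology induced by the short exact sequence of complexes in \eqref{eq:complex}; its tail reads
\begin{equation}
\cdots \to H_1(\quotientComplex^\bsmooth) \to H_0(\idealComplex^\bsmooth) \to H_0(\constantComplex) \to H_0(\quotientComplex^\bsmooth) \to 0,
\end{equation}
where the middle map is induced by the inclusion $\idealComplex^\bsmooth \hookrightarrow \constantComplex$. \ginounC{} of $\quotientComplex^\bsmooth$ makes the two flanking terms vanish: exactness at $H_0(\idealComplex^\bsmooth)$ then forces this map to be injective (its kernel is the image of the connecting map out of $H_1(\quotientComplex^\bsmooth) = 0$), and exactness at $H_0(\constantComplex)$ forces it to be surjective (its image is the kernel of the map into $H_0(\quotientComplex^\bsmooth) = 0$). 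Hence $H_0(\idealComplex^\bsmooth) \isomorphic H_0(\constantComplex)$.

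There is no substantive obstacle here: both statements are formal consequences of the diagram \eqref{eq:complex} once one has the identification $\splSpace^\bsmooth = H_2(\quotientComplex^\bsmooth)$ and the two standard homological inputs (Euler characteristic is a homological invariant; a short exact sequence of complexes yields a long exact sequence in homology). The only point deserving a line of care is bookkeeping of the long exact sequence — the direction of the connecting homomorphism and the fact that $\idealComplex^\bsmooth$ is concentrated in degrees $0$ and $1$ — but this does not affect the tail used above, and the genuine mathematical work lies entirely in the hypothesis of \ginoun{}, not in deducing the theorem from it.
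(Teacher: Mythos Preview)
Your proof is correct and follows essentially the same approach as the paper: both deduce the first assertion from the equality of the Euler characteristic with the alternating sum of homology dimensions combined with \ginoun{}, and both read off the second assertion from the tail of the long exact sequence in homology associated to \eqref{eq:complex}. Your write-up is somewhat more explicit about why the middle map is an isomorphism, but the argument is the same.
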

\begin{proof}
	The first claim follows from the definition of the Euler characteristic of $\quotientComplex^\bsmooth$,
	\begin{equation}
	\begin{split}
		\euler{\quotientComplex^\bsmooth} &= \dimwp{\quotientComplex^\bsmooth_2} - \dimwp{\quotientComplex^\bsmooth_1} + \dimwp{\quotientComplex^\bsmooth_0}\;,\\
		&= \dimwp{H_2(\quotientComplex^\bsmooth)} - \dimwp{H_1(\quotientComplex^\bsmooth)} + \dimwp{H_0(\quotientComplex^\bsmooth)}\;,\\
		&= \dimwp{\splSpace^\bsmooth} - \dimwp{H_1(\quotientComplex^\bsmooth)} + \dimwp{H_0(\quotientComplex^\bsmooth)}\;.
	\end{split}
	\end{equation}
	The second part of the claim follows from the long exact sequence of homologies implied by the short exact sequence of chain complexes in Equation \eqref{eq:complex},
	\begin{equation}
		\begin{tikzcd}
			\cdots \arrow[r]
			& H_1(\constantComplex) \arrow[r]
			& H_1(\quotientComplex^\bsmooth) \arrow[r]
			& H_0(\idealComplex^\bsmooth) \arrow[r] 
			& H_0(\constantComplex) \arrow[r]
			& H_0(\quotientComplex^\bsmooth) \arrow[r]
			& 0\;.
		\end{tikzcd}
	\end{equation}
\end{proof}
	
	% smoothness reduction on a subset of the edges
	\section{Spline space $\splSpace^{\bsmoothr} \supseteq \splSpace^\bsmooth$ of reduced regularity}\label{sec:smoothness_reduction}

We present our main results in this section.
In particular, we will relate the dimension of the spline space $\splSpace^\bsmooth$ to the dimension of a spline space $\splSpace^{\bsmoothr}$ obtained by relaxing the regularity requirements.
That is, for all interior edges $\edge$, it will be assumed that $\bsmoothr(\edge) \leq \bsmooth(\edge)$.
This relationship will be utilized to present sufficient conditions for \ginoun{} of the gBSS defined for $\splSpace^{\bsmoothr}$.

For the spline space $\splSpace^{\bsmoothr}$, let the first and last chain complexes in Equation \eqref{eq:complex} be denoted by $\idealComplex^{\bsmoothr}$ and $\quotientComplex^{\bsmoothr}$, respectively.
Then, by definition of the smoothness distributions $\bsmooth$ and $\bsmoothr$, we have the following inclusion map from $\idealComplex^{\bsmooth}$ to $\idealComplex^{\bsmoothr}$,
\begin{equation}
	\idealComplex^\bsmooth \rightarrow \idealComplex^{\bsmoothr}\;.
\end{equation}
Since both complexes are also included in the complex $\constantComplex$, we can build the following commuting diagram between two short exact sequences of chain complexes,
\begin{equation}
	\begin{tikzcd}%[ampersand replacement=\&]
	0 \arrow[r] & \idealComplex^{\bsmooth} \arrow[r]\arrow[d] & \idealComplex^{\bsmoothr} \arrow[r] \arrow[d]& \idealComplex^{\bsmoothr} / \idealComplex^{\bsmooth} \arrow[r] \arrow[d] & 0\\
	0 \arrow[r] & \constantComplex \arrow[r] & \constantComplex \arrow[r] & 0 \arrow[r] & 0
	\end{tikzcd}
	\label{eq:double_complex_ideals}
\end{equation}

\begin{proposition}\label{prop:ideal_homology}
	If $H_0(\idealComplex^{\bsmoothr} / \idealComplex^{\bsmooth}) = 0$ and $\quotientComplex^\bsmooth$ is \gi{}, then the following hold,
	\begin{equation}
		H_0(\idealComplex^{\bsmoothr}) \isomorphic H_0(\constantComplex) \isomorphic H_0(\idealComplex^{\bsmooth})\;,
		\qquad
		H_0(\quotientComplex^{\bsmoothr}) = 0\;.
	\end{equation}
\end{proposition}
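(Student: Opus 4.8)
The plan is to obtain both assertions from two homology long exact sequences, using as the only non-formal input the isomorphism $H_0(\idealComplex^{\bsmooth}) \isomorphic H_0(\constantComplex)$ already furnished by Theorem~\ref{thm:geom_indep_dim}. It is worth recording at the outset that this isomorphism is precisely the map induced on $H_0$ by the inclusion of chain complexes $\idealComplex^{\bsmooth}\hookrightarrow\constantComplex$ that appears in \eqref{eq:complex}: it drops straight out of the homology long exact sequence of the short exact sequence $0\to\idealComplex^{\bsmooth}\to\constantComplex\to\quotientComplex^{\bsmooth}\to 0$ once the vanishing $H_1(\quotientComplex^{\bsmooth})=0=H_0(\quotientComplex^{\bsmooth})$ (i.e.\ \ginoun{} of $\quotientComplex^{\bsmooth}$) is invoked. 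This identification matters because the rest of the argument is a functoriality/diagram chase and needs to know which maps it is chasing.

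First I would take the homology long exact sequence of the top row of \eqref{eq:double_complex_ideals}, that is of $0\to\idealComplex^{\bsmooth}\to\idealComplex^{\bsmoothr}\to\idealComplex^{\bsmoothr}/\idealComplex^{\bsmooth}\to 0$. Its tail reads
\[
H_1(\idealComplex^{\bsmoothr}/\idealComplex^{\bsmooth})\longrightarrow H_0(\idealComplex^{\bsmooth})\xrightarrow{\ \iota_*\ }H_0(\idealComplex^{\bsmoothr})\longrightarrow H_0(\idealComplex^{\bsmoothr}/\idealComplex^{\bsmooth})\longrightarrow 0,
\]
so the hypothesis $H_0(\idealComplex^{\bsmoothr}/\idealComplex^{\bsmooth})=0$ makes $\iota_*$ surjective. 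Since the inclusions $\idealComplex^{\bsmooth}\hookrightarrow\idealComplex^{\bsmoothr}\hookrightarrow\constantComplex$ compose to the inclusion $\idealComplex^{\bsmooth}\hookrightarrow\constantComplex$, functoriality of $H_0$ gives that the composite $H_0(\idealComplex^{\bsmooth})\xrightarrow{\iota_*}H_0(\idealComplex^{\bsmoothr})\xrightarrow{\ j_*\ }H_0(\constantComplex)$ is exactly the isomorphism of Theorem~\ref{thm:geom_indep_dim}. A surjection that becomes an isomorphism after post-composition with a map is itself injective; hence $\iota_*$ is an isomorphism, and therefore $j_*=(j_*\iota_*)\circ\iota_*^{-1}$ is an isomorphism too. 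This proves the first assertion, $H_0(\idealComplex^{\bsmoothr})\isomorphic H_0(\constantComplex)\isomorphic H_0(\idealComplex^{\bsmooth})$.

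For $H_0(\quotientComplex^{\bsmoothr})=0$ I would then invoke the short exact sequence of chain complexes $0\to\idealComplex^{\bsmoothr}\to\constantComplex\to\quotientComplex^{\bsmoothr}\to 0$ (the $\bsmoothr$ instance of \eqref{eq:complex}). The tail of its homology long exact sequence is
\[
H_0(\idealComplex^{\bsmoothr})\xrightarrow{\ j_*\ }H_0(\constantComplex)\longrightarrow H_0(\quotientComplex^{\bsmoothr})\longrightarrow 0,
\]
and since $j_*$ was just shown to be surjective (indeed an isomorphism) we conclude $H_0(\quotientComplex^{\bsmoothr})=\coker(j_*)=0$.

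I do not expect a genuine obstacle: each step is a one-line consequence of exactness together with functoriality. The points that need care are administrative — confirming that the two sequences of chain complexes used ($0\to\idealComplex^{\bsmooth}\to\idealComplex^{\bsmoothr}\to\idealComplex^{\bsmoothr}/\idealComplex^{\bsmooth}\to 0$ and $0\to\idealComplex^{\bsmoothr}\to\constantComplex\to\quotientComplex^{\bsmoothr}\to 0$) are exact in every homological degree, which is how \eqref{eq:complex} and \eqref{eq:double_complex_ideals} are already displayed, and checking that the maps in the long exact sequences are literally the ones induced by these inclusions, so the functoriality step is legitimate. (Incidentally, applying the snake lemma to \eqref{eq:double_complex_ideals} produces a further short exact sequence $0\to\idealComplex^{\bsmoothr}/\idealComplex^{\bsmooth}\to\quotientComplex^{\bsmooth}\to\quotientComplex^{\bsmoothr}\to 0$ whose long exact sequence even forces $H_1(\quotientComplex^{\bsmoothr})=0$; this is stronger than what the statement requires and is not needed here.)
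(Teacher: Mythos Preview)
Your proof is correct and follows essentially the same route as the paper: both use the long exact sequence of $0\to\idealComplex^{\bsmooth}\to\idealComplex^{\bsmoothr}\to\idealComplex^{\bsmoothr}/\idealComplex^{\bsmooth}\to 0$ together with the isomorphism $H_0(\idealComplex^{\bsmooth})\isomorphic H_0(\constantComplex)$ from Theorem~\ref{thm:geom_indep_dim} to conclude $H_0(\idealComplex^{\bsmoothr})\isomorphic H_0(\constantComplex)$, and then the long exact sequence of $0\to\idealComplex^{\bsmoothr}\to\constantComplex\to\quotientComplex^{\bsmoothr}\to 0$ for the second claim. The only cosmetic difference is that the paper packages the first step as an application of the Five Lemma to the ladder of long exact sequences coming from \eqref{eq:double_complex_ideals}, whereas you unpack it by hand via the factorisation $j_*\circ\iota_*$; your version has the virtue of making explicit that the isomorphism of Theorem~\ref{thm:geom_indep_dim} is the map induced by the inclusion $\idealComplex^{\bsmooth}\hookrightarrow\constantComplex$, which is exactly what is needed for either argument to go through.
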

\begin{proof}
	The diagram in Equation \eqref{eq:double_complex_ideals} implies the following commuting diagram that connects the long exact sequence of homologies for the two exact sequences of complexes,
	\begin{equation}
	\begin{tikzcd}%[ampersand replacement=\&]
	\cdots \arrow[r] & H_1(\idealComplex^\bsmoothr/\idealComplex^\bsmooth) \arrow[r] \arrow[d]
	& H_0(\idealComplex^\bsmooth) \arrow[r] \arrow[d]
	& H_0(\idealComplex^\bsmoothr) \arrow[r] \arrow[d]
	& H_0(\idealComplex^\bsmoothr/\idealComplex^\bsmooth) \arrow[r] \arrow[d]
	& 0\\
	\cdots \arrow[r] & 0 \arrow[r] 
	& H_0(\constantComplex) \arrow[r]
	& H_0(\constantComplex) \arrow[r] 
	& 0 \arrow[r]
	& 0
	\end{tikzcd}\;.
	\end{equation}
	Then, from Theorem \ref{thm:geom_indep_dim}, we know that $H_1(\quotientComplex^\bsmooth) \isomorphic H_0(\idealComplex^\bsmooth)$.
	Then, by an application of the Five lemma \cite{hatcher2002algebraic,schenck2003computational}, we obtain
	\begin{equation}
		H_0(\idealComplex^{\bsmoothr}) \isomorphic H_0(\constantComplex)\;,
	\end{equation}
	and the first part of the claim follows.
	The second part of the claim follows upon considering the long exact sequence implied by the following short exact sequence of chain complexes,
	\begin{equation}
		\begin{tikzcd}
			\idealComplex^\bsmoothr \arrow[r] &
			\constantComplex \arrow[r] & 
			\quotientComplex^\bsmoothr\;.
		\end{tikzcd}
	\end{equation}
\end{proof}

By focusing on the simpler object $H_0(\idealComplex^{\bsmoothr} / \idealComplex^{\bsmooth})$, see Lemma \ref{lem:support} at the end of this section, the previous result helps identify when $H_0(\idealComplex^{\bsmoothr})$ will be isomorphic to $H_0(\constantComplex)$.
Using this, we can now present our main results: sufficient conditions for the \ginoun{} of $\quotientComplex^\bsmoothr$.
The following results use the following commuting diagram of complexes,
\begin{equation}
\begin{tikzcd}%[ampersand replacement=\&]
0 \arrow[r] & \idealComplex^{\bsmooth} \arrow[r]\arrow[d] & \constantComplex \arrow[r] \arrow[d]& \quotientComplex^{\bsmooth} \arrow[r] \arrow[d] & 0\\
0 \arrow[r] & \idealComplex^{\bsmoothr} \arrow[r] & \constantComplex \arrow[r] & \quotientComplex^{\bsmoothr} \arrow[r] & 0
\end{tikzcd}
\label{eq:double_complex_splines}
\end{equation}
which is built using the inclusion map $\constantComplex \rightarrow \constantComplex$.

\begin{proposition}\label{prop:quotient_homology}
	If $H_0(\idealComplex^{\bsmoothr} / \idealComplex^{\bsmooth}) = 0$ and $\quotientComplex^\bsmooth$ is \gi{}, then
	\begin{equation}
		H_1(\quotientComplex^{\bsmoothr}) = 0\;.
	\end{equation}
\end{proposition}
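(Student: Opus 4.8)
The plan is to apply the homology long exact sequence to the two rows of the commuting diagram \eqref{eq:double_complex_splines}, obtaining a commuting ladder, and then to run a short diagram chase that feeds the \ginoun{} of $\quotientComplex^\bsmooth$ and the conclusion of Proposition \ref{prop:ideal_homology} into the bottom row. A pleasant feature of the argument is that the homology of the constant complex $\constantComplex$ never has to be computed.

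Concretely, I would first record the long exact sequence of the bottom row,
\[
\cdots \to H_1(\constantComplex) \xrightarrow{\ b\ } H_1(\quotientComplex^{\bsmoothr}) \xrightarrow{\ \delta\ } H_0(\idealComplex^{\bsmoothr}) \xrightarrow{\ j\ } H_0(\constantComplex) \to H_0(\quotientComplex^{\bsmoothr}) \to 0,
\]
where $j$ is induced by the inclusion $\idealComplex^{\bsmoothr}\hookrightarrow\constantComplex$. Under the present hypotheses, Proposition \ref{prop:ideal_homology} says $j$ is an isomorphism, hence injective; so exactness at $H_0(\idealComplex^{\bsmoothr})$ gives $\operatorname{im}\delta=\ker j=0$, i.e.\ $\delta=0$, and then exactness at $H_1(\quotientComplex^{\bsmoothr})$ gives $H_1(\quotientComplex^{\bsmoothr})=\ker\delta=\operatorname{im} b$. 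It therefore remains only to show $b=0$.

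For that I would use the left-hand square of \eqref{eq:double_complex_splines}, built over the identity $\constantComplex\to\constantComplex$: applying homology in degree one yields a commuting square whose top arrow is $a\colon H_1(\constantComplex)\to H_1(\quotientComplex^{\bsmooth})$, bottom arrow is $b$, left arrow is $\operatorname{id}_{H_1(\constantComplex)}$, and right arrow is induced by $\quotientComplex^{\bsmooth}\to\quotientComplex^{\bsmoothr}$. Since $\quotientComplex^\bsmooth$ is \gi{} we have $H_1(\quotientComplex^\bsmooth)=0$, so $a=0$; commutativity then forces $b=0$, and hence $H_1(\quotientComplex^{\bsmoothr})=\operatorname{im} b=0$, as desired.

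The one delicate point — and essentially the only place the argument could go wrong — is the identification of the isomorphism produced by Proposition \ref{prop:ideal_homology} with the map $j$ in the bottom long exact sequence. This identification holds because that isomorphism was obtained by the Five lemma applied to the ladder \eqref{eq:double_complex_ideals}, whose pertinent vertical arrow is exactly $H_0$ of $\idealComplex^{\bsmoothr}\hookrightarrow\constantComplex$, which is the same inclusion defining $j$. Once this is in place, everything else is a routine two-step chase; note in particular that of the \ginoun{} of $\quotientComplex^\bsmooth$ only $H_1(\quotientComplex^\bsmooth)=0$ is used directly here, the vanishing of $H_0(\quotientComplex^\bsmooth)$ having already been invested in Proposition \ref{prop:ideal_homology}.
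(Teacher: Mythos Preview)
Your argument is correct and follows essentially the same approach as the paper: both exploit the ladder of long exact sequences arising from the diagram \eqref{eq:double_complex_splines}, feed in Proposition~\ref{prop:ideal_homology} and the vanishing of $H_1(\quotientComplex^\bsmooth)$, and conclude. The only cosmetic difference is that the paper packages the chase as an application of the Five lemma (yielding surjectivity of $H_1(\quotientComplex^\bsmooth)\to H_1(\quotientComplex^\bsmoothr)$), whereas you unwind the relevant portion of that chase by hand; your identification of the isomorphism from Proposition~\ref{prop:ideal_homology} with the map $j$ is exactly the point that makes the Five lemma (or your direct argument) go through.
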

\begin{proof}
	The diagram in Equation \eqref{eq:double_complex_splines} implies the following commuting diagram that connects the long exact sequence of homologies for the two exact sequences of complexes,
	\begin{equation}
	\begin{tikzcd}%[ampersand replacement=\&]
	\cdots \arrow[r]
	& H_1(\constantComplex) \arrow[r] \arrow[d]
	& H_1(\quotientComplex^\bsmooth) \arrow[r] \arrow[d]
	& H_0(\idealComplex^\bsmooth) \arrow[r] \arrow[d]
	& H_0(\constantComplex) \arrow[r] \arrow[d]
	& H_0(\quotientComplex^\bsmooth) \arrow[r] \arrow[d]
	& 0\\
	\cdots \arrow[r]
	& H_1(\constantComplex) \arrow[r]
	& H_1(\quotientComplex^\bsmoothr) \arrow[r]
	& H_0(\idealComplex^\bsmoothr) \arrow[r] 
	& H_0(\constantComplex) \arrow[r] 
	& H_0(\quotientComplex^\bsmoothr) \arrow[r] 
	& 0
	\end{tikzcd}\;.
	\end{equation}
	By an application of the Five lemma \cite{hatcher2002algebraic,schenck2003computational}, we obtain that the map $H_1(\quotientComplex^\bsmooth) \rightarrow H_1(\quotientComplex^\bsmoothr)$ must be a surjection.
	Then, the claim follows from the \ginoun{} of $\quotientComplex^\bsmooth$.
\end{proof}

\begin{corollary}\label{cor:new_dimension}
	If $H_0(\idealComplex^{\bsmoothr} / \idealComplex^{\bsmooth}) = 0$ and $\quotientComplex^\bsmooth$ is \gi{}, then $\quotientComplex^\bsmoothr$ is \gi{}.
\end{corollary}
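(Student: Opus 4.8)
The plan is simply to assemble the two preceding propositions, since together they already contain both halves of what is needed. By definition, $\quotientComplex^\bsmoothr$ is \gi{} precisely when $H_1(\quotientComplex^\bsmoothr) = 0$ and $H_0(\quotientComplex^\bsmoothr) = 0$, so it suffices to verify each of these vanishing statements separately under the stated hypotheses, namely $H_0(\idealComplex^{\bsmoothr}/\idealComplex^{\bsmooth}) = 0$ together with \ginoun{} of $\quotientComplex^\bsmooth$.

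First I would invoke Proposition \ref{prop:quotient_homology}: its hypotheses are literally those of the corollary, and its conclusion is $H_1(\quotientComplex^{\bsmoothr}) = 0$. Then I would invoke the second assertion of Proposition \ref{prop:ideal_homology}, again under the identical hypotheses, which yields $H_0(\quotientComplex^{\bsmoothr}) = 0$. Combining these two facts, the homologies of $\quotientComplex^\bsmoothr$ in positions one and zero both vanish, which is exactly the definition of \ginoun{}, and the corollary follows.

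There is essentially no obstacle: the substantive work — chasing the long exact sequences attached to the diagrams in Equations \eqref{eq:double_complex_ideals} and \eqref{eq:double_complex_splines} and applying the Five lemma — has already been carried out in establishing Propositions \ref{prop:ideal_homology} and \ref{prop:quotient_homology}, so this corollary is purely a matter of collating their conclusions. The only point worth flagging for the reader is that the common hypothesis $H_0(\idealComplex^{\bsmoothr}/\idealComplex^{\bsmooth}) = 0$ is subsequently reduced, via Lemma \ref{lem:support}, to a concrete and checkable combinatorial condition on the edges across which smoothness is being relaxed; that reduction is what turns this corollary into a practical tool in Section \ref{sec:examples}.
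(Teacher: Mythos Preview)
Your proposal is correct and matches the paper's approach exactly: the paper states the corollary without proof, treating it as immediate from Propositions~\ref{prop:ideal_homology} and~\ref{prop:quotient_homology}, which is precisely the combination you spell out.
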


As mentioned earlier, $H_0(\idealComplex^{\bsmoothr} / \idealComplex^{\bsmooth})$ is a simpler object to study, both computationally and analytically.
Let us precisely state what we mean by ``easier''.
Let ${\mesh}_1^\bsmoothr$ be the set of edges $\edge$ for which $\bsmoothr(\edge) < \bsmooth(\edge)$, and let ${\mesh}_0^\bsmoothr$ be the set of  vertices of the edges $\tau\in {\mesh}_1^\bsmoothr$ in $\meshInterior_0$.
Then, the following result follows: to study $H_0(\idealComplex^{\bsmoothr} / \idealComplex^{\bsmooth})$ we only need to focus on ${\mesh}_1^\bsmoothr$ and  ${\mesh}_0^\bsmoothr$.
\begin{lemma}\label{lem:support}
	The complex $\idealComplex^{\bsmoothr} / \idealComplex^{\bsmooth}$ is supported only on  ${\mesh}_1^\bsmoothr$ and  ${\mesh}_0^\bsmoothr$.
\end{lemma}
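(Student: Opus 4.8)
The plan is to compute the chain groups and the differential of $\idealComplex^{\bsmoothr}/\idealComplex^{\bsmooth}$ explicitly and then observe that every summand indexed by a cell outside ${\mesh}_1^{\bsmoothr}\cup{\mesh}_0^{\bsmoothr}$ is zero. First I would record that $\idealComplex^{\bsmooth}$ is genuinely a subcomplex of $\idealComplex^{\bsmoothr}$, so that the quotient makes sense termwise: since $\bsmoothr(\edge)\leq\bsmooth(\edge)$ for every edge, any polynomial multiple of $\ell_\edge^{\bsmooth(\edge)+1}$ is also a multiple of $\ell_\edge^{\bsmoothr(\edge)+1}$, whence $\ideal^{\bsmooth}_\edge\subseteq\ideal^{\bsmoothr}_\edge$ (both being $\PP_\edge$ when $\bsmooth(\edge)=-1$), and summing over the edges through a vertex gives $\ideal^{\bsmooth}_\vertex\subseteq\ideal^{\bsmoothr}_\vertex$. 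Because taking quotients commutes with direct sums, the complex $\idealComplex^{\bsmoothr}/\idealComplex^{\bsmooth}$ is
\[
\bigoplus_{\edge\in\meshInteriorE}\edgeE{}\left(\ideal^{\bsmoothr}_\edge/\ideal^{\bsmooth}_\edge\right)\ \longrightarrow\ \bigoplus_{\vertex\in\meshInteriorV}\vertexE{}\left(\ideal^{\bsmoothr}_\vertex/\ideal^{\bsmooth}_\vertex\right)\ \longrightarrow\ 0,
\]
with differential induced by that of $\constantComplex$.

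Next I would show that these summands vanish away from ${\mesh}_1^{\bsmoothr}$ and ${\mesh}_0^{\bsmoothr}$. For an interior edge $\edge\notin{\mesh}_1^{\bsmoothr}$ the definition of ${\mesh}_1^{\bsmoothr}$ gives $\bsmoothr(\edge)\geq\bsmooth(\edge)$, hence $\bsmoothr(\edge)=\bsmooth(\edge)$ and $\ideal^{\bsmoothr}_\edge=\ideal^{\bsmooth}_\edge$, so the corresponding degree-one summand is zero. For an interior vertex $\vertex\notin{\mesh}_0^{\bsmoothr}$, no interior edge through $\vertex$ lies in ${\mesh}_1^{\bsmoothr}$ (this is precisely the definition of ${\mesh}_0^{\bsmoothr}$), while any boundary edge through $\vertex$ also lies outside ${\mesh}_1^{\bsmoothr}$ because $\bsmooth$ and $\bsmoothr$ both equal $-1$ there; in either case every edge $\edge\ni\vertex$ entering the sums defining $\ideal^{\bsmooth}_\vertex$ and $\ideal^{\bsmoothr}_\vertex$ satisfies $\ideal^{\bsmoothr}_\edge=\ideal^{\bsmooth}_\edge$, so $\ideal^{\bsmoothr}_\vertex=\ideal^{\bsmooth}_\vertex$ and the corresponding degree-zero summand is zero.

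Finally I would check compatibility of the differential with this support. The differential of $\constantComplex$ restricts the summand attached to an edge $\edge$ only into the summands attached to the interior vertices of $\edge$, and the same therefore holds for the induced differential on $\idealComplex^{\bsmoothr}/\idealComplex^{\bsmooth}$. Since every interior vertex of an edge $\edge\in{\mesh}_1^{\bsmoothr}$ belongs to ${\mesh}_0^{\bsmoothr}$ by definition, the nonzero part of $\idealComplex^{\bsmoothr}/\idealComplex^{\bsmooth}$ is exactly
\[
\bigoplus_{\edge\in{\mesh}_1^{\bsmoothr}}\edgeE{}\left(\ideal^{\bsmoothr}_\edge/\ideal^{\bsmooth}_\edge\right)\ \longrightarrow\ \bigoplus_{\vertex\in{\mesh}_0^{\bsmoothr}}\vertexE{}\left(\ideal^{\bsmoothr}_\vertex/\ideal^{\bsmooth}_\vertex\right),
\]
which is the assertion that the complex is supported only on ${\mesh}_1^{\bsmoothr}$ and ${\mesh}_0^{\bsmoothr}$.

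There is no genuine obstacle here; the argument is pure bookkeeping with the definitions of $\ideal^{\bullet}_\edge$, $\ideal^{\bullet}_\vertex$, ${\mesh}_1^{\bsmoothr}$ and ${\mesh}_0^{\bsmoothr}$. The only place that needs a moment's care is an interior vertex that is an endpoint of a boundary edge, and that case is disposed of by the convention $\bsmooth(\edge)=\bsmoothr(\edge)=-1$ on non-interior edges.
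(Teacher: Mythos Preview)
Your proposal is correct and follows essentially the same approach as the paper's own proof, which simply observes that if $\bsmoothr(\edge)=\bsmooth(\edge)$ then $\ideal^{\bsmoothr}_\edge=\ideal^{\bsmooth}_\edge$ so the cokernel of the inclusion vanishes on $\edge$, and similarly for vertices. Your version is more detailed (explicitly verifying the subcomplex inclusion, handling boundary edges, and checking compatibility of the differential), but the underlying argument is the same bookkeeping from the definitions.
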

\begin{proof}
	The claim follows from the definition of the complexes $\idealComplex^{\bsmoothr}$ and $\idealComplex^{\bsmooth}$.
	Indeed, if $\bsmoothr(\edge) = \bsmooth(\edge)$, then $\ideal^{\bsmoothr}_\edge = \ideal^{\bsmooth}_\edge$ and the cokernel of the inclusion map from $\idealComplex^\bsmooth$ to $\idealComplex^\bsmoothr$ is zero on $\edge$; similarly for the vertices.
\end{proof}
	
	% examples
	\section{Applications}\label{sec:examples}

Let us now see how we can compute the dimension of splines on interesting meshes using Corollary \ref{cor:new_dimension}.
This latter result is quite general and is applicable in a large number of settings.
%Instead of spanning the breadth of its applications, 
We will narrow our focus down to the case where we reduce the smoothness across one or more interior edges $\edge$ from $\bsmooth(\edge)$ to $\bsmoothr(\edge) = -1$.  This is motivated as follows.
Let us say that we are working with splines on a polygonal partition of a topological disk $\Omega$.
Assume that Corollary \ref{cor:new_dimension} says that we can reduce the smoothness across any interior edge to $-1$.
Then, we can carve out arbitrary polygons from $\Omega$ to create new meshes of arbitrary topologies, and we can do so without losing the ability to exactly compute the dimension of splines on the new meshes.

\begin{definition}[Pruned mesh, spline space and gBSS complex]
	Given $\mesh$, $\bsmooth$ and $\mbf{\degreeu}$, let $F \subset \meshF$ be the set of faces such that, for any edge $\edge \subset \face \in F$, $\bsmooth(\edge) = -1$.
	\begin{itemize}
		\item A pruned mesh $\hat{\mesh}$ is obtained from $\mesh$ by deleting all faces in $F$.
		\item With $\hat{\bsmooth}$ and $\hat{\mbf{\degreeu}}$ defined by restricting $\bsmooth$ and $\mbf{\degreeu}$ to the edges and face of $\hat{\mesh}$, the spline space $\splSpace^{\hat{\bsmooth}}_{\hat{\mbf{\degreeu}}}(\hat{\mesh})$ will be called the pruned spline space.
		\item The pruned gBSS complex $\hat{\quotientComplex}^{\hat{\bsmooth}}$ is defined on $\hat{\mesh}$ using $\hat{\bsmooth}$ and $\hat{\mbf{\degreeu}}$.
		Equivalently, $\hat{\quotientComplex}^{\hat{\bsmooth}}$ can be obtained from ${\quotientComplex}^{{\bsmooth}}$ by deleting all faces $\face \in F$ from the top vector space.
	\end{itemize}
	
\end{definition}

Note that, in general, the domains corresponding to a mesh $\mesh$ and its corresponding pruned mesh $\hat{\mesh}$ will be topologically different.
The next three subsections show how the dimension of pruned spline spaces can be be computed for triangulations, polygonal meshes, and T-meshes.  We will use the following result in all sections.

\begin{theorem}\label{thm:hole_dimension}
	Let $\quotientComplex^\bsmooth$ be \gi{} for a given mesh $\mesh$, and let $\splSpace^{\hat{\bsmooth}}_{\hat{\mbf{\degreeu}}}(\hat{\mesh})$ and $\hat{\quotientComplex}^{\hat{\bsmooth}}$ be the corresponding pruned spline space and gBSS complex.
	Then,
	\begin{equation}
		\dimwp{\splSpace^{\hat{\bsmooth}}_{\hat{\mbf{\degreeu}}}(\hat{\mesh})}
		= \euler{\hat{\quotientComplex}^{\hat{\bsmooth}}}
		= \euler{\quotientComplex^\bsmooth} - \sum_{\face \in F} \dimwp{\PP_\face}\;.
	\end{equation}
\end{theorem}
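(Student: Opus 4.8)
The plan is to prove the two equalities in the statement separately; the right-hand one is purely formal while the left-hand one carries the content. For the right-hand equality $\euler{\hat{\quotientComplex}^{\hat{\bsmooth}}} = \euler{\quotientComplex^{\bsmooth}} - \sum_{\face \in F}\dimwp{\PP_{\face}}$, I would invoke the definition of the pruned complex directly: $\hat{\quotientComplex}^{\hat{\bsmooth}}$ is obtained from $\quotientComplex^{\bsmooth}$ by deleting the summands indexed by the faces $\face \in F$ from the top (position-two) term and leaving the position-one and position-zero terms unchanged. Since the Euler characteristic is the alternating sum of the dimensions of the three terms, it decreases by exactly $\sum_{\face \in F}\dimwp{\PP_{\face}}$. (That this pruning changes nothing in positions one and zero is consistent with the definition of $F$: every edge bounding a face of $F$ carries regularity $-1$, so its quotient $\PP_{\edge}/\ideal^{\bsmooth}_{\edge}$ already vanishes in $\quotientComplex^{\bsmooth}$, and every vertex surrounded only by faces of $F$ has $\ideal^{\bsmooth}_{\vertex} = \PP_{\vertex}$.)

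For the left-hand equality $\dimwp{\splSpace^{\hat{\bsmooth}}_{\hat{\mbf{\degreeu}}}(\hat{\mesh})} = \euler{\quotientComplex^{\bsmooth}} - \sum_{\face \in F}\dimwp{\PP_{\face}}$, I would first decouple $\splSpace^{\bsmooth}$ along the faces in $F$. By the definition of $F$, every edge $\edge$ bounding a face of $F$ satisfies $\bsmooth(\edge) = -1$, so by Lemma \ref{lem:smoothness} the divisibility condition $\ell_{\edge}^{\bsmooth(\edge)+1} \mid f_{\face} - f_{\face'}$ imposes no constraint across such an edge. Hence the only non-vacuous constraints defining $\splSpace^{\bsmooth}$ are those across interior edges of $\mesh$ shared by two faces of $\meshF \setminus F$; these edges, together with their linear forms, their regularities, and the polynomial spaces $\PP_{\face}$ attached to the adjacent faces, are precisely the interior edges and the associated data of $\hat{\mesh}$ (because $\hat{\mbf{\degreeu}}$ and $\hat{\bsmooth}$ are the restrictions of $\mbf{\degreeu}$ and $\bsmooth$). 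Consequently the map $f \mapsto \bigl( (f_{\face})_{\face \in F}, (f_{\face})_{\face \notin F} \bigr)$ is a linear isomorphism
\[
\splSpace^{\bsmooth} \isomorphic \Bigl( \bigoplus_{\face \in F} \PP_{\face} \Bigr) \oplus \splSpace^{\hat{\bsmooth}}_{\hat{\mbf{\degreeu}}}(\hat{\mesh})\;,
\]
because the restrictions to the faces in $F$ are entirely free while the restrictions to the remaining faces range over precisely the pruned spline space. Taking dimensions gives $\dimwp{\splSpace^{\bsmooth}} = \sum_{\face \in F}\dimwp{\PP_{\face}} + \dimwp{\splSpace^{\hat{\bsmooth}}_{\hat{\mbf{\degreeu}}}(\hat{\mesh})}$. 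Now the hypothesis that $\quotientComplex^{\bsmooth}$ is \gi{}, together with Theorem \ref{thm:geom_indep_dim}, gives $\dimwp{\splSpace^{\bsmooth}} = \euler{\quotientComplex^{\bsmooth}}$; substituting this into the previous identity and rearranging yields the left-hand equality. Note that this route never requires $\hat{\quotientComplex}^{\hat{\bsmooth}}$ itself to be \gi{}: we equate $\dimwp{\splSpace^{\hat{\bsmooth}}_{\hat{\mbf{\degreeu}}}(\hat{\mesh})}$ with $\euler{\hat{\quotientComplex}^{\hat{\bsmooth}}}$ by computing the two sides independently and matching them.

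I expect the only delicate step to be the decoupling isomorphism, and specifically the claim that the block of $\splSpace^{\bsmooth}$ indexed by $\meshF \setminus F$ is \emph{literally} $\splSpace^{\hat{\bsmooth}}_{\hat{\mbf{\degreeu}}}(\hat{\mesh})$ rather than merely a vector space of the same dimension. This comes down to checking that each interior edge of $\hat{\mesh}$ retains the linear form, the regularity, and the polynomial spaces on its two adjacent faces that it carried in $\mesh$ — which is immediate once one unwinds the definitions of $\hat{\mbf{\degreeu}}$ and $\hat{\bsmooth}$ as restrictions — and that an interior edge of $\mesh$ incident to a face of $F$ plays no role, which is Lemma \ref{lem:smoothness} again. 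Everything else is bookkeeping.
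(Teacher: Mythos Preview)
The paper states Theorem~\ref{thm:hole_dimension} without proof, so there is no argument to compare against line by line. Your proof is correct and supplies the details the paper leaves implicit. The right-hand equality follows immediately from the paper's own characterization of $\hat{\quotientComplex}^{\hat{\bsmooth}}$ as ``$\quotientComplex^{\bsmooth}$ with the summands $\PP_\face$, $\face\in F$, deleted from position two''; your parenthetical explaining why positions one and zero are unaffected is the right justification for that characterization. The decoupling isomorphism $\splSpace^{\bsmooth}\cong\bigl(\bigoplus_{\face\in F}\PP_\face\bigr)\oplus\splSpace^{\hat{\bsmooth}}_{\hat{\mbf{\degreeu}}}(\hat{\mesh})$ is established cleanly, and combining it with Theorem~\ref{thm:geom_indep_dim} for the unpruned complex gives the left-hand equality.

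One remark on strategy. You explicitly note that your route avoids showing $\hat{\quotientComplex}^{\hat{\bsmooth}}$ is itself \gi{}. In fact this \emph{does} hold, and is arguably the proof the paper has in mind given its emphasis on \ginoun{}: since every edge bounding a face $\face\in F$ has $\bsmooth(\edge)=-1$, the boundary map $\overline{\partial}$ vanishes identically on $\bigoplus_{\face\in F}\faceE{}\PP_\face$, so deleting these summands changes neither the image of $\overline{\partial}$ nor the lower two terms, hence $H_1(\hat{\quotientComplex}^{\hat{\bsmooth}})=H_1(\quotientComplex^{\bsmooth})=0$ and $H_0(\hat{\quotientComplex}^{\hat{\bsmooth}})=H_0(\quotientComplex^{\bsmooth})=0$. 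One can then apply Theorem~\ref{thm:geom_indep_dim} directly to $\hat{\quotientComplex}^{\hat{\bsmooth}}$. Your approach trades this short homological observation for the explicit spline-level decoupling; both are valid, and yours has the virtue of making the relationship between $\splSpace^{\bsmooth}$ and the pruned spline space completely transparent.
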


\subsection{Triangulations}\label{ssec:triangulations}
Let us look at splines on given triangulations $\mesh$.
In the interest of a focused discussion, we will consider the following special case as our starting point,
\begin{equation}
\begin{split}
	&\PP_\face = \PP_\degreeu\;,\;\;\forall \face \in \meshF\;,\\
	&\bsmooth(\edge) \in \{\smooth, -1\}\;,\;\;\forall \edge \in \meshInteriorE\;,
\end{split}
\end{equation}
where $\degreeu \in \ZZP$, $\smooth \in \ZZ_{\geq -1}$, and $\PP_\degreeu$ is the space of polynomials of total degree at most $\degreeu$.

\begin{lemma}\label{lem:tri_edge}
	Let $\quotientComplex^\bsmooth$ be \gi{}, and consider an interior edge $\edge$ with end-points $\vertex$ and $\vertex'$ such that $\bsmooth(\edge) = \smooth$.
	If $\edge'$ is incident on $\vertex$ and $\bsmooth(\edge') = -1$, then $\quotientComplex^\bsmoothr$ is also \gi{}, where
	\begin{equation}
		\bsmoothr(\edge'') :=
		\begin{dcases}
			\bsmooth(\edge'')\;, & \edge'' \neq \edge\;,\\
			-1\;, & \text{otherwise}.
		\end{dcases}
	\end{equation}
\end{lemma}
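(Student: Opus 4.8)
The plan is to apply Corollary \ref{cor:new_dimension}: since $\bsmoothr$ differs from $\bsmooth$ only on the single edge $\edge$ (where it drops from $\smooth$ to $-1$), it suffices to verify the hypothesis $H_0(\idealComplex^{\bsmoothr}/\idealComplex^{\bsmooth}) = 0$. By Lemma \ref{lem:support}, the quotient complex $\idealComplex^{\bsmoothr}/\idealComplex^{\bsmooth}$ is supported only on ${\mesh}_1^\bsmoothr = \{\edge\}$ and ${\mesh}_0^\bsmoothr \subseteq \{\vertex, \vertex'\}$. So the complex in question has the very simple form
\begin{equation}
	0 \rightarrow \edgeE{} \left( \PP_\degreeu / \ideal^{\bsmooth}_\edge \right) \xrightarrow{\ \partial\ } \bigoplus_{\vertex'' \in {\mesh}_0^\bsmoothr} \vertexE{''} \left( \ideal^{\bsmoothr}_{\vertex''} / \ideal^{\bsmooth}_{\vertex''} \right) \rightarrow 0\;,
\end{equation}
where I have used $\ideal^{\bsmoothr}_\edge = \PP_\degreeu$ (since $\bsmoothr(\edge)=-1$) and the fact that for $\edge'' \neq \edge$ one has $\ideal^{\bsmoothr}_{\edge''} = \ideal^{\bsmooth}_{\edge''}$. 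Computing $H_0$ amounts to showing this differential is surjective onto the vertex term.

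The key observation is the hypothesis that $\edge'$ is incident on $\vertex$ with $\bsmooth(\edge') = -1$, i.e., $\ideal^{\bsmooth}_{\edge'} = \PP_\degreeu$. First I would analyze the vertex $\vertex$: since $\ideal^{\bsmoothr}_\vertex = \sum_{\edge'' \ni \vertex} \ideal^{\bsmoothr}_{\edge''}$ already contains $\ideal^{\bsmoothr}_\edge = \PP_\degreeu$, we get $\ideal^{\bsmoothr}_\vertex = \PP_\degreeu$, so the summand at $\vertex$ is $\PP_\degreeu/\ideal^{\bsmooth}_\vertex$. But $\ideal^{\bsmooth}_\vertex$ already contains $\ideal^{\bsmooth}_{\edge'} = \PP_\degreeu$, hence $\ideal^{\bsmooth}_\vertex = \PP_\degreeu$ as well, and the summand at $\vertex$ \emph{vanishes}. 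Thus $\vertex \notin {\mesh}_0^\bsmoothr$ effectively, and the vertex term reduces to the single summand at $\vertex'$ (or is empty, if $\vertex'$ is a boundary vertex, in which case $H_0 = 0$ trivially and we are done). For the remaining case I would examine the vertex $\vertex'$: here $\ideal^{\bsmoothr}_{\vertex'} = \ideal^{\bsmooth}_{\vertex'} + \ideal^{\bsmoothr}_\edge = \ideal^{\bsmooth}_{\vertex'} + \PP_\degreeu = \PP_\degreeu$, so the target summand is $\PP_\degreeu/\ideal^{\bsmooth}_{\vertex'}$, and the differential $\partial$ from $\PP_\degreeu/\ideal^{\bsmooth}_\edge$ sends a class to (plus or minus) its image in $\PP_\degreeu/\ideal^{\bsmooth}_{\vertex'}$. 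Since $\ideal^{\bsmooth}_\edge \subseteq \ideal^{\bsmooth}_{\vertex'}$, this quotient-of-quotients map is surjective, so $H_0(\idealComplex^{\bsmoothr}/\idealComplex^{\bsmooth}) = 0$, and Corollary \ref{cor:new_dimension} finishes the proof.

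The main subtlety — and the step I would be most careful about — is bookkeeping the boundary-versus-interior status of $\vertex$ and $\vertex'$, and making sure the sign conventions and orientation maps $\krond{\edge}{\vertex''}$ in the differential $\partial$ are handled correctly so that surjectivity onto $\PP_\degreeu/\ideal^{\bsmooth}_{\vertex'}$ really does hold (it does, since $\partial$ on this one-edge complex is, up to a nonzero scalar, the canonical surjection between nested quotients). One should also double-check the degenerate configurations: if both $\vertex$ and $\vertex'$ fail to lie in $\meshInteriorV$, or if $\edge'$ happens to also be incident on $\vertex'$, the argument only gets easier. No delicate homological machinery beyond Corollary \ref{cor:new_dimension} and Lemma \ref{lem:support} is needed; the content is entirely in the local ideal computation at the two endpoints, driven by the presence of the ``free'' edge $\edge'$ with $\bsmooth(\edge') = -1$.
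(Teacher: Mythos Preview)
Your proposal is correct and follows essentially the same route as the paper's proof: identify the support of $\idealComplex^{\bsmoothr}/\idealComplex^{\bsmooth}$ via Lemma~\ref{lem:support}, observe that the summand at $\vertex$ vanishes because $\ideal^{\bsmooth}_{\vertex}$ already equals $\PP_\degreeu$ (thanks to the edge $\edge'$ with $\bsmooth(\edge')=-1$), check that the remaining map onto the $\vertex'$-summand is the canonical surjection between nested quotients, and conclude by Corollary~\ref{cor:new_dimension}. The paper's proof says exactly this, only more tersely (``it is easy to verify''); your write-up simply unpacks the verification and is in fact more careful about the boundary/degenerate cases.
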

\begin{proof}
	From Lemma \ref{lem:smoothness}, $\idealComplex^\bsmoothr/\idealComplex^\bsmooth$ is supported on
	\begin{itemize}
		\item $\edge$ and $\vertex'$: if $\bsmooth(\edge'') \neq -1$ for any edge $\edge''$ incident on $\vertex'$,
		\item $\edge$: otherwise.
	\end{itemize}
	Then, for both these cases, it is easy to verify that $H_0(\idealComplex^\bsmoothr/\idealComplex^\bsmooth)$ vanishes and the claim follows from Corollary \ref{cor:new_dimension}.
\end{proof}

%\subsubsection{Triangulations with holes}
Lemma~\ref{lem:tri_edge} tells us that if an edge $\edge$ is incident on a vertex $\vertex$ such that $\bsmooth(\edge) = -1$, then for any other edge $\edge'$ that is incident on $\vertex$, we can reduce the smoothness across $\edge'$ to $-1$.
On the other hand, we now consider what happens when we reduce the smoothness across the entire boundary of a face $\face \in \meshF$ to $-1$.

A dimension formula for splines on planar triangulations with holes is given in~\cite{alfeld_holes_1987} for $m\ge 4r+1$.  In the remarks at the end of~\cite{alfeld1990dimension} it is indicated how this dimension formula can be extended to $m\ge 3r+1$.  In this section we show that homological techniques (Theorem~\ref{thm:hole_dimension} and Corollary~\ref{cor:new_dimension}) can be used to derive the dimension formula for splines on triangulations with holes (and $m\ge 3r+1$) from the dimension formula on triangulations without holes.

%%Mike edit 11/24 and 11/25
The result depends on describing $H_0(\idealComplex^\bsmoothr/\idealComplex^\bsmooth)$, which is the cokernel of the only non-trivial map in the chain complex $\idealComplex^\bsmoothr/\idealComplex^\bsmooth$; we call this map $\phi$.  Suppose $\face$ is bounded by edges $\edge_1$, $\edge_2$ and $\edge_3$.  Let $\vertex_i = \edge_i \cap \edge_{i+1}$ be interior vertices of $\mesh$, where the index $i$ is cyclic in $(1, 2, 3)$.  Let $\ell_i$ be a linear form vanishing along $\edge_i$ for $i=1,2,3$.  Then $\phi$ is the map
\begin{equation}
\begin{tikzcd}[ampersand replacement=\&,column sep=huge]
	\phi~:~\begin{array}{c}
		\PP_\degreeu / \langle \ell_1^{\smooth+1} \rangle\\
		\moplus\\
		\PP_\degreeu / \langle \ell_2^{\smooth+1} \rangle\\
		\moplus\\
		\PP_\degreeu / \langle \ell_3^{\smooth+1} \rangle
	\end{array}\quad
	\arrow{r}{\begin{bmatrix}
		-1 & 1 & 0\\
		0 & -1 & 1\\
		1 & 0 & -1
	\end{bmatrix}} \&
	\quad
	\begin{array}{c}
	\PP_\degreeu / \ideal^\bsmooth_{\vertex_1}\\
	\moplus\\
	\PP_\degreeu / \ideal^\bsmooth_{\vertex_2}\\
	\moplus\\
	\PP_\degreeu / \ideal^\bsmooth_{\vertex_3}
	\end{array}\;.
\end{tikzcd}
\label{eq:tri_hole}
\end{equation}
%Describe homogenization carefully in preliminaries - this is what is used here
Here
%, $\overline{\PP}_\degreeu$ is the space of polynomials in variables $x, y, z$ of total degree exactly $\degreeu$, 
$\langle \ell_i^{r+1} \rangle$ is the subspace of $\PP_\degreeu$ containing all polynomial multiples of $\ell_i^{r+1}$ (for $i=1,2,3$).
%, and 
%\begin{equation}
%	\langle p_1, p_2, \cdots, p_j \rangle := \sum_{i = 1}^j \langle p_i \rangle\;.
%\end{equation}
%\begin{remark}
%The cokernel of the map $\phi$ in Equation~\eqref{eq:tri_hole} is precisely $H_0(\idealComplex^\bsmoothr/\idealComplex^\bsmooth)$.
%\end{remark}

\begin{lemma}\label{lem:tri_hole}
	Let $\quotientComplex^\bsmooth$ be \gi{} and consider a face $\face$ bounded by edges $\edge_1$, $\edge_2$ and $\edge_3$.
	Let $\vertex_i = \edge_i \cap \edge_{i+1}$ be interior vertices of $\mesh$, where the index $i$ is cyclic in $(1, 2, 3)$.
	Assume that the following hold for all $i \in \{1, 2, 3\}$,
	\begin{itemize}
		\item $\bsmooth(\edge_i) = \smooth$,
		\item $\bsmooth(\edge) \neq -1$ for any edge $\edge$ incident on $\vertex_i$,
		%\item the number of edges with distinct slopes meeting at $\vertex_i$ is greater than $\smooth+1$.
	\end{itemize}
	If the cokernel of the map $\phi$ from Equation \eqref{eq:tri_hole} is trivial, then $\quotientComplex^\bsmoothr$ is \gi{}, where
	\begin{equation}
	\bsmoothr(\edge) :=
	\begin{dcases}
	\bsmooth(\edge)\;, & \edge \neq \edge_i\;, i = 1, 2, 3\;,\\
	-1\;, & \text{otherwise}.
	\end{dcases}
	\end{equation}
\end{lemma}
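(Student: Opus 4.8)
The plan is to verify the single hypothesis of Corollary~\ref{cor:new_dimension}, namely that $H_0(\idealComplex^{\bsmoothr}/\idealComplex^{\bsmooth}) = 0$; the \ginoun{} of $\quotientComplex^{\bsmoothr}$ then follows immediately since $\quotientComplex^{\bsmooth}$ is assumed \gi{}. First I would apply Lemma~\ref{lem:support}: the complex $\idealComplex^{\bsmoothr}/\idealComplex^{\bsmooth}$ is supported only on ${\mesh}_1^{\bsmoothr} = \{\edge_1, \edge_2, \edge_3\}$ and on ${\mesh}_0^{\bsmoothr}$. Because $\face$ is a triangle, the endpoints of $\edge_1, \edge_2, \edge_3$ are exactly $\vertex_1, \vertex_2, \vertex_3$, all of which are interior by hypothesis, so ${\mesh}_0^{\bsmoothr} = \{\vertex_1, \vertex_2, \vertex_3\}$. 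Hence $\idealComplex^{\bsmoothr}/\idealComplex^{\bsmooth}$ is a two-term complex concentrated in homological degrees $1$ and $0$ on these three edges and three vertices, and $H_0(\idealComplex^{\bsmoothr}/\idealComplex^{\bsmooth})$ is the cokernel of its unique nonzero differential.

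Next I would identify that differential with the map $\phi$ of Equation~\eqref{eq:tri_hole}. Since $\PP_\face = \PP_\degreeu$ for all faces we have $\PP_{\edge_i} = \PP_\degreeu$, and $\bsmoothr(\edge_i) = -1$ gives $\ideal^{\bsmoothr}_{\edge_i} = \PP_\degreeu$, so the degree-$1$ term at $\edge_i$ is $\ideal^{\bsmoothr}_{\edge_i}/\ideal^{\bsmooth}_{\edge_i} = \PP_\degreeu/\langle \ell_i^{\smooth+1}\rangle$; here the hypothesis $\bsmooth(\edge_i) = \smooth$ is what makes $\ideal^{\bsmooth}_{\edge_i} = \langle \ell_i^{\smooth+1}\rangle$. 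Since $\vertex_i$ is an endpoint of $\edge_i$, we have $\ideal^{\bsmoothr}_{\vertex_i} \supseteq \ideal^{\bsmoothr}_{\edge_i} = \PP_\degreeu$, so the degree-$0$ term at $\vertex_i$ is $\ideal^{\bsmoothr}_{\vertex_i}/\ideal^{\bsmooth}_{\vertex_i} = \PP_\degreeu/\ideal^{\bsmooth}_{\vertex_i}$, where the assumption that no edge incident on $\vertex_i$ has $\bsmooth = -1$ ensures that $\ideal^{\bsmooth}_{\vertex_i}$ is exactly the space appearing in \eqref{eq:tri_hole}. The induced differential is the cellular boundary map restricted to these cells; its incidence pattern is that of the boundary of a triangle, and each nonzero entry is $\pm$ the natural surjection $\PP_\degreeu/\langle \ell_j^{\smooth+1}\rangle \to \PP_\degreeu/\ideal^{\bsmooth}_{\vertex_i}$, which is well defined precisely because $\langle \ell_j^{\smooth+1}\rangle = \ideal^{\bsmooth}_{\edge_j} \subseteq \ideal^{\bsmooth}_{\vertex_i}$ whenever $\edge_j$ meets $\vertex_i$. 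After fixing orientations this is exactly $\phi$, so $H_0(\idealComplex^{\bsmoothr}/\idealComplex^{\bsmooth}) \isomorphic \cokerwp{\phi}$.

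By assumption $\cokerwp{\phi} = 0$, hence $H_0(\idealComplex^{\bsmoothr}/\idealComplex^{\bsmooth}) = 0$, and Corollary~\ref{cor:new_dimension} gives that $\quotientComplex^{\bsmoothr}$ is \gi{}, which completes the proof.

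The only delicate point is the combinatorial bookkeeping in the first two steps: one must confirm that the support of $\idealComplex^{\bsmoothr}/\idealComplex^{\bsmooth}$ involves no cell outside $\{\edge_1, \edge_2, \edge_3\}$ and $\{\vertex_1, \vertex_2, \vertex_3\}$ — which is exactly where the three hypotheses (that $\face$ is a triangle, that $\bsmooth(\edge_i) = \smooth$, and that every edge through each $\vertex_i$ has $\bsmooth \neq -1$) are consumed — and confirm that the restricted differential coincides on the nose with $\phi$ rather than merely with a map having the same cokernel up to sign changes (which it does, since relabelling generators by signs is a complex isomorphism). Neither step requires real work once the supports are pinned down; all the genuine content has been pushed into the cokernel hypothesis, which is then analysed case-by-case in the applications that follow.
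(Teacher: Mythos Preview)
Your proof is correct and follows exactly the approach of the paper, which records only the one-line observation that $H_0(\idealComplex^{\bsmoothr}/\idealComplex^{\bsmooth})$ is precisely $\cokerwp{\phi}$ and then invokes Corollary~\ref{cor:new_dimension}. You have simply unpacked the identification of the support and of the induced differential in more detail than the paper does.
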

\begin{proof}
The is immediate from Corollary~\ref{cor:new_dimension} and the fact that $H_0(\idealComplex^\bsmoothr/\idealComplex^\bsmooth)$ is precisely the cokernel of the map $\phi$ from Equation \eqref{eq:tri_hole}.
\end{proof}

\begin{lemma}\label{lem:coker_description}
The cokernel of the map $\phi$ in Equation~\eqref{eq:tri_hole} (and hence $H_0(\idealComplex^\bsmoothr/\idealComplex^\bsmooth)$) is isomorphic to
\[
\PP_\degreeu / (\ideal^\bsmooth_{\vertex_1}+\ideal^\bsmooth_{\vertex_2}+\ideal^\bsmooth_{\vertex_3}).
\]
\end{lemma}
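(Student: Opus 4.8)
The plan is to realize $\coker\phi$ explicitly as $\PP_\degreeu/(\ideal^\bsmooth_{\vertex_1}+\ideal^\bsmooth_{\vertex_2}+\ideal^\bsmooth_{\vertex_3})$ by means of the summation map. First I would introduce
\[
\psi\colon \PP_\degreeu/\ideal^\bsmooth_{\vertex_1} \moplus \PP_\degreeu/\ideal^\bsmooth_{\vertex_2} \moplus \PP_\degreeu/\ideal^\bsmooth_{\vertex_3} \longrightarrow \PP_\degreeu/(\ideal^\bsmooth_{\vertex_1}+\ideal^\bsmooth_{\vertex_2}+\ideal^\bsmooth_{\vertex_3}), \qquad (f_1,f_2,f_3)\longmapsto f_1+f_2+f_3,
\]
which is well defined because each $\ideal^\bsmooth_{\vertex_i}$ lies inside the sum, and which is surjective (take $f_2=f_3=0$). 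The first step is to check that $\psi$ annihilates $\im\phi$: reading off the columns of the matrix in Equation~\eqref{eq:tri_hole}, a class $g$ placed in the $i$-th summand of the source maps to a triple of the form $(-g,0,g)$, $(g,-g,0)$, or $(0,g,-g)$, each of which is sent to $0$ by $\psi$. Hence $\psi$ descends to a surjection $\overline\psi\colon \coker\phi \to \PP_\degreeu/(\ideal^\bsmooth_{\vertex_1}+\ideal^\bsmooth_{\vertex_2}+\ideal^\bsmooth_{\vertex_3})$.

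The core of the argument is the injectivity of $\overline\psi$, i.e.\ the claim that a triple $(f_1,f_2,f_3)$ with $f_1+f_2+f_3 \in \ideal^\bsmooth_{\vertex_1}+\ideal^\bsmooth_{\vertex_2}+\ideal^\bsmooth_{\vertex_3}$ already lies in $\im\phi$. I would first reduce to the case $f_1+f_2+f_3=0$ in $\PP_\degreeu$: writing $f_1+f_2+f_3 = h_1+h_2+h_3$ with $h_j\in\ideal^\bsmooth_{\vertex_j}$ and replacing each $f_j$ by $g_j:=f_j-h_j$ leaves the triple unchanged in $\PP_\degreeu/\ideal^\bsmooth_{\vertex_1} \moplus \PP_\degreeu/\ideal^\bsmooth_{\vertex_2} \moplus \PP_\degreeu/\ideal^\bsmooth_{\vertex_3}$, while now $g_1+g_2+g_3=0$ as polynomials. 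Given this, I would exhibit an explicit preimage under $\phi$: take $a_1$ to be the zero class, $a_2$ the class of $g_1$, and $a_3$ the class of $g_1+g_2$, in $\PP_\degreeu/\langle\ell_1^{\smooth+1}\rangle$, $\PP_\degreeu/\langle\ell_2^{\smooth+1}\rangle$, $\PP_\degreeu/\langle\ell_3^{\smooth+1}\rangle$ respectively. Pushing these lifts through the boundary matrix of $\phi$ produces, in the three target components, the classes of $g_1$, of $-g_1+(g_1+g_2)=g_2$, and of $-(g_1+g_2)=g_3$; that is, $\phi(a_1,a_2,a_3)=(g_1,g_2,g_3)=(f_1,f_2,f_3)$. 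Therefore the triple lies in $\im\phi$, so $\overline\psi$ is injective and hence an isomorphism. Since $\coker\phi$ is, by the construction preceding Equation~\eqref{eq:tri_hole} together with Lemma~\ref{lem:support}, exactly $H_0(\idealComplex^\bsmoothr/\idealComplex^\bsmooth)$, the stated identification follows.

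I do not anticipate a genuine obstacle here. The only points that need care are the cyclic indexing $\vertex_i=\edge_i\cap\edge_{i+1}$ and the signs of the matrix entries: one must confirm that the three maps $\PP_\degreeu/\langle\ell_i^{\smooth+1}\rangle \to \PP_\degreeu/\ideal^\bsmooth_{\vertex_j}$ making up $\phi$ are well defined --- which holds precisely because $\langle\ell_i^{\smooth+1}\rangle = \ideal^\bsmooth_{\edge_i}\subseteq\ideal^\bsmooth_{\vertex_j}$ whenever $\vertex_j$ is an endpoint of $\edge_i$ --- and one must check that the explicit lifts chosen above land, after applying the boundary matrix, in the intended components with the intended signs. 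Both verifications are mechanical once the orientation conventions are fixed.
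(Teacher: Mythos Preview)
Your proof is correct. It differs from the paper's in technique: the paper sets up a three-row commutative diagram with the maps $\phi'',\phi',\phi$ (on the ideal, polynomial, and quotient levels respectively), applies the snake lemma to identify $\coker\phi$ with the cokernel of the induced map $\overline{\iota}\colon\coker\phi''\to\coker\phi'\cong\PP_\degreeu$, and then reads off that the image of $\overline{\iota}$ is $\ideal^\bsmooth_{\vertex_1}+\ideal^\bsmooth_{\vertex_2}+\ideal^\bsmooth_{\vertex_3}$. You bypass the diagrammatic machinery entirely by writing down the summation map $\psi$ and exhibiting an explicit preimage $(0,g_1,g_1+g_2)$ for any triple summing to zero. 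Your approach is more elementary and self-contained; the paper's snake-lemma argument is more structural and generalizes mechanically to the $k$-gon case in Lemma~\ref{lem:polygonalcokernel} without rewriting the explicit lifts (though your argument also extends easily, taking $a_j$ to be the class of $g_1+\cdots+g_{j-1}$).
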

\begin{proof}
We apply the snake lemma to the left two columns of the following commutative diagram:
\[
\begin{tikzcd}[ampersand replacement=\&,column sep=huge]
0\arrow[d] \& 0\arrow[d] \\
\bigoplus\limits_{i=1}^3 \langle \ell_i^{\smooth+1} \rangle \arrow{r}{\phi^{''}}\arrow[d] \& \bigoplus\limits_{i=1}^3 \ideal^\bsmooth_{\vertex_i} \arrow{d}{\iota}\arrow[r] \& \mbox{coker}(\phi^{''})\arrow{d}{\overline{\iota}}\arrow[r]\& 0\\
\bigoplus\limits_{i=1}^3 \PP_\degreeu \arrow{r}{\phi^{'}=\begin{bmatrix}
	-1 & 1 & 0\\
	0 & -1 & 1\\
	1 & 0 & -1
	\end{bmatrix}} \arrow[d] \&\bigoplus\limits_{i=1}^3 \PP_\degreeu \arrow{d}{\pi}\arrow{r}{\begin{bmatrix}
	1 & 1 & 1
	\end{bmatrix}} \&  \PP_{\degreeu}\arrow{d}{\overline{\pi}}\arrow[r]\& 0 \\ 
\bigoplus\limits_{i=1}^3 \dfrac{\PP_\degreeu}{\langle \ell_i^{\smooth+1} \rangle} \arrow{r}{\phi}\arrow[d] \& \bigoplus\limits_{i=1}^3 \dfrac{\PP_\degreeu}{\ideal^\bsmooth_{\vertex_i}}\arrow[r]\arrow[d]\& \mbox{coker}(\phi)\arrow[d] \arrow[r]\& 0\\
0 \& 0 \& 0
\end{tikzcd}
\]
%\[
%\begin{tikzcd}[ampersand replacement=\&,column sep=huge]
%0\arrow[d] \& 0\arrow[d]\\
%\bigoplus\limits_{i=1}^3 \langle \ell_i^{\smooth+1} \rangle \arrow{r}{\phi^{''}}\arrow[d] \& \bigoplus\limits_{i=1}^3 \ideal^\bsmooth_{\vertex_i} \arrow{d}{\iota} \\
%\bigoplus\limits_{i=1}^3 \PP_\degreeu \arrow{r}{\phi^{'}=\begin{bmatrix}
%	-1 & 1 & 0\\
%	0 & -1 & 1\\
%	1 & 0 & -1
%	\end{bmatrix}} \arrow[d] \&\bigoplus\limits_{i=1}^3 \PP_\degreeu \arrow{d}{\pi}\\
%\bigoplus\limits_{i=1}^3 \dfrac{\PP_\degreeu}{\langle \ell_i^{\smooth+1} \rangle} \arrow{r}{\phi}\arrow[d] \& \bigoplus\limits_{i=1}^3 \dfrac{\PP_\degreeu}{\ideal^\bsmooth_{\vertex_i}}\arrow[d]\\
%0 \& 0
%\end{tikzcd}
%\]
We only need the last portion of the snake lemma, namely the rightmost vertical column given by the sequence
\[
\mbox{coker}{\phi^{''}}\xrightarrow{\overline{\iota}} \mbox{coker}{\phi^{'}}\xrightarrow{\overline{\pi}} \mbox{coker}(\phi)\rightarrow 0,
\]
where $\overline{\iota}$ and $\overline{\pi}$ are the induced maps from $\iota$ and $\pi$.  In other words, $\mbox{coker}(\phi)\cong \mbox{coker}(\overline{\iota})$.  The above diagram explicitly identifies $\mbox{coker}{\phi^{'}}$ with $\PP_\degreeu$.  Since the diagram is commutative, $\overline{\iota}\cong \begin{bmatrix} 1 & 1 & 1\end{bmatrix}\circ\iota$.  The image of the latter inside $\PP_\degreeu$ is clearly $\ideal^\bsmooth_{\vertex_0}+\ideal^\bsmooth_{\vertex_1}+\ideal^\bsmooth_{\vertex_2}$.  Thus
\[
\mbox{coker}(\phi)\cong\mbox{coker}(\overline{\iota})\cong\PP_\degreeu / (\ideal^\bsmooth_{\vertex_0}+\ideal^\bsmooth_{\vertex_1}+\ideal^\bsmooth_{\vertex_2}),
\]
as claimed.
\end{proof}

\begin{remark}
	In the remainder of the manuscript, if $f_1,\ldots,f_k$ are polynomials in $\PP_\degreeu$ we use the notation $\langle f_i \rangle$ to denote the subspace of $\PP_\degreeu$ containing all polynomial multiples of $f_i$ and $\langle f_1,\ldots,f_k\rangle$ to denote $\sum \langle f_i\rangle$.
\end{remark}

By a change of coordinates we may assume that $\ell_1=x,\ell_2=y,$ and $\ell_3=z$, where $z=(x+y+1)$.  Then 
\[
\begin{array}{rl}
\ideal^\bsmooth_{\vertex_1}= & \langle x^{r+1},y^{r+1},L_1^{r+1},L_2^{r+1},\cdots,L_{t_1-2}^{r+1} \rangle\\
\ideal^\bsmooth_{\vertex_2}= & \langle y^{r+1},z^{r+1},M_1^{r+1},M_2^{r+1},\cdots,M_{t_2-2}^{r+1} \rangle\\
\ideal^\bsmooth_{\vertex_3}= & \langle x^{r+1},z^{r+1},N_1^{r+1},N_2^{r+1},\cdots,N_{t_3-2}^{r+1} \rangle,
\end{array}
\]
where $L_i,M_i,N_i$ are linear forms in $x$ and $y$, $y$ and $z$, $x$ and $z$, respectively (regarding $z$ as a variable).  The integer $t_i$ is the maximum number of the powers of linear forms $\{\ell_\edge^{r+1}: \vertex_i\in \tau\}$ which are linearly independent (this is the minimum of $r+2$ and the number of slopes incident upon $\vertex_i$).  A standard basis for $\PP_m$ is provided by the monomials $x^iy^j$, $i+j\le m$.  An alternative basis for $\PP_m$ which is more convenient for our arguments is the polynomials of the form $x^iy^jz^k=x^iy^j(x+y+1)^k$, where $i+j+k=d$.  Lemma~\ref{lem:coker_description} guarantees that $H_0(\idealComplex^\bsmoothr/\idealComplex^\bsmooth)$ will vanish if every polynomial of this form is in the sum $\ideal^\bsmooth_{\vertex_1}+\ideal^{\bsmooth}_{\vertex_2}+\ideal^\bsmooth_{\vertex_3}$.  We can obtain good estimates for this from the integers
\[
\begin{array}{rl}
\Omega_1:= & \min\{d:x^iy^j\in \ideal^\bsmooth_{\vertex_1} \mbox{ for all } i+j\ge d \}\\
\Omega_2:= & \min\{d:y^iz^j\in \ideal^\bsmooth_{\vertex_2} \mbox{ for all } i+j\ge d \}\\
\Omega_3:= & \min\{d:x^iz^j\in \ideal^\bsmooth_{\vertex_3} \mbox{ for all } i+j\ge d \}.
\end{array}
\]
We can in fact obtain these exactly using~\cite{schenck1997family}.
\begin{lemma}\cite[Corollary~3.4]{schenck1997family}\label{lem:regularity}
Let the vertices $\vertex_1,\vertex_2,\vertex_3$ be as above.  Suppose $\vertex_i$ has $n_i$ distinct slopes and put $t_i=\min\{r+2,n_i\}$ for $i=1,2,3$.  Suppose further that $\Omega_i$ is defined as above.  Then, for $i=1,2,3$,
\[
\Omega_i=r+\left\lceil\frac{r+1}{t_i-1}\right\rceil.
\]
\end{lemma}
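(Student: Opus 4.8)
This lemma is quoted as \cite[Corollary~3.4]{schenck1997family}; the plan is to recover it from a direct Hilbert-function computation, which also makes clear why $\Omega_i$ depends only on the number $n_i$ of distinct slopes at $\vertex_i$. The text has already fixed coordinates with $\ell_1 = x$, $\ell_2 = y$ and $\ell_3 = x+y+1 =: z$. In these coordinates $\vertex_1$ is the origin of the $(x,y)$-plane, so $\ideal^\bsmooth_{\vertex_1}$ is the \emph{homogeneous} ideal $J_1 \subseteq \pRing := \RR[x,y]$ generated by the $n_1$ pairwise non-proportional $(\smooth+1)$-st powers $\ell_\edge^{\smooth+1}$, $\edge \ni \vertex_1$; symmetrically $\vertex_2$, $\vertex_3$ are the origins of the $(y,z)$- and $(x,z)$-planes. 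The monomials in the definition of $\Omega_i$ form a basis of $\pRing$ in each degree, so $\Omega_i$ is the least $d$ with $(\pRing / J_i)_e = 0$ for all $e \ge d$; the cutoff at degree $\degreeu$ in the definition is harmless once one knows a posteriori that $\Omega_i \le \smooth + (\smooth + 1) = 2\smooth + 1 \le 3\smooth + 1 \le \degreeu$.

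I would compute the Hilbert function of $\pRing/J_i$ by Macaulay duality: $\dimwp{(\pRing/J_i)_d}$ equals the dimension of the degree-$d$ part of the inverse system of $J_i$, i.e.\ of the space of forms $F$ in the dual polynomial ring $\pRingH$ with $\ell_j(\partial)^{\smooth+1} F = 0$ for every $j$, where $\ell_j(\partial)$ is the constant-coefficient differential operator attached to $\ell_j$. In two variables, for $F$ homogeneous of degree $d$, the condition $\ell_j(\partial)^{\smooth+1} F = 0$ is equivalent to $F$ being divisible by $(\ell_j^\perp)^{\,d-\smooth}$, where $\ell_j^\perp \in \pRingH_1$ is the linear form dual to $\ell_j$; since distinct $\ell_j$ produce pairwise coprime $\ell_j^\perp$, the simultaneous conditions say exactly that $\prod_{j=1}^{n_i} (\ell_j^\perp)^{\,d-\smooth}$ divides $F$. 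Counting degrees,
\[
\dimwp{(\pRing/J_i)_d} = \max\!\left(0,\; (d+1) - n_i(d - \smooth)\right), \qquad d \ge \smooth.
\]
This vanishes precisely when $(n_i - 1)d \ge n_i \smooth + 1$, i.e.\ when $d \ge \smooth + \tfrac{\smooth+1}{n_i - 1}$, so $\Omega_i = \smooth + \bigl\lceil \tfrac{\smooth + 1}{n_i - 1} \bigr\rceil$. Finally, if $n_i \ge \smooth + 2$ then $t_i = \smooth + 2$ and $\bigl\lceil \tfrac{\smooth+1}{n_i-1}\bigr\rceil = \bigl\lceil \tfrac{\smooth+1}{t_i-1}\bigr\rceil = 1$, while if $n_i \le \smooth + 1$ then $t_i = n_i$; in both cases $\Omega_i = \smooth + \bigl\lceil \tfrac{\smooth+1}{t_i - 1}\bigr\rceil$, as claimed.

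The genuinely delicate point is the duality step: one must verify carefully that ``$\ell_j(\partial)^{\smooth+1} F = 0$'' is \emph{exactly} ``$(\ell_j^\perp)^{\,d-\smooth} \mid F$'' — including the degree bookkeeping that makes it vacuous when $d < \smooth + 1$ — and that in two variables coprimality forces the intersection of the several divisibility conditions to be divisibility by the product, \emph{with no genericity hypothesis on the slopes}. This last point is what makes $\Omega_i$ purely combinatorial and is special to the bivariate setting. An alternative that avoids inverse systems is to write down a minimal (Hilbert--Burch) free resolution of $J_i$ in $\pRing$ directly and read off its Castelnuovo--Mumford regularity; this is likewise routine in two variables but requires the same coprimality input.
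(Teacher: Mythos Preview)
Your argument is correct. The paper does not prove this lemma at all --- it simply quotes it as \cite[Corollary~3.4]{schenck1997family} --- so there is no ``paper's own proof'' to compare against. Your route via Macaulay inverse systems is the standard way to establish the result: the key observation, that in two variables $\ell(\partial)^{r+1}F=0$ for $F$ homogeneous of degree $d\ge r$ is equivalent to $(\ell^\perp)^{d-r}\mid F$, reduces the Hilbert function of $R/J_i$ to a divisibility count, and pairwise coprimality of the $\ell_j^\perp$ (automatic from pairwise non-proportionality of the $\ell_j$, since $(a,b)\mapsto(-b,a)$ is a bijection on lines) turns the simultaneous conditions into divisibility by the product. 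The resulting formula $\dim(R/J_i)_d=\max\bigl(0,\,d+1-n_i(d-r)\bigr)$ for $d\ge r$ is exactly what Schenck--Stillman obtain, and the value of $\Omega_i$ follows as you write, including the passage from $n_i$ to $t_i=\min\{r+2,n_i\}$. The only caveat is the one you already flag: the paper's $\Omega_i$ is defined inside $\PP_m$ rather than $R$, so one needs $m$ at least as large as the computed value; since the lemma is only invoked in the regime $m\ge 3r+1\ge 2r+1\ge\Omega_i$, this is harmless.
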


We are now in a position to state our main result on triangulations.

\begin{theorem}\label{thm:triangleRemoval}
Let $\quotientComplex^\bsmooth$ be \gi{} and consider a face $\face$ bounded by edges $\edge_1$, $\edge_2$ and $\edge_3$.  Let $\vertex_i = \edge_i \cap \edge_{i+1}$ be interior vertices of $\mesh$, where the index $i$ is cyclic in $(1, 2, 3)$. Assume that the following hold for all $i \in \{1, 2, 3\}$,
\begin{itemize}
	\item $\bsmooth(\edge_i) = \smooth$,
	\item $\bsmooth(\edge) \neq -1$ for any edge $\edge$ incident on $\vertex_i$,
	%\item the number of edges with distinct slopes meeting at $\vertex_i$ is greater than $\smooth+1$.
\end{itemize}
Suppose $\vertex_i$ has $n_i$ distinct slopes, put $t_i=\min\{r+1,n_i\}$ and $\Omega_i=r+\lceil\frac{r+1}{t_i-1}\rceil$ for $i=1,2,3$.  Then, if
\[
m>\frac{\Omega_1+\Omega_2+\Omega_3-3}{2}
\]
the chain complex $\quotientComplex^\bsmoothr$ for the pruned mesh is \gi{}, where
\begin{equation}
\bsmoothr(\edge) :=
\begin{dcases}
\bsmooth(\edge)\;, & \edge \neq \edge_i\;, i = 1, 2, 3\;,\\
-1\;, & \text{otherwise}.
\end{dcases}
\end{equation}
\end{theorem}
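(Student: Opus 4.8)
The plan is to strip away the homological machinery already in place and reduce the theorem to a single monomial-counting statement. Since the smoothness distribution $\bsmoothr$ in the statement is exactly the one of Lemma~\ref{lem:tri_hole}, and the hypotheses on $\face$, on $\vertex_1,\vertex_2,\vertex_3$, and on $\bsmooth$ are those required there, it suffices to show that the cokernel of the map $\phi$ of Equation~\eqref{eq:tri_hole} vanishes; by Lemma~\ref{lem:coker_description} that cokernel is $\PP_\degreeu/(\ideal^\bsmooth_{\vertex_1}+\ideal^\bsmooth_{\vertex_2}+\ideal^\bsmooth_{\vertex_3})$, so the whole content of the theorem is the identity $\PP_\degreeu=\ideal^\bsmooth_{\vertex_1}+\ideal^\bsmooth_{\vertex_2}+\ideal^\bsmooth_{\vertex_3}$. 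Granting this, the statement about the pruned complex is immediate: with $\bsmoothr(\edge_i)=-1$ for the three boundary edges of $\face$, the quotients $\PP_{\edge_i}/\ideal^\bsmoothr_{\edge_i}$ and $\PP_{\vertex_i}/\ideal^\bsmoothr_{\vertex_i}$ all vanish and the differential kills the summand $\PP_\face$, so deleting $\face$ from the mesh removes only that summand from the degree-two term of the complex and leaves $H_1$ and $H_0$ unchanged, exactly as in the proof of Theorem~\ref{thm:hole_dimension}.

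To prove the identity I would use the normalisation already introduced before the theorem: change coordinates so that $\ell_1=x$, $\ell_2=y$, $\ell_3=z:=x+y+1$, so that $\vertex_1,\vertex_2,\vertex_3$ are the origins of the affine coordinate pairs $(x,y)$, $(y,z)$, $(x,z)$ and the lines through each $\vertex_i$ are homogeneous in the matching pair. The hypothesis $\bsmooth(\edge)\neq-1$ for every $\edge$ incident on $\vertex_i$ forces $\bsmooth(\edge)=\smooth$ there, so in these coordinates $\ideal^\bsmooth_{\vertex_i}$ is the truncation to total degree $\leq\degreeu$ of the homogeneous ideal generated by the $(\smooth+1)$-st powers of the lines through $\vertex_i$, which is exactly the ideal of Lemma~\ref{lem:regularity}. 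Hence $x^ay^b\in\ideal^\bsmooth_{\vertex_1}$ for $a+b\geq\Omega_1$, $y^az^b\in\ideal^\bsmooth_{\vertex_2}$ for $a+b\geq\Omega_2$, and $x^az^b\in\ideal^\bsmooth_{\vertex_3}$ for $a+b\geq\Omega_3$ (the $\Omega_i$ of the statement are no smaller than the regularity bounds of Lemma~\ref{lem:regularity}, since $n_i\geq2$ and decreasing $t_i$ only increases the ceiling term, so the implications still hold). Since each $\ideal^\bsmooth_{\vertex_i}$ is an ideal truncated at degree $\degreeu$, a basis monomial $x^iy^jz^k$ with $i+j+k=\degreeu$ lies in $\ideal^\bsmooth_{\vertex_1}$ as soon as $i+j\geq\Omega_1$ (multiply $x^iy^j$ by $z^k$, rewritten as a polynomial in $x$ and $y$; the total degree stays $\degreeu$), in $\ideal^\bsmooth_{\vertex_2}$ as soon as $j+k\geq\Omega_2$, and in $\ideal^\bsmooth_{\vertex_3}$ as soon as $i+k\geq\Omega_3$; moreover the monomials $x^iy^jz^k$ with $i+j+k=\degreeu$ span $\PP_\degreeu$.

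It then remains to rule out a basis monomial escaping all three conditions. If $x^iy^jz^k$ with $i+j+k=\degreeu$ did, then $i+j\leq\Omega_1-1$, $j+k\leq\Omega_2-1$, $i+k\leq\Omega_3-1$, and adding these gives $2\degreeu\leq\Omega_1+\Omega_2+\Omega_3-3$, contradicting $\degreeu>\tfrac{1}{2}(\Omega_1+\Omega_2+\Omega_3-3)$. Hence every basis monomial of $\PP_\degreeu$ lies in $\ideal^\bsmooth_{\vertex_1}+\ideal^\bsmooth_{\vertex_2}+\ideal^\bsmooth_{\vertex_3}$, the cokernel of $\phi$ vanishes, and the conclusion follows from Lemma~\ref{lem:tri_hole} together with the pruning remark above.

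The step I expect to require the most care is the middle one: invoking Lemma~\ref{lem:regularity} at each vertex in the coordinate pair that makes the incident lines homogeneous, checking that $\ideal^\bsmooth_{\vertex_i}$ really is the degree-$\leq\degreeu$ truncation of that homogeneous ideal (so that the ``multiply by $z^k$'' move lands back inside $\ideal^\bsmooth_{\vertex_i}$), and verifying that the $\Omega_i$ appearing in the theorem dominate rather than undershoot the regularity bound of Lemma~\ref{lem:regularity}. The closing inequality is routine.
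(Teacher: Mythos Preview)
Your approach is essentially identical to the paper's: reduce via Lemmas~\ref{lem:tri_hole} and~\ref{lem:coker_description} to showing $\ideal^\bsmooth_{\vertex_1}+\ideal^\bsmooth_{\vertex_2}+\ideal^\bsmooth_{\vertex_3}=\PP_\degreeu$, use the basis $x^iy^jz^k$ with $i+j+k=\degreeu$ together with Lemma~\ref{lem:regularity}, and sum the three inequalities $i+j\le\Omega_1-1$, $j+k\le\Omega_2-1$, $i+k\le\Omega_3-1$ to obtain the contradiction. Your extra care about the discrepancy between $t_i=\min\{r+1,n_i\}$ in the theorem and $t_i=\min\{r+2,n_i\}$ in Lemma~\ref{lem:regularity}---noting that the theorem's $\Omega_i$ therefore dominate the lemma's, so the implications still hold---is a detail the paper's own proof glosses over, and your treatment of it is correct.
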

\begin{proof}
By Lemma~\ref{lem:tri_hole}, it suffices to show that the cokernel of the map $\phi$ in Equation~\ref{eq:tri_hole} is trivial.  By Lemma~\ref{lem:coker_description} it suffices to show that $\ideal^\bsmooth_{\vertex_1}+\ideal^\bsmooth_{\vertex_1}+\ideal^\bsmooth_{\vertex_2}=\PP_m$ for $m> \frac{\Omega_1+\Omega_2+\Omega_3-3}{2}$.  Suppose that $x^iy^jz^k\notin \ideal^\bsmooth_{\vertex_1}+\ideal^\bsmooth_{\vertex_1}+\ideal^\bsmooth_{\vertex_2}$.  Then, by Lemma~\ref{lem:regularity}, we must have
\[
\begin{array}{c}
i+j\le \Omega_1-1\\
j+k\le \Omega_2-1\\
i+k\le \Omega_3-1.
\end{array}
\]
Summing these leads to $2(i+j+k)\le\Omega_1+\Omega_2+\Omega_3-3$, or $m=(i+j+k)\le\frac{\Omega_1+\Omega_2+\Omega_3-3}{2}$.
\end{proof}

%In what follows, $\overline{\PP}_\degreeu$ is the space of polynomials in variables $x, y, z$ of total degree exactly $\degreeu$, $\langle p \rangle$ is the subspace of $\overline{\PP}_\degreeu$ containing all polynomial multiples of the polynomial $p$, and 
%\begin{equation}
%\langle p_1, p_2, \cdots, p_j \rangle := \sum_{i = 1}^j \langle p_i \rangle\;.
%\end{equation}

\begin{theorem}\label{thm:3r+1}
Suppose $\mesh$ is any triangulation.  Then $\quotientComplex^\bsmooth$ is \gi{} for $m\ge 3r+1$.  In particular, the formula in~\cite[Equation~7.1]{alfeld1990dimension} holds for $m\ge 3r+1$.
\end{theorem}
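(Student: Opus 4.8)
The plan is to deduce \ginoun{} of $\quotientComplex^{\bsmooth}$, for an arbitrary $\bsmooth$ with values in $\{\smooth,-1\}$, from the case of uniform smoothness by lowering the value on one interior edge at a time and applying Corollary~\ref{cor:new_dimension}. Let $\bsmooth^{(0)}$ be the uniform distribution, $\bsmooth^{(0)}(\edge)=\smooth$ for all $\edge\in\meshInteriorE$; the base case is the classical fact that $\quotientComplex^{\bsmooth^{(0)}}$ is \gi{} for $m\ge 3\smooth+1$, which I would cite --- see \cite{alfeld1990dimension} for the dimension formula and \cite{schenck1997family} for the homological viewpoint, noting also that $H_0(\quotientComplex^{\bsmooth^{(0)}})=0$ for any connected triangulation. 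Enumerate the interior edges on which $\bsmooth=-1$ as $\edge^{(1)},\dots,\edge^{(N)}$, and let $\bsmooth^{(k)}$ agree with $\bsmooth^{(0)}$ except that $\bsmooth^{(k)}(\edge^{(j)})=-1$ for $j\le k$, so that $\bsmooth^{(0)}\ge\bsmooth^{(1)}\ge\cdots\ge\bsmooth^{(N)}=\bsmooth$ and consecutive distributions differ on a single edge. By Corollary~\ref{cor:new_dimension}, \ginoun{} propagates from $\bsmooth^{(k)}$ to $\bsmooth^{(k+1)}$ as soon as $H_0(\idealComplex^{\bsmooth^{(k+1)}}/\idealComplex^{\bsmooth^{(k)}})=0$, so this is all that needs checking.

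Fix $k$ and write $\edge=\edge^{(k+1)}$. By Lemma~\ref{lem:support}, $\idealComplex^{\bsmooth^{(k+1)}}/\idealComplex^{\bsmooth^{(k)}}$ is supported on $\edge$ and on those endpoints of $\edge$ lying in $\meshInteriorV$. If $\edge$ has at most one interior endpoint $\vertex$, the complex is $\PP_m/\langle\ell_\edge^{\smooth+1}\rangle\to\PP_m/\ideal^{\bsmooth^{(k)}}_\vertex$ (the target absent when $\edge$ has no interior endpoint), and the differential is the natural projection, which is surjective since $\ell_\edge^{\smooth+1}\in\ideal^{\bsmooth^{(k)}}_\vertex$; hence the $H_0$ vanishes. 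If both endpoints $\vertex,\vertex'$ are interior, a snake-lemma argument analogous to the one in Lemma~\ref{lem:coker_description} identifies this $H_0$ with $\PP_m/(\ideal^{\bsmooth^{(k)}}_\vertex+\ideal^{\bsmooth^{(k)}}_{\vertex'})$, so it remains to prove $\ideal^{\bsmooth^{(k)}}_\vertex+\ideal^{\bsmooth^{(k)}}_{\vertex'}=\PP_m$.

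If $\vertex$ or $\vertex'$ is incident on an edge carrying $\bsmooth^{(k)}$-value $-1$, the corresponding summand already equals $\PP_m$ and we are done; otherwise every edge at $\vertex$ and at $\vertex'$ has value $\smooth$. Since $\vertex$ is an interior vertex of a triangulation, the edges at $\vertex$ do not all share the slope of $\edge$, so there is an edge $\edge'\ni\vertex$ whose linear form $\ell'$ is not proportional to $\ell_\edge$; similarly $\vertex'$ has one with form $\ell''$ not proportional to $\ell_\edge$. Thus $\ideal^{\bsmooth^{(k)}}_\vertex+\ideal^{\bsmooth^{(k)}}_{\vertex'}\supseteq\langle\ell_\edge^{\smooth+1},(\ell')^{\smooth+1},(\ell'')^{\smooth+1}\rangle$, and the three lines $\{\ell_\edge=0\},\{\ell'=0\},\{\ell''=0\}$ have no common point (the first two meet only at $\vertex$, the first and third only at $\vertex'$, and $\vertex\ne\vertex'$), while $\ell'$ and $\ell''$ are distinct as lines. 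If $\{\ell'=0\}$ and $\{\ell''=0\}$ are parallel, then in coordinates with $\ell_\edge=x$, $\ell'=y$, $\ell''=y-1$ one has $1\in\langle y^{\smooth+1},(y-1)^{\smooth+1}\rangle\subseteq\PP_m$ already for $m\ge 2\smooth+1$, by B\'ezout's identity for coprime univariate polynomials. Otherwise a projective change of coordinates gives $\ell_\edge=x$, $\ell'=y$, $\ell''=x+y+1=:z$; using the basis $\{x^iy^jz^k:i+j+k=m\}$ of $\PP_m$, whenever $m\ge 3\smooth+1$ any such triple has $\max\{i,j,k\}\ge\smooth+1$, so $x^iy^jz^k$ is a polynomial multiple of one of $x^{\smooth+1},y^{\smooth+1},z^{\smooth+1}$ and hence lies in $\langle\ell_\edge^{\smooth+1},(\ell')^{\smooth+1},(\ell'')^{\smooth+1}\rangle$. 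In all cases $\ideal^{\bsmooth^{(k)}}_\vertex+\ideal^{\bsmooth^{(k)}}_{\vertex'}=\PP_m$, the $H_0$ vanishes, and the induction closes. Theorem~\ref{thm:geom_indep_dim} (with Theorem~\ref{thm:hole_dimension} for pruned meshes) then converts \ginoun{} of $\quotientComplex^{\bsmooth}$ into the dimension formula of \cite[Equation~7.1]{alfeld1990dimension}.

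I expect the delicate point to be the base case rather than the inductive step: one must be sure that the classical $m\ge 3\smooth+1$ result is used in homological form ($H_1(\quotientComplex^{\bsmooth^{(0)}})=H_0(\quotientComplex^{\bsmooth^{(0)}})=0$) and is cited from a source logically prior to \cite[Equation~7.1]{alfeld1990dimension}, so that the concluding ``in particular'' is not circular. The inductive step is essentially the counting argument of Theorem~\ref{thm:triangleRemoval}; its only genuinely new ingredient is the reduction --- via Lemma~\ref{lem:support} and the snake lemma --- of a single-edge smoothness reduction to the ideal identity $\langle\ell_\edge^{\smooth+1},(\ell')^{\smooth+1},(\ell'')^{\smooth+1}\rangle=\PP_m$, together with the observation that the three linear forms occurring there are automatically in general position.
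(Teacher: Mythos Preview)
Your inductive step is correct: the snake-lemma identification of $H_0(\idealComplex^{\bsmooth^{(k+1)}}/\idealComplex^{\bsmooth^{(k)}})$ with $\PP_m/(\ideal^{\bsmooth^{(k)}}_\vertex+\ideal^{\bsmooth^{(k)}}_{\vertex'})$ is right, and your three-linear-forms argument shows this vanishes for $m\ge 3r+1$. But the overall architecture differs from the paper's, and as written it has a genuine gap in exactly the place you flag.

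The paper inducts on the \emph{number of triangles needed to fill the holes of $\mesh$}, not on the number of $-1$ edges. Its base case is a simply-connected triangulation with uniform smoothness $r$, where \ginoun{} for $m\ge 3r+1$ is the classical result (\cite[Theorem~2.2]{alfeld1990dimension}, in the homological form of \cite{schenck1997family}). The inductive step forms $\mesh'$ by adding one triangle $\sigma$ that begins filling a hole, invokes the inductive hypothesis on $\mesh'$, and then reduces smoothness to $-1$ along $\partial\sigma$ --- via Lemma~\ref{lem:tri_edge} when $\sigma$ still meets the boundary of $\mesh'$, and via Theorem~\ref{thm:triangleRemoval} (using $\Omega_i\le 2r+1$) when $\sigma$ fills the last triangular piece of a hole.

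In your scheme the mesh $\mesh$ stays fixed and the base case is $\bsmooth^{(0)}\equiv r$ on $\meshInteriorE$. But when the target $\bsmooth$ is itself uniform $r$ on a triangulation with holes --- the very case whose dimension formula is \cite[Equation~7.1]{alfeld1990dimension} --- your induction has $N=0$ steps and the theorem collapses entirely onto its own base case. There is no source ``logically prior'' to that equation to cite: the whole point of the theorem (see the Remark following it) is to \emph{derive} \cite[Equation~7.1]{alfeld1990dimension} from \cite[Theorem~2.2]{alfeld1990dimension}, i.e.\ to pass from simply-connected to non-simply-connected domains. The fix is not a better citation but a different induction: first fill the holes of $\mesh$ to get a simply-connected $\mesh'$, take uniform $r$ on $\mesh'$ as the (genuinely classical) base case, and then run your edge-by-edge reduction on the edges bounding the filled region before pruning. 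With that change your argument becomes a valid alternative to the paper's triangle-by-triangle one, and is arguably more flexible since it never requires the edges being reduced to bound a common face.
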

\begin{proof}
An arbitrary planar triangulation $\mesh$ has some number of holes, along with a minimal number of triangles needed to `fill in' those holes with a triangulation.  We induct on this minimal number of triangles needed to `fill in' the holes.  If no such triangles are needed, then $\mesh$ triangulates a simply connected region and the main result of~\cite{alfeld1990dimension} implies that the corresponding chain complex $\quotientComplex^\bsmooth$ is \gi{} for $m\ge 3r+1$.  Now suppose $\mesh$ triangulates an arbitrary non-simply connected region with corresponding chain complex $\quotientComplex^\bsmoothr$.  Pick one of the holes in $\mesh$ and form $\mesh'$ by adding in a triangular face $\sigma$ so that
\begin{enumerate}
\item $\sigma$ begins a filling of the holes of $\mesh$ in a minimal fashion
\item $\sigma\cap\mesh$ is connected
\end{enumerate}
By induction, the chain complex $\quotientComplex^\bsmooth$ corresponding to $\mesh'$ is \gi{} for $m\ge 3r+1$.  If $\sigma\cap\mesh$ is not the entire boundary of $\sigma$, then applying Lemma~\ref{lem:tri_edge} at most twice yields that $\quotientComplex^\bsmoothr$ is \gi{} for $m\ge 3r+1$.  If $\sigma\cap\mesh$ is the entire boundary of $\sigma$, then applying Theorem~\ref{thm:triangleRemoval} yields that $\quotientComplex^\bsmoothr$ is \gi{} for $m\ge 3r+1$ as long as $(\Omega_1+\Omega_2+\Omega_3-3)/2< 3r+1$.  The integers $\Omega_i$ are largest when only two slopes meet at $\vertex_i$.  In this case $\Omega_i=2r+1$.  Hence
\[
(\Omega_1+\Omega_2+\Omega_3-3)/2\le (6r)/2= 3r<3r+1,
\]
completing the induction.
\end{proof}

\begin{remark}
In essence, Theorem~\ref{thm:3r+1} recovers~\cite[Equation~7.1]{alfeld1990dimension} from~\cite[Theorem~2.2]{alfeld1990dimension}.
\end{remark}

%%End Mike 11/24 and 11/25

Lemma~\ref{lem:coker_description} can be used to give better estimates in non-generic situations, as we illustrate in the following corollary.
%Given a triangulation $\mesh$ on which $\quotientComplex^\bsmooth$ is \gi{}, we can use the above sufficient conditions to reduce the smoothness across the entire boundaries of one or more faces $\face$ to $-1$.
%Thereafter, we can use Theorem \ref{thm:hole_dimension} to compute the dimension of corresponding pruned spline space.
%The following result follows upon specialization of Lemmas \ref{lem:tri_edge} and \ref{lem:tri_hole} to a couple of choices of low $\degreeu$ and high $\smooth$.
\begin{corollary}\label{cor:tri_special_cases}
Let $\smooth$, $\degreeu$ and $\mesh$ be such that $\quotientComplex^\bsmooth$ is \gi{}.  If every interior vertex of $\mesh$ has at least $r+2$ distinct slopes incident upon it and $\degreeu>\frac{3r}{2}$ then the smoothness across the boundary of any face $\face\in\meshF$ can be reduced to $-1$ while preserving \ginoun{}.  In particular,
	\begin{enumerate}[label=(\Alph*)]
		\item $\degreeu \geq 2$ and $\smooth=1$:
		If $\mesh$ is such that any interior vertex has edges with at least $3$ distinct slopes incident upon it,
		then the smoothness across the boundary of any face $\face \in \meshF$ can be reduced to $-1$ while preserving \ginoun{}.
		
		\item $\degreeu \geq 4$ and $\smooth=2$:
		If $\mesh$ is such that any interior vertex has edges with at least $4$ distinct slopes incident upon it,
		then the smoothness across the boundary of any face $\face \in \meshF$ can be reduced to $-1$ while preserving \ginoun{}.
	\end{enumerate}
	In both of the above cases, the dimension of the corresponding pruned spline space can be computed by a direct application of Theorem \ref{thm:hole_dimension}.
\end{corollary}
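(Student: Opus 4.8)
The plan is to reduce to Theorem~\ref{thm:triangleRemoval} for faces in the generic configuration and to a cascade of applications of Lemma~\ref{lem:tri_edge} in every other case. Fix a face $\face$ with bounding edges $\edge_1,\edge_2,\edge_3$ and vertices $\vertex_i=\edge_i\cap\edge_{i+1}$. Consider first the configuration in which $\bsmooth(\edge_i)=\smooth$ for all $i$, every $\vertex_i$ is interior, and no edge incident on any $\vertex_i$ has smoothness $-1$ — that is, the hypotheses of Theorem~\ref{thm:triangleRemoval}. The one thing to check is that the slope assumption collapses the numerical condition there to $\degreeu>3\smooth/2$: since the interior vertices $\vertex_i$ carry at least $\smooth+2$ slopes, $n_i\geq\smooth+2$, so $t_i=\min\{\smooth+2,n_i\}=\smooth+2$ and Lemma~\ref{lem:regularity} gives
\[
\Omega_i=\smooth+\left\lceil\frac{\smooth+1}{t_i-1}\right\rceil=\smooth+\left\lceil\frac{\smooth+1}{\smooth+1}\right\rceil=\smooth+1,
\]
hence $\frac{\Omega_1+\Omega_2+\Omega_3-3}{2}=\frac{3\smooth}{2}$. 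Thus Theorem~\ref{thm:triangleRemoval} applies as soon as $\degreeu>3\smooth/2$ and shows that $\quotientComplex^\bsmoothr$ is \gi{}; equivalently, with $\Omega_i=\smooth+1$ the obstruction inequalities $i+j\leq\Omega_1-1$, $j+k\leq\Omega_2-1$, $i+k\leq\Omega_3-1$ become $i+j,\,j+k,\,i+k\leq\smooth$, which force $\degreeu=i+j+k\leq 3\smooth/2$, so for $\degreeu>3\smooth/2$ every monomial $x^iy^jz^k$ of degree $\degreeu$ lies in $\ideal^\bsmooth_{\vertex_1}+\ideal^\bsmooth_{\vertex_2}+\ideal^\bsmooth_{\vertex_3}$ and Lemma~\ref{lem:coker_description} kills $H_0(\idealComplex^\bsmoothr/\idealComplex^\bsmooth)$.

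In every other configuration there is already an edge of smoothness $-1$ incident on some vertex $\vertex_k$ of $\face$ — either one of the $\edge_i$ themselves has $\bsmooth(\edge_i)=-1$, or some $\vertex_k$ lies on $\domainBnd$ and therefore carries a boundary edge, or some edge at some $\vertex_i$ already has smoothness $-1$ — and I expect the routine book-keeping here to be the only real work. Starting from this $-1$-edge, one reduces the bounding edges of $\face$ to $-1$ one at a time using Lemma~\ref{lem:tri_edge}: first the interior, $\smooth$-smooth edges of $\face$ meeting $\vertex_k$, and then the third bounding edge, which by that point is incident at one of its endpoints on an edge of $\face$ that has just been set to $-1$; bounding edges of $\face$ that are already $-1$ are simply skipped. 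Each step preserves \ginoun{} by Lemma~\ref{lem:tri_edge} and changes only the edge being reduced, so after finitely many steps the distribution is exactly $\bsmoothr$ and $\quotientComplex^\bsmoothr$ is \gi{} in this case as well, with no restriction on $\degreeu$. Since the two configurations are exhaustive, the first assertion of the corollary follows.

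The two displayed special cases are then substitutions: $\smooth=1$ needs $\smooth+2=3$ slopes and $3\smooth/2=3/2$, so $\degreeu\geq2$ works; $\smooth=2$ needs $\smooth+2=4$ slopes and $3\smooth/2=3$, so $\degreeu\geq4$ works. Finally, with $\quotientComplex^\bsmoothr$ now known to be \gi{} on $\mesh$, Theorem~\ref{thm:hole_dimension} applied to the pruned mesh $\hat\mesh$ obtained by deleting $\face$ gives
\[
\dimwp{\splSpace^{\hat{\bsmooth}}_{\hat{\mbf{\degreeu}}}(\hat\mesh)}=\euler{\quotientComplex^\bsmoothr}-\dimwp{\PP_\degreeu},
\]
the claimed dimension of the pruned spline space.
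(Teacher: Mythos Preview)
Your proof is correct and is essentially what the paper intends. The paper states this corollary without proof, as an immediate consequence of Theorem~\ref{thm:triangleRemoval}: under the slope hypothesis $n_i\geq r+2$ one has $t_i=r+2$, hence $\Omega_i=r+1$ by Lemma~\ref{lem:regularity}, and the numerical condition of Theorem~\ref{thm:triangleRemoval} collapses to $m>3r/2$. Your computation of this is exactly right. (Note that the statement of Theorem~\ref{thm:triangleRemoval} in the paper writes $t_i=\min\{r+1,n_i\}$, which is a typo; the correct definition, which you use, is $t_i=\min\{r+2,n_i\}$ as in Lemma~\ref{lem:regularity} and the surrounding text.)

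Your treatment actually goes a little further than the paper's implicit argument: you take care to cover faces that fail the hypotheses of Theorem~\ref{thm:triangleRemoval} (a boundary vertex, or a vertex already carrying an edge of smoothness $-1$, or a bounding edge already at $-1$) by cascading Lemma~\ref{lem:tri_edge}. The paper does not spell this out, but your handling is correct and makes the corollary apply literally to \emph{any} face of $\mesh$ as stated.
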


\begin{example}[$C^1$ quadratic splines on a domain with holes]\label{ex:triangulations}
	Consider the meshes shown in Figure \ref{fig:tri_2holes}.
	We are interested in the space of $C^1$ quadratic splines on mesh in (a).
	Then, we can interpret this space as the pruned version of the space of $C^1$ quadratics on on the triangulation in (b) after we have reduced the smoothness along the dashed edges in (c) to $-1$.
	Therefore, we start by looking at the mesh in (b).
	
	First, for the mesh in (b), choose all $\PP_\face = \PP_2$ for all faces $\face$, where $\PP_2$ is the space of polynomials of total degree at most $2$, and $\bsmooth(\edge) = 1$ for all interior edges $\edge$.
	It can be checked, using the formulas in \cite{mourrain2013homological}, for instance, that $\quotientComplex^\bsmooth$ is \gi{} and the dimension of the corresponding space is $27$.
	
	Then, using Lemmas \ref{lem:tri_edge} and \ref{lem:tri_hole}, we can reduce the smoothness across all dashed edges in (c) to $-1$ while preserving \ginoun{} (c.f. Corollary \ref{cor:tri_special_cases}(A)).
	The dimension of the resulting space is $53$.
	As a result, the dimension of the pruned space in (a) can be exactly computed using Theorem \ref{thm:hole_dimension} to be $29$.
	
	Note that, for the mesh in (a), $H_1(\constantComplex)$ is not $0$.
	Therefore, it is not directly covered by the approach presented in \cite{mourrain2013homological} for dimension counting.
	The result of the computation of course coincides with Billera \cite[Theorem 5.8]{billera1988homology} since we looked at the special choice of $\smooth=1$.
	Nonetheless, our results can also be applied for different choices of $\bsmooth$; c.f. Corollary \ref{cor:tri_special_cases}(B).
\end{example}

\begin{rem}\label{rem:dim_increase}
	At first glance, it may seem strange that the dimension of splines in Figure \ref{fig:tri_2holes}(b) is smaller than the dimension of splines in (a).
	However, this makes sense because, while removing some faces from the mesh, we also removed the smoothness constraints across the boundaries of those faces.
	The net effect of such operations may very well lead to an increase in the dimension, as is the case here.
	The same observation will also hold later when we look at T-meshes in Example \ref{ex:tmesh}.
\end{rem}

\begin{figure}[t]
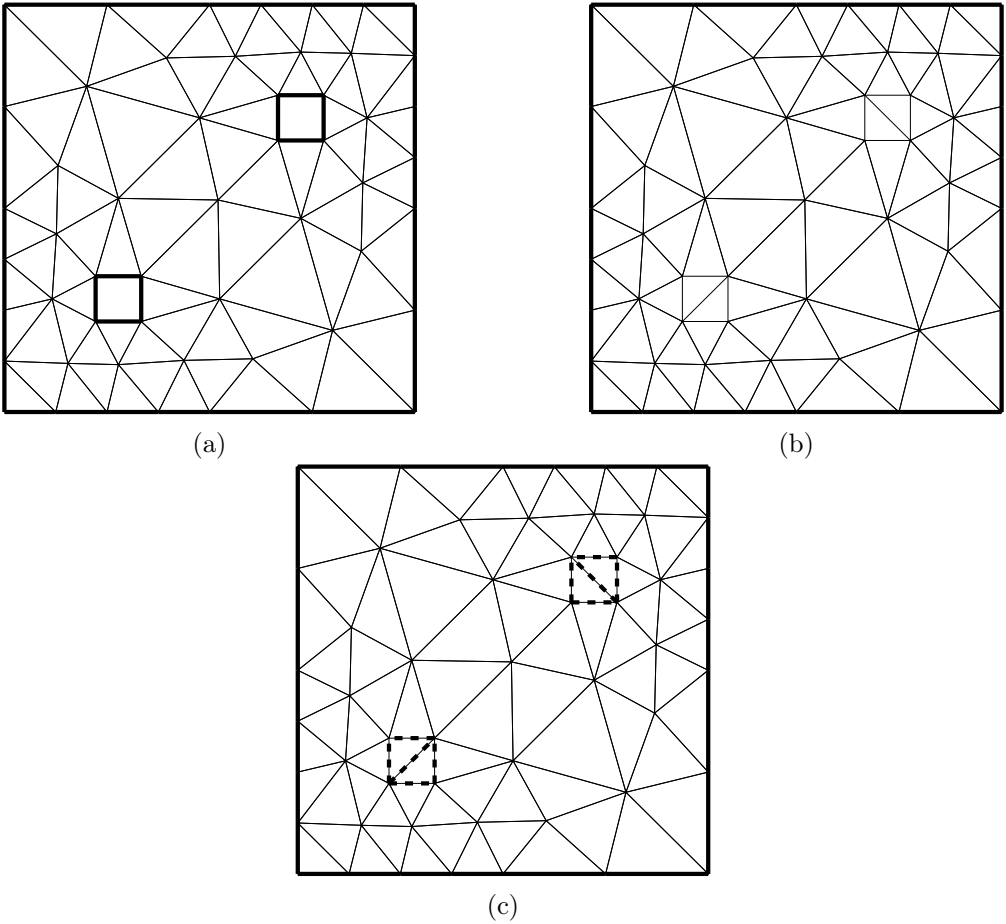

	\centering
	\subcaptionbox{}[0.49\textwidth]{\includetikz{tri_2holes}{./tikz}}
	\subcaptionbox{}[0.49\textwidth]{\includetikz{tri_2holes_1}{./tikz}}\\
	\subcaptionbox{}[0.49\textwidth]{\includetikz{tri_2holes_0}{./tikz}}\\
	\caption{A non-simply connected domain and its triangulation are shown in (a).
		Starting from the triangulation in (b), we can interpret the triangulation in (a) as a pruned triangulation once the smoothness across the dashed edges in (c) have been reduced to $-1$.
		In (c), reducing the smoothness to $-1$ decouples the faces inside the dashed region from those outside the dashed region.
		Then, the required spine space dimension can be computed from the one on (c) using Theorem \ref{thm:hole_dimension}, i.e., by subtracting $\dimwp{\PP_\face}$ for each $\face$ contained inside the dashed region.
		Note that all domain boundaries have been displayed in bold.}
	\label{fig:tri_2holes}
\end{figure}

\subsection{Polygonal meshes}
Now suppose $\mesh$ is an arbitrary rectilinear mesh (allowing polygonal faces) with the same setup as in Section~\ref{ssec:triangulations}.  That is,
\begin{equation}
\begin{split}
&\PP_\face = \PP_\degreeu\;,\;\;\forall \face \in \meshF\;,\\
&\bsmooth(\edge) \in \{\smooth, -1\}\;,\;\;\forall \edge \in \meshInteriorE\;,
\end{split}
\end{equation}
where $\degreeu \in \ZZP$, $\smooth \in \ZZ_{\geq -1}$, and $\PP_\degreeu$ is the space of polynomials of total degree at most $\degreeu$.  For technical reasons we will assume that the polygonal faces are convex although this condition could be dropped for particular examples.

We consider reducing smoothness to $-1$ across the boundary of a face $\face\in\mesh$.  Suppose $\face$ is bounded by edges $\edge_1$, $\edge_2,\ldots,\edge_k$.  Let $\vertex_i = \edge_i \cap \edge_{i+1}$ be interior vertices of $\mesh$, where the index $i$ is cyclic in $(1, 2, \ldots, k)$.  Let $\ell_i$ be a linear form vanishing along $\edge_i$ for $i=1,2,\ldots,k$.  Let $\bsmoothr$ be the smoothness distribution where
\begin{equation}
\bsmoothr(\edge) :=
\begin{dcases}
\bsmooth(\edge)\;, & \edge \neq \edge_i\;, i = 1, \ldots, k\;,\\
-1\;, & \text{otherwise}.
\end{dcases}
\end{equation}

Then the non-trivial map $\phi$ in $H_0(\idealComplex^\bsmoothr/\idealComplex^\bsmooth)$ is given by

\begin{equation}
\begin{tikzcd}[ampersand replacement=\&,column sep=huge]
\phi~:~\begin{array}{c}
\PP_\degreeu / \langle \ell_1^{\smooth+1} \rangle\\
\moplus\\
\PP_\degreeu / \langle \ell_2^{\smooth+1} \rangle\\
\moplus\\
\vdots\\
\moplus\\
\PP_\degreeu / \langle \ell_k^{\smooth+1} \rangle
\end{array}\quad
\arrow{r}{\begin{bmatrix}
	-1 & 1 & \cdots & 0\\
	0 & -1 & \cdots & 0 \\
	\vdots & \vdots & \ddots & \vdots\\
	1 & 0 & \cdots & -1
	\end{bmatrix}} \&
\quad
\begin{array}{c}
\PP_\degreeu / \ideal^\bsmooth_{\vertex_1}\\
\moplus\\
\PP_\degreeu / \ideal^\bsmooth_{\vertex_2}\\
\moplus\\
\vdots\\
\moplus\\
\PP_\degreeu / \ideal^\bsmooth_{\vertex_k}
\end{array}\;.
\end{tikzcd}
\label{eq:poly_hole}
\end{equation}

\begin{lemma}\label{lem:polygonalcokernel}
The cokernel of $\phi$ (and hence $H_0(\idealComplex^\bsmoothr/\idealComplex^\bsmooth)$) is isomorphic to $\PP_\degreeu/(\ideal^\bsmooth_{\vertex_1}+\cdots+\ideal^\bsmooth_{\vertex_k})$.
\end{lemma}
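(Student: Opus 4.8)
The plan is to mimic exactly the proof of Lemma~\ref{lem:coker_description}, since the only change from the triangular case to the $k$-gon case is that the cyclic-difference matrix defining $\phi$ has size $k$ rather than $3$. First I would set up the same $3\times 3$ commutative diagram of short exact sequences: the top row is $\bigoplus_{i=1}^k \langle \ell_i^{\smooth+1}\rangle \xrightarrow{\phi''} \bigoplus_{i=1}^k \ideal^\bsmooth_{\vertex_i} \to \coker(\phi'')\to 0$, the middle row is $\bigoplus_{i=1}^k \PP_\degreeu \xrightarrow{\phi'} \bigoplus_{i=1}^k \PP_\degreeu \xrightarrow{[1\;\cdots\;1]} \PP_\degreeu\to 0$, where $\phi'$ is the same cyclic-difference matrix (with $-1$ on the diagonal and $1$ on the cyclic superdiagonal), and the bottom row is $\bigoplus_{i=1}^k \PP_\degreeu/\langle\ell_i^{\smooth+1}\rangle \xrightarrow{\phi} \bigoplus_{i=1}^k \PP_\degreeu/\ideal^\bsmooth_{\vertex_i}\to\coker(\phi)\to 0$. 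The vertical maps are the obvious quotient maps $\iota$, $\pi$ and the induced maps $\overline\iota$, $\overline\pi$.

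The key observation, and the one place that needs a tiny argument rather than being a verbatim copy, is that the middle row is exact: that $\ker(\phi')$ is exactly the diagonal copy of $\PP_\degreeu$ and hence that $[1\;\cdots\;1]$ realizes $\coker(\phi')\cong\PP_\degreeu$. This is where the cyclic structure matters. I would note that $\phi'$ applied to $(p_1,\dots,p_k)$ yields $(p_{i+1}-p_i)_{i}$ (indices cyclic); this vanishes iff all $p_i$ are equal, giving $\ker(\phi')=\{(p,\dots,p)\}$, which is visibly the kernel of $[1\;\cdots\;1]$ composed with — no, more carefully — the image of $\phi'$ is exactly the set of tuples summing to zero (a telescoping sum around the cycle forces $\sum_i(p_{i+1}-p_i)=0$, and conversely any zero-sum tuple is hit, by the standard partial-sum construction), so $\coker(\phi')\cong\PP_\degreeu$ via $[1\;\cdots\;1]$. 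With this in hand the right-hand column of the diagram is legitimately a complex and the snake lemma applies.

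Then I would invoke the snake lemma on the left two columns exactly as in Lemma~\ref{lem:coker_description}: the connecting portion gives the exact sequence $\coker(\phi'')\xrightarrow{\overline\iota}\coker(\phi')\xrightarrow{\overline\pi}\coker(\phi)\to 0$, so $\coker(\phi)\cong\coker(\overline\iota)$. Commutativity of the diagram identifies $\overline\iota$ with $[1\;\cdots\;1]\circ\iota$ under the isomorphism $\coker(\phi')\cong\PP_\degreeu$, and the image of $[1\;\cdots\;1]\circ\iota$ inside $\PP_\degreeu$ is plainly $\ideal^\bsmooth_{\vertex_1}+\cdots+\ideal^\bsmooth_{\vertex_k}$. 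Hence $\coker(\phi)\cong\PP_\degreeu/(\ideal^\bsmooth_{\vertex_1}+\cdots+\ideal^\bsmooth_{\vertex_k})$, which is the claim; Lemma~\ref{lem:tri_hole}'s proof already records that this cokernel is $H_0(\idealComplex^\bsmoothr/\idealComplex^\bsmooth)$.

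I do not anticipate a serious obstacle — the argument is structurally identical to the $k=3$ case. The one point deserving a sentence of care is the exactness of the middle row for general $k$ (the telescoping-around-the-cycle argument above), since for $k=3$ it was simply asserted by displaying the matrices; I would make that explicit so the snake lemma is applied to an honest commutative diagram with exact rows and columns. Everything else — that the columns are exact (these are just the defining quotient sequences for $\ideal^\bsmooth_{\vertex_i}\subset\PP_\degreeu$ and $\langle\ell_i^{\smooth+1}\rangle\subset\PP_\degreeu$) and that the squares commute — is immediate from the definitions.
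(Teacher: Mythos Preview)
Your proposal is correct and follows the paper's own approach: the paper's proof is literally ``The proof is the same as Lemma~\ref{lem:coker_description},'' and you have spelled out precisely that, replacing $3$ by $k$ throughout. Your extra sentence verifying exactness of the middle row for general $k$ (via the telescoping/zero-sum argument) is a welcome clarification but not a departure from the paper's method.
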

\begin{proof}
The proof is the same as Lemma~\ref{lem:coker_description}.
\end{proof}

If the face $\face$ is not triangular, then it is likely that $(\ideal^\bsmooth_{\vertex_1}+\cdots+\ideal^\bsmooth_{\vertex_k})=\PP_m$ for quite small $m$ relative to $r$ and it is possible to obtain quite accurate estimates for the smallest such $m$.  However, this equation typically holds in degree far lower than dimension formulas are actually known (see~\cite{dipasquale2018dimension}), so we focus on giving some coarse estimates that are easy to derive.

\begin{proposition}\label{prop:polygonal}
Let $\mesh$ be a planar polygonal mesh and $\face$ a face of $\mesh$ with bounding edges $\edge_1,\ldots,\edge_k$ and vertices $\vertex_i=\edge_i\cap\edge_{i+1}$ ($i$ taken cyclically from $(1,\ldots,k)$).  Let $\bsmoothr,\bsmooth$ be the smoothness distributions given above, where $\bsmoothr$ reduces smoothness along the edges of $\face$ to $-1$.  As above, let $\Omega_i=r+\lceil \frac{r+1}{t_i-1}\rceil$, where $t_i=\min\{r+1,n_i\}$ and $n_i$ is the number of slopes incident at $\vertex_i$.  Then $H_0(\idealComplex^\bsmoothr/\idealComplex^\bsmooth)$ vanishes for
\begin{itemize}
\item $m>3r$ or
\item $m>\Omega_i+\Omega_{i+1}-2$ for any $i=1,\ldots,k$.
\end{itemize}
In particular, if $\quotientComplex^\bsmooth$ is \gi{} then so is $\quotientComplex^\bsmoothr$.
\end{proposition}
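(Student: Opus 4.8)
The plan is to reduce the statement to an elementary fact about one vector space. By Lemma~\ref{lem:polygonalcokernel} we have $H_0(\idealComplex^\bsmoothr/\idealComplex^\bsmooth)\cong \PP_m/(\ideal^\bsmooth_{\vertex_1}+\cdots+\ideal^\bsmooth_{\vertex_k})$, so it suffices to prove that $\ideal^\bsmooth_{\vertex_1}+\cdots+\ideal^\bsmooth_{\vertex_k}=\PP_m$ under either of the two bounds on $m$; the closing ``in particular'' sentence then follows immediately from Corollary~\ref{cor:new_dimension}. Since only a lower bound on this sum is needed, I would work with two consecutive (interior) vertices $\vertex_i$ and $\vertex_{i+1}$ of $\face$, which share the edge $\edge_{i+1}$. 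Because $\edge_i,\edge_{i+1}$ are incident to $\vertex_i$ and $\edge_{i+1},\edge_{i+2}$ are incident to $\vertex_{i+1}$, one has $\ideal^\bsmooth_{\vertex_i}+\ideal^\bsmooth_{\vertex_{i+1}}\supseteq \langle\ell_i^{r+1}\rangle+\langle\ell_{i+1}^{r+1}\rangle+\langle\ell_{i+2}^{r+1}\rangle$, and the goal becomes to prove that this right-hand side is already all of $\PP_m$.

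The tool for this is the monomial basis $\{\ell_i^a\ell_{i+1}^b\ell_{i+2}^c : a+b+c=m\}$ of $\PP_m$, the polygonal analogue of the basis $x^iy^j(x+y+1)^k$ used in Section~\ref{ssec:triangulations}. The forms $\ell_i$ and $\ell_{i+1}$ are linearly independent since $\edge_i\cap\edge_{i+1}=\vertex_i$, and the line carrying $\edge_{i+2}$ does not pass through $\vertex_i$ — otherwise it would coincide with the line carrying $\edge_{i+1}$, contradicting that the polygon has a genuine vertex at $\vertex_{i+1}$ — and this is exactly the non-degeneracy that makes the displayed set a basis. I would then test membership of a basis monomial $\ell_i^a\ell_{i+1}^b\ell_{i+2}^c$ in the right-hand side above, handling the two bounds in parallel.

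For $m>3r$: if $a\ge r+1$ then $\ell_i^a\ell_{i+1}^b\ell_{i+2}^c$ is $\ell_i^{r+1}$ times a polynomial of total degree $m-r-1$, hence lies in $\langle\ell_i^{r+1}\rangle\subseteq\ideal^\bsmooth_{\vertex_i}$, and similarly for $b\ge r+1$ or $c\ge r+1$. A basis monomial not covered this way has $a,b,c\le r$, so $m=a+b+c\le 3r$; thus $m>3r$ forces every basis monomial into the sum. For $m>\Omega_i+\Omega_{i+1}-2$: if $a+b\ge\Omega_i$ then $\ell_i^a\ell_{i+1}^b\in\ideal^\bsmooth_{\vertex_i}$ by Lemma~\ref{lem:regularity}, and multiplying by $\ell_{i+2}^c$ keeps the monomial in $\ideal^\bsmooth_{\vertex_i}$ for degree reasons, exactly as in the proof of Theorem~\ref{thm:triangleRemoval}; symmetrically, $b+c\ge\Omega_{i+1}$ puts the monomial in $\ideal^\bsmooth_{\vertex_{i+1}}$. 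An uncovered basis monomial therefore satisfies $a+b\le\Omega_i-1$ and $b+c\le\Omega_{i+1}-1$, whence $m=a+b+c\le a+2b+c\le\Omega_i+\Omega_{i+1}-2$; so again $m>\Omega_i+\Omega_{i+1}-2$ excludes it. In either case $\langle\ell_i^{r+1}\rangle+\langle\ell_{i+1}^{r+1}\rangle+\langle\ell_{i+2}^{r+1}\rangle=\PP_m$, hence $\ideal^\bsmooth_{\vertex_1}+\cdots+\ideal^\bsmooth_{\vertex_k}=\PP_m$ and $H_0(\idealComplex^\bsmoothr/\idealComplex^\bsmooth)=0$.

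The routine part is degree bookkeeping: tracking degrees through the degree truncation built into $\langle\cdot\rangle$, and checking that $\Omega_i,\Omega_{i+1}\le m$ in the stated range so that Lemma~\ref{lem:regularity} is available. The one point that needs real care is the basis claim: one must confirm that three consecutive edges of a convex polygon genuinely supply the non-degeneracy behind $\{\ell_i^a\ell_{i+1}^b\ell_{i+2}^c\}$ being a basis, and in particular deal with the configuration $\edge_i\parallel\edge_{i+2}$ (which does occur, e.g. for a rectangular face), where there is no ``triangle'' to visualize; there I would argue directly that, after centering coordinates at $\vertex_i$, the form $\ell_{i+2}$ has nonzero constant term, which is all the basis fact needs. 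I expect this to be the main obstacle; everything else is the two-vertex version of the computation already carried out for triangulations in Theorem~\ref{thm:triangleRemoval} and Lemma~\ref{lem:coker_description}.
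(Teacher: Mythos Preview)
Your proposal is correct and follows essentially the same route as the paper: reduce via Lemma~\ref{lem:polygonalcokernel} to showing $\sum_i\ideal^\bsmooth_{\vertex_i}=\PP_m$, choose three edge forms whose $m$th-degree products give a basis of $\PP_m$, and bound the exponents. The only cosmetic difference is that for the $m>3r$ case the paper picks $\edge_1,\edge_2$ together with a possibly non-consecutive third edge (after a coordinate change to $x,y,x{+}y{+}1$), while you use three consecutive edges for both bounds and explicitly justify the basis claim when $\edge_i\parallel\edge_{i+2}$.
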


\begin{proof}
By Lemma~\ref{lem:polygonalcokernel} it suffices to show that $(\ideal^\bsmooth_{\vertex_1}+\cdots+\ideal^\bsmooth_{\vertex_k})=\PP_m$ for the two cases above.  Without loss of generality we can change coordinates so that $\ideal^\bsmooth_{\edge_1}=\langle x^{r+1} \rangle, \ideal^\bsmooth_{\edge_2}=\langle y^{r+1}\rangle ,$ and $\ideal^\bsmooth_{\edge_i}=\langle z^{r+1}\rangle$ for some $1<i<k$, where $z=(x+y+1)$.  Then it is clear that $\langle x^{r+1},y^{r+1},z^{r+1}\rangle \subset (\ideal^\bsmooth_{\vertex_1}+\cdots+\ideal^\bsmooth_{\vertex_k})$.  We again choose to use the basis $x^iy^jz^k$, $i+j+k=\degreeu$, for $\PP_\degreeu$.  If $x^iy^jz^k\notin \langle x^{r+1},y^{r+1},z^{r+1}\rangle$ then we must have $i\le r, j\le r,$ and $k\le r$; thus $i+j+k\le 3r$.  It follows that $\langle x^{r+1},y^{r+1},z^{r+1}\rangle=\PP_\degreeu$, hence also $(\ideal^\bsmooth_{\vertex_1}+\cdots+\ideal^\bsmooth_{\vertex_k})=\PP_\degreeu$.

Now suppose that $m>\Omega_i+\Omega_{i+1}-2$ for some $i=1,\ldots,k$.  Again, changing coordinates we may assume that
\[
\begin{array}{rl}
\ideal^\bsmooth_{\vertex_i}= & \langle x^{r+1},y^{r+1},L_1^{r+1},L_2^{r+1},\cdots,L_{t_1-2}^{r+1} \rangle\\
\ideal^\bsmooth_{\vertex_{i+1}}= & \langle y^{r+1},z^{r+1},M_1^{r+1},M_2^{r+1},\cdots,M_{t_2-2}^{r+1} \rangle\\
\end{array}
\]
where $L_i$ and $M_i$ are linear forms in $x$ and $y$, and $y$ and $z$, respectively (regarding $z$ as a variable, where we are again letting $z=x+y-1$). 
Recall that $x^iy^jz^k$, $i+j+k=m$, form a basis for $\PP_{\degreeu}$. 
Suppose $x^iy^jz^k\notin \ideal^\bsmooth_{\vertex_i}+\ideal^\bsmooth_{\vertex_{i+1}}$.  Then $i+j\le \Omega_i-1$ and $j+k\le\Omega_{i+1}-1$.  Hence $i+j+k\le i+2j+k\le \Omega_i+\Omega_{i+1}-2$, contrary to assumption.  Hence $\ideal^\bsmooth_{\vertex_i}+\ideal^\bsmooth_{\vertex_{i+1}}=\PP_{\degreeu}$, and thus $(\ideal^\bsmooth_{\vertex_1}+\cdots+\ideal^\bsmooth_{\vertex_k})=\PP_{\degreeu}$ as well.
\end{proof}

\begin{remark}
Suppose $\mesh$ is a polygonal mesh with smoothness distribution $\bsmooth$.  Let $\bsmoothr$ be the smoothness distribution which is equal to $\bsmooth$ on every edge other than $\edge$, and satisfies $\bsmoothr(\edge)=-1$.  Suppose $\edge$ joins vertices $\vertex_1$ and $\vertex_2$.  Let $\Omega_i=r+\lceil \frac{r+1}{t_i-1}\rceil$, where $t_i=\min\{r+1,n_i\}$ and $n_i$ is the number of slopes incident at $\vertex_i$ ($i=1,2$).  If $\degreeu>\Omega_1+\Omega_2-2$, the proof of Proposition~\ref{prop:polygonal} shows that if $\quotientComplex^\bsmooth$ is lower acyclic then so is $\quotientComplex^\bsmoothr$.  This observation can be used to remove all smoothness requirements along arbitrary edges, as long as $\degreeu$ is large enough.
\end{remark}

\begin{example}[$C^1$ splines on a polygonal mesh with holes] Let $\mesh$ be the mesh depicted in Figure~\ref{fig:truncated_cube_schlegel}~(b).  For simplicity we assume the coordinates of the vertices in this figure are chosen generically (thus $\quotientComplex^\bsmooth$ is \gi{} for large $m$).  In this case~\cite{mcdonald_schenck_09} implies that $\dim \splSpace^1=\binom{m+2}{2}-20\binom{m}{2}+32\binom{m-1}{2}$ for $m\gg 0$.  The main result of~\cite{dipasquale2018dimension} implies that $\dim \splSpace^1=\binom{m+2}{2}-20\binom{m}{2}+32\binom{m-1}{2}$ for $m\ge 26$.  However, computations indicate that $\quotientComplex^\bsmooth$ is \gi{} for $m\ge 7$.  Trusting these computations,  Proposition~\ref{prop:polygonal} indicates that the pruned spline space over the mesh depicted in Figure~\ref{fig:truncated_cube_schlegel} (a) will satisfy the dimension formula $-4\binom{m}{2}+16\binom{m-1}{2}$ for $m\ge 7$.  (In fact, computations indicate that the pruned spline space satisfies this formula for $m\ge 4$.)
\end{example}

\begin{figure}[t]
	\centering
	\subcaptionbox{}[0.49\textwidth]{\begin{tikzpicture}[scale=3]
\tikzset{
	bThickness/.style={line width=#1\pgflinewidth},
	bThickness/.default={2},
}

\tikzset{
	eThickness/.style={line width=#1\pgflinewidth},
	eThickness/.default={0.5},
}

\begin{scope}
\coordinate (v0) at (0.92, 0.38) {};
\coordinate (v1) at (0.38, 0.92) {};
\coordinate (v2) at (-0.38, 0.92) {};
\coordinate (v3) at (-0.92, 0.38) {}; 
\coordinate (v4) at (-0.92, -0.38) {};
\coordinate (v5) at (-0.38, -0.92) {}; 
\coordinate (v6) at (0.38, -0.92) {};
\coordinate (v7) at (0.92, -0.38) {};
\coordinate (v8) at (0.54, 0.54) {};
\coordinate (v9) at (-0.52, 0.54) {};
\coordinate (v10) at (-0.52, -0.52) {};
\coordinate (v11) at (0.54, -0.52) {}; 
\coordinate (v12) at (0.31, 0.31) {}; 
\coordinate (v13) at (-0.26, 0.31) {}; 
\coordinate (v14) at (-0.26, -0.26) {};
\coordinate (v15) at (0.31, -0.26) {};
\coordinate (v16) at (0.24, 0.16) {}; 
\coordinate (v17) at (0.15, 0.24) {};
\coordinate (v18) at (-0.090, 0.24) {};
\coordinate (v19) at (-0.17, 0.16) {}; 
\coordinate (v20) at (-0.17, -0.088) {};
\coordinate (v21) at (-0.091, -0.17) {};
\coordinate (v22) at (0.15, -0.17) {};
\coordinate (v23) at (0.24, -0.089) {};

\draw[eThickness] (v0.center) -- (v1.center) -- (v8.center) -- cycle;
\draw[eThickness] (v2.center) -- (v3.center) -- (v9.center) -- cycle;
\draw[eThickness] (v4.center) -- (v5.center) -- (v10.center) -- cycle;
\draw[eThickness] (v6.center) -- (v7.center) -- (v11.center) -- cycle;
\draw[eThickness] (v15.center) -- (v22.center) -- (v23.center) -- cycle;
\draw[eThickness] (v12.center) -- (v16.center) -- (v17.center) -- cycle;
\draw[eThickness] (v13.center) -- (v18.center) -- (v19.center) -- cycle;
\draw[eThickness] (v14.center) -- (v20.center) -- (v21.center) -- cycle;
\draw[eThickness] (v0.center) -- (v8.center) -- (v12.center) -- (v16.center) -- (v23.center) -- (v15.center) -- (v11.center) -- (v7.center) -- cycle;
\draw[eThickness] (v1.center) -- (v2.center) -- (v9.center) -- (v13.center) -- (v18.center) -- (v17.center) -- (v12.center) -- (v8.center) -- cycle;
\draw[eThickness] (v3.center) -- (v4.center) -- (v10.center) -- (v14.center) -- (v20.center) -- (v19.center) -- (v13.center) -- (v9.center) -- cycle;
\draw[eThickness] (v5.center) -- (v6.center) -- (v11.center) -- (v15.center) -- (v22.center) -- (v21.center) -- (v14.center) -- (v10.center) -- cycle;

%\filldraw[fill=gray] (v16.center) -- (v17.center) -- (v18.center) -- (v19.center) -- (v20.center) -- (v21.center) -- (v22.center) -- (v23.center) -- cycle;
\draw[bThickness] (v16.center) -- (v17.center) -- (v18.center) -- (v19.center) -- (v20.center) -- (v21.center) -- (v22.center) -- (v23.center) -- cycle;
\draw[bThickness] (v0.center) -- (v1.center) -- (v2.center) -- (v3.center) -- (v4.center) -- (v5.center) -- (v6.center) -- (v7.center) -- cycle;
\end{scope}
\end{tikzpicture}}
	\subcaptionbox{}[0.49\textwidth]{\begin{tikzpicture}[scale=3]

\tikzset{
	bThickness/.style={line width=#1\pgflinewidth},
	bThickness/.default={2},
}

\tikzset{
	eThickness/.style={line width=#1\pgflinewidth},
	eThickness/.default={0.5},
}

\begin{scope}
\coordinate (v0) at (0.92, 0.38) {};
\coordinate (v1) at (0.38, 0.92) {};
\coordinate (v2) at (-0.38, 0.92) {};
\coordinate (v3) at (-0.92, 0.38) {}; 
\coordinate (v4) at (-0.92, -0.38) {};
\coordinate (v5) at (-0.38, -0.92) {}; 
\coordinate (v6) at (0.38, -0.92) {};
\coordinate (v7) at (0.92, -0.38) {};
\coordinate (v8) at (0.54, 0.54) {};
\coordinate (v9) at (-0.52, 0.54) {};
\coordinate (v10) at (-0.52, -0.52) {};
\coordinate (v11) at (0.54, -0.52) {}; 
\coordinate (v12) at (0.31, 0.31) {}; 
\coordinate (v13) at (-0.26, 0.31) {}; 
\coordinate (v14) at (-0.26, -0.26) {};
\coordinate (v15) at (0.31, -0.26) {};
\coordinate (v16) at (0.24, 0.16) {}; 
\coordinate (v17) at (0.15, 0.24) {};
\coordinate (v18) at (-0.090, 0.24) {};
\coordinate (v19) at (-0.17, 0.16) {}; 
\coordinate (v20) at (-0.17, -0.088) {};
\coordinate (v21) at (-0.091, -0.17) {};
\coordinate (v22) at (0.15, -0.17) {};
\coordinate (v23) at (0.24, -0.089) {};

\draw[eThickness] (v0.center) -- (v1.center) -- (v8.center) -- cycle;
\draw[eThickness] (v2.center) -- (v3.center) -- (v9.center) -- cycle;
\draw[eThickness] (v4.center) -- (v5.center) -- (v10.center) -- cycle;
\draw[eThickness] (v6.center) -- (v7.center) -- (v11.center) -- cycle;
\draw[eThickness] (v15.center) -- (v22.center) -- (v23.center) -- cycle;
\draw[eThickness] (v12.center) -- (v16.center) -- (v17.center) -- cycle;
\draw[eThickness] (v13.center) -- (v18.center) -- (v19.center) -- cycle;
\draw[eThickness] (v14.center) -- (v20.center) -- (v21.center) -- cycle;
\draw[eThickness] (v0.center) -- (v8.center) -- (v12.center) -- (v16.center) -- (v23.center) -- (v15.center) -- (v11.center) -- (v7.center) -- cycle;
\draw[eThickness] (v1.center) -- (v2.center) -- (v9.center) -- (v13.center) -- (v18.center) -- (v17.center) -- (v12.center) -- (v8.center) -- cycle;
\draw[eThickness] (v3.center) -- (v4.center) -- (v10.center) -- (v14.center) -- (v20.center) -- (v19.center) -- (v13.center) -- (v9.center) -- cycle;
\draw[eThickness] (v5.center) -- (v6.center) -- (v11.center) -- (v15.center) -- (v22.center) -- (v21.center) -- (v14.center) -- (v10.center) -- cycle;

\draw[bThickness, dashed] (v16.center) -- (v17.center) -- (v18.center) -- (v19.center) -- (v20.center) -- (v21.center) -- (v22.center) -- (v23.center) -- cycle;
\draw[bThickness] (v0.center) -- (v1.center) -- (v2.center) -- (v3.center) -- (v4.center) -- (v5.center) -- (v6.center) -- (v7.center) -- cycle;
\end{scope}
\end{tikzpicture}}\\
	\caption{A non-simply connected polygonal mesh is shown in (a).
		We can interpret the mesh in (a) as a pruned mesh once the smoothness across the dashed edges in (b) have been reduced to $-1$.
		In (b), reducing the smoothness to $-1$ decouples the faces inside the dashed region from those outside the dashed region.
		Then, the required spine space dimension can be computed from the one on (a) using Theorem \ref{thm:hole_dimension}, i.e., by subtracting the term $\dimwp{\PP_\face}$ for the polygon enclosed by the dashed edges.
		All domain boundaries have been displayed in bold.}
	\label{fig:truncated_cube_schlegel}
\end{figure}

\subsection{T-meshes}

Let us now present examples of applications to splines on T-meshes $\mesh$.
In particular, we will show how Corollary \ref{cor:new_dimension} can be combined with previously published results from \cite{mourrain2014dimension,toshniwal_polynomial_2019,toshniwal_mixed_2019} to compute the dimension of bi-degree splines in a very general setting by reducing the smoothness across one or more edges to $C^{-1}$.
Thereafter, Theorem \ref{thm:hole_dimension} will allow us to compute the dimension of the corresponding pruned spline spaces on T-meshes of arbitrary topologies.

T-meshes have a simpler structure 
%than triangulations, 
and as a result we can consider a more general setting than the one we discussed in the previous sub-sections.
More precisely, for $\face \in \meshF$, we will allow $\mbf{\degreeu}(\face) = \PP_\face$ to be the vector space of polynomials of bi-degree at most $(\degreeu_\face, \degreeu_\face)$ for some $\degreeu_\face \in \ZZP$, i.e.,
\begin{equation}
	\mbf{\degreeu}(\face) = \PP_\face = \PP_{\degreeu_\face\degreeu_\face}\;.
\end{equation}
Of course, we will assume that the assumptions placed on $\mbf{\degreeu}$ in Section \ref{sec:preliminaries} are still satisfied.
Then, following Equation \eqref{eq:edge_vertex_spaces}, we will also define 
\begin{equation}
	\degreeu_\edge := \max_{\overline{\face} \supset \edge} \degreeu_\face\;,\qquad
	\degreeu_\vertex := \max_{\overline{\face} \ni \vertex} \degreeu_\face\;.
\end{equation}
We will denote the sets containing vertical and horizontal interior edges of $\mesh$ with $\meshInteriorEV$ and $\meshInteriorEV$, respectively.
Finally, for a vertex $\vertex$, we define orders of smoothness $\bsmooth_h(\vertex)$ and $\bsmooth_v(\vertex)$ as follows,
\begin{equation*}
	\bsmooth_h(\vertex) := \min_{\substack{\edge \ni \vertex\\\edge \in \meshEV}} \bsmooth(\edge)\;,\qquad
	\bsmooth_v(\vertex) := \min_{\substack{\edge \ni \vertex\\\edge \in \meshEH}} \bsmooth(\edge)\;.
\end{equation*}
We start by defining the segments of the T-mesh as connected unions of horizontal or vertical edges that have the same associated $\degreeu_{\edge}$.
\begin{definition}[Segments of the T-mesh]
	Let $A \subseteq \meshInteriorE\cup\meshV$ be a finite set of either horizontal or vertical edges $\tau\in\meshInteriorE$,% together with their vertices $\gamma\in\tau$, such that
	\begin{itemize}
		\item $L_A := \cup_{\edge \in A}\edge$ is non-empty and connected,
		\item $\degreeu_\edge = \degreeu_{\edge'} =: \degreeu_A$ for any $\edge, \edge' \in A \cap \meshInteriorE$.
%		\item for each $\vertex \in A \cap \meshInteriorV$, there exists an edge $\overline{\edge}$ that is transversal to $L_A$ and such that
%		\begin{equation}
%			\degreeu_{\overline{\edge}} \geq \degreeu_A\;,\qquad
%			\bsmooth(\overline{\edge}) = 
%			\begin{dcases}
%				\bsmooth_h(\vertex)\;, & \edge \in \meshEV\;,\\
%				\bsmooth_v(\vertex)\;, & \edge \in \meshEH\;.
%			\end{dcases}
%		\end{equation}
	\end{itemize}
	Then $A$ will be called a (horizontal or vertical) segment.
\end{definition}

%\begin{rem}
%	For any segment $A$, the above definition implies that
%	\begin{equation}
%	\vertex \in \interior{L}_A \Rightarrow \degreeu_\vertex = \degreeu_A\;.
%	\end{equation}
%	In general, the same cannot be said about vertices $\vertex$ that form the end-points of $L_A$.
%\end{rem}

Lemma \ref{lem:tmesh_segment}, which will be presented shortly, identifies sufficient conditions allowing the smoothness across a segment of the T-mesh to be reduced while preserving \ginoun{}.
The next three results discuss the dimensions of spaces of univariate polynomials, and Example \ref{ex:univ_dim} presents an application of these results; the proof of Lemma \ref{lem:tmesh_segment} will use these results as well.

In the following, $\overline{\PP}_\degreeu$ is used to denote the vector space of univariate polynomials in variable $x$ of degree at most $\degreeu$; $\overline{\PP}_\degreeu := 0$ for $\degreeu < 0$.
Finally, for $I = \{1, \dots, k\}$ and some $a_i \in \RR$ and $d_i \in \ZZ_{\geq -1}$, $i \in I$, we define $\smin{I} \subseteq I$ to be the largest set such that
\begin{itemize}
	\item all $a_i$, $i \in \smin{I}$, are distinct;
	\item for each $i \in \smin{I}$, $d_i = \min \left\{ d_j~:~ a_i = a_j\;, j \in I  \right\}$.
\end{itemize}

\begin{lemma}[Proposition 1.8, Mourrain \cite{mourrain2014dimension}]
	\label{lem:apolar_uniform}
	For $i \in I = \{1, \dots, k\}$, let $a_i \in \RR$ and $d_i \in \ZZ_{\geq -1}$.
	Consider linear polynomials $\ell_i = x-a_i$, $i \in I$, and define the vector space $V$ as
	\begin{equation}
		V := \sum_{i \in I} \ell_i^{d_i}\overline{\PP}_{\degreeu-d_i}\;.
	\end{equation}
	Then, the dimension of $V$ is given by the following formula,
	\begin{equation}
		\dimwp{V} = \min\left( \degreeu+1,~\sum_{i \in \smin{I}} (\degreeu-d_i+1)_+ \right)\;.
	\end{equation}
\end{lemma}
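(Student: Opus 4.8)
The plan is to put the family of summands into a normal form, homogenise, and then invoke apolarity (Macaulay duality) for binary forms, which turns the \emph{sum} $V$ into an \emph{intersection} whose dimension is immediate.

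First I would remove the repetitions among the $a_i$. If $a_i=a_j$ with $d_i\le d_j$ then $\ell_i=\ell_j$ and $\ell_i^{d_i}\overline{\PP}_{\degreeu-d_i}\supseteq\ell_j^{d_j}\overline{\PP}_{\degreeu-d_j}$, so the $j$-th summand is redundant; iterating, $V=\sum_{i\in\smin{I}}\ell_i^{d_i}\overline{\PP}_{\degreeu-d_i}$ where the $a_i$, $i\in\smin{I}$, are pairwise distinct and each $d_i$ is minimal in its cluster --- which is precisely why $\smin{I}$ enters the formula. I would also discard indices with $d_i\ge\degreeu+1$ (both the summand and the term $(\degreeu-d_i+1)_+$ vanish), and observe that if some $d_i\le 0$ then $V=\overline{\PP}_\degreeu$ and both sides of the asserted identity equal $\degreeu+1$; so from here on the $a_i$ are distinct and $1\le d_i\le\degreeu$.

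Next I would homogenise: identify $\overline{\PP}_\degreeu$ with the space $W$ of binary forms of degree $\degreeu$ in $x,y$, taking $\ell_i=x-a_iy$, so that $\ell_i^{d_i}\overline{\PP}_{\degreeu-d_i}$ becomes the subspace of forms divisible by $\ell_i^{d_i}$. Equip $W$ with the apolarity pairing. The key fact I would use is that, for a nonzero linear form $\ell$ and $0\le d\le\degreeu$, the perpendicular in $W$ of $\{G\in W:\ell^{d}\mid G\}$ equals $\{G\in W:\tilde\ell^{\,\degreeu-d+1}\mid G\}$, where $\tilde\ell$ is the linear form apolar to $\ell$ (equivalently, $\ell(\partial_x,\partial_y)^{d}G=0$ iff $\tilde\ell^{\,\degreeu-d+1}\mid G$). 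This I would prove by $\mathrm{GL}_2$-equivariance of the pairing, reducing to $\ell=x$, so $\tilde\ell=y$, where it is the elementary statement that a degree-$\degreeu$ form killed by $\partial_x^{d}$ has $x$-degree $<d$, hence is divisible by $y^{\degreeu-d+1}$. Since distinct $a_i$ give linear forms $\ell_i$ of distinct slope, the apolar forms $\tilde\ell_i$ are pairwise non-proportional, hence pairwise coprime in the UFD $\RR[x,y]$.

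Taking perpendiculars then yields
\[
V^{\perp}=\bigcap_{i\in\smin{I}}\{G\in W:\tilde\ell_i^{\,\degreeu-d_i+1}\mid G\}=\Bigl\{G\in W:\textstyle\prod_{i\in\smin{I}}\tilde\ell_i^{\,\degreeu-d_i+1}\ \big|\ G\Bigr\},
\]
the second equality by coprimality. The divisor on the right has degree $D:=\sum_{i\in\smin{I}}(\degreeu-d_i+1)$, so $\dimwp{V^{\perp}}=\max(0,\degreeu+1-D)$ according as $D\le\degreeu$ or not, and hence
\[
\dimwp{V}=(\degreeu+1)-\dimwp{V^{\perp}}=\min\Bigl(\degreeu+1,\ \textstyle\sum_{i\in\smin{I}}(\degreeu-d_i+1)_{+}\Bigr),
\]
as claimed (the subscript $+$ reinstating the summands discarded above, which contribute $0$). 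The one genuinely non-formal step is the apolarity fact in the previous paragraph: setting up the pairing and verifying that the perpendicular of a divisibility subspace is again a divisibility subspace; everything else is bookkeeping. One could instead run inclusion--exclusion over subsets $S\subseteq\smin{I}$ using $\bigcap_{i\in S}\ell_i^{d_i}\overline{\PP}_{\degreeu-d_i}=\bigl(\prod_{i\in S}\ell_i^{d_i}\bigr)\overline{\PP}_{\degreeu-\sum_{i\in S}d_i}$, but that route additionally requires showing these subspaces generate a distributive lattice, so the dual argument is the cleaner one.
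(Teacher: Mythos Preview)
The paper does not give its own proof of this lemma: it is stated with attribution to Mourrain~\cite{mourrain2014dimension}, Proposition~1.8, and no argument is supplied. So there is nothing in the paper to compare your approach against.

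Your apolarity argument is correct. The reduction to the index set $\smin{I}$ is exactly the right first step, and the homogenisation plus Macaulay duality cleanly turns the sum into the intersection $\bigcap_i \tilde\ell_i^{\,\degreeu-d_i+1}S_{d_i-1}$. The only point worth spelling out a touch more carefully is the identification of $(\ell^d S_{\degreeu-d})^\perp$ under the pairing $\langle F,G\rangle=F(\partial_x,\partial_y)G$: one has $\langle \ell^d H,G\rangle=H(\partial)\bigl(\ell(\partial)^dG\bigr)$, so the perpendicular is $\ker\bigl(\ell(\partial)^d\colon S_\degreeu\to S_{\degreeu-d}\bigr)$; then for $\ell=x-ay$ one checks $(\partial_x-a\partial_y)(ax+y)=0$, whence $(ax+y)^{\degreeu-d+1}S_{d-1}$ lies in the kernel and equals it by a dimension count. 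This makes the choice $\tilde\ell_i=a_ix+y$ explicit and confirms that distinct $a_i$ give pairwise coprime $\tilde\ell_i$, after which the rest is indeed bookkeeping.
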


\begin{lemma}
	\label{lem:apolar_nonuniform}
	For $i \in I = \{1, \dots, k\}$, let $a_i \in \RR$, $d_i \in \ZZ_{\geq -1}$ and $e_i \in \ZZP$.
	Consider linear polynomials $\ell_i = x-a_i$, $i \in I$, and define the vector space $V$ as below,
	\begin{equation}
		V := \sum_{i \in I} \ell_i^{d_i}\overline{\PP}_{\degreeu-d_i-e_i}\;.
	\end{equation}
	Then, the dimension of $V$ is given by the following formula,
	\begin{equation}
	\begin{split}
		\dimwp{V} = \sum_{j=0}^d
		&\left[
		\min\left( \degreeu-e^{j+1}+1,~\sum_{i \in \smin{I^j}} (\degreeu-e^{j+1}-d_i+1)_+ \right) \right.\\
		&\qquad\quad \left.- \min\left( \degreeu-e^{j}+1,~\sum_{i \in \smin{I^j}} (\degreeu-e^{j}-d_i+1)_+ \right)
		\right]\;,
	\end{split}
	\end{equation}
	where we use the following definitions,
	\begin{equation}
	\begin{split}
	&E := \{ e_1, \dots, e_k \}\;,\;
	d := \#E-1\;,\\
	&e^j := \begin{dcases}
	\degreeu+1\;,& j = 0\;,\\
	\max E \backslash \{e^0,\dots,e^{j-1}\}\;, & j=1, \dots, d+1,
	\end{dcases}\\
	& I^j := \big\{  i \in I~:~e_i < e^j \big\}\;,\; j= 0, \dots, d+1\;.
	\end{split}
	\end{equation}
%	\begin{equation}
%		\dimwp{V} = \sum_{j=0}^d \dimwp{V^j}\;, % \sum_{e=e^{l+1}}^{e^l-1} 
%	\end{equation}
%	where
%	\begin{equation}
%		\begin{split}
%			V^j := 
%			\sum_{i \in I^j} \ell_i^{d_i} \overline{\PP}_{\degreeu-e^{j+1}-d_i}
%			\big/
%			\sum_{i \in I^j} \ell_i^{d_i} \overline{\PP}_{\degreeu-e^{j}-d_i}\;.
%		\end{split}
%	\end{equation}
\end{lemma}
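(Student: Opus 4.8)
The plan is to reduce the statement to the uniform case, Lemma~\ref{lem:apolar_uniform}, by a filtration of $V$ by degree that separates the contributions of the different values of the $e_i$; equivalently, this is an induction on $\#E$ in which one level of exponents is peeled off at a time. Keep the notation of the statement: $e^0=\degreeu+1>e^1>\dots>e^{d+1}=\min E$ are $\degreeu+1$ together with the distinct values of $\{e_i\}$ in decreasing order, $n^j:=\degreeu-e^j$ (so $n^0=-1<n^1<\dots<n^{d+1}$), and $I^j=\{i:e_i<e^j\}=\{i:\degreeu-e_i>n^j\}$, a decreasing chain with $I^0=I$ and $I^{d+1}=\emptyset$. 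For $J\subseteq I$ and $n\in\ZZ$ write $W_J(n):=\sum_{i\in J}\ell_i^{d_i}\overline{\PP}_{n-d_i}$; this is exactly the kind of space handled by Lemma~\ref{lem:apolar_uniform} (with the convention that a generator with $d_i=-1$ contributes all of $\overline{\PP}_n$).

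First I would introduce $F_j:=\sum_{i\in I}\ell_i^{d_i}\overline{\PP}_{\min(\degreeu-e_i,\,n^j)-d_i}$, the subspace obtained from $V$ by lowering every generator's degree window to have bound $\le n^j$. Then $F_0=0$, $F_{d+1}=V$, the $F_j$ increase, and $\dim(V)=\sum_{j=0}^{d}\big(\dim(F_{j+1})-\dim(F_j)\big)$. Passing from cap $n^j$ to cap $n^{j+1}$ only enlarges the windows of the generators with $\degreeu-e_i>n^j$, i.e.\ those with $i\in I^j$, and their enlarged window is capped at $n^{j+1}$; hence $F_{j+1}=F_j+W_{I^j}(n^{j+1})$, so that
\[
\dim(F_{j+1})-\dim(F_j)=\dim\big(W_{I^j}(n^{j+1})\big)-\dim\big(F_j\cap W_{I^j}(n^{j+1})\big).
\]

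Second, the crux is to identify $F_j\cap W_{I^j}(n^{j+1})$ with $W_{I^j}(n^j)$ (the inclusion $\supseteq$ is immediate since $W_{I^j}(n^j)\subseteq F_j$ and $W_{I^j}(n^j)\subseteq W_{I^j}(n^{j+1})$). Granting this, each bracket equals $\dim(W_{I^j}(n^{j+1}))-\dim(W_{I^j}(n^j))$, and two applications of Lemma~\ref{lem:apolar_uniform} to the index set $\smin{I^j}$ --- at degree $n^{j+1}=\degreeu-e^{j+1}$ and at degree $n^j=\degreeu-e^j$ --- turn it into the $j$-th summand of the claimed formula, and summing over $j$ finishes the proof. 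I would establish the needed identification by homogenizing: substituting $x\mapsto x/y$ and clearing denominators identifies $\overline{\PP}_\degreeu$ with $\RR[x,y]_\degreeu$ and $\ell_i^{d_i}\overline{\PP}_{\degreeu-d_i-e_i}$ with the degree-$\degreeu$ part of the ideal $(y^{e_i}(x-a_iy)^{d_i})$, so $\dim(V)$ becomes the value at $\degreeu$ of the Hilbert function of the sum of these ideals, the degree filtration above becomes the $y$-adic filtration, and the required equality follows from the good behaviour of ideals of $\RR[x,y]$ (a two-variable polynomial ring, hence of projective dimension $\le 1$) under that filtration, using that $n^j$ and $n^{j+1}$ are consecutive degree caps.

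The main obstacle is exactly this last step, the identification of the graded pieces of the filtration: the naive ``cap-lowering'' identity $W_J(n)\cap\overline{\PP}_{n'}=W_J(n')$ is \emph{not} true in general, so one must genuinely exploit that the generators outside $I^j$ which sit inside $F_j$ have degree cap equal to $n^j$ (not smaller), that $n^j$ and $n^{j+1}$ are adjacent in the list of caps, and --- whenever some $d_i=0$, so that $\ell_i^{d_i}\overline{\PP}_{\degreeu-e_i}=\overline{\PP}_{\degreeu-e_i}$ is a ``full'' window that would otherwise inflate the overlap $F_j\cap W_{I^j}(n^{j+1})$ --- that such generators be absorbed into the appropriate lower stratum before running the telescoping. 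Everything else (the telescoping itself and the two invocations of Lemma~\ref{lem:apolar_uniform}) is routine bookkeeping.
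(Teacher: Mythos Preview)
The paper's own proof is not a detailed argument: it simply invokes Lemma~\ref{lem:apolar_uniform} together with \cite[Lemma~4.5]{toshniwal2019polynomial}, so there is little to compare against at the level of strategy. Your filtration-and-telescoping plan is natural and, were it to go through, would indeed produce exactly the displayed formula via two applications of Lemma~\ref{lem:apolar_uniform} per bracket.

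The gap is precisely where you locate it, but it is fatal rather than a technicality: the identity $F_j\cap W_{I^j}(n^{j+1})=W_{I^j}(n^j)$ fails in general, and the appeal to ``good behaviour of ideals of $\RR[x,y]$ under the $y$-adic filtration'' does not rescue it. Take $m=3$, $I=\{1,2,3\}$, $(a_1,a_2,a_3)=(0,1,2)$, $d_1=d_2=d_3=2$, $(e_1,e_2,e_3)=(0,1,0)$. Then $n^1=2$, $n^2=3$, $I^1=\{1,3\}$; one checks directly that
\[
F_1=x^2\overline{\PP}_0+(x-1)^2\overline{\PP}_0+(x-2)^2\overline{\PP}_0=\overline{\PP}_2,\qquad
W_{I^1}(3)=x^2\overline{\PP}_1+(x-2)^2\overline{\PP}_1=\overline{\PP}_3,
\]
so $F_1\cap W_{I^1}(3)=\overline{\PP}_2$ has dimension $3$, whereas $W_{I^1}(2)=x^2\overline{\PP}_0+(x-2)^2\overline{\PP}_0$ has dimension $2$. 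Here no $d_i$ is zero, the single generator outside $I^1$ has cap exactly $n^1$, and $n^1,n^2$ are consecutive---so none of the safeguards you list are violated, yet the identification still breaks. Your telescoping sum then returns $3+2=5$, while in fact $V=\overline{\PP}_3$ has dimension $4$. Since the displayed formula itself evaluates to $5>m+1$ on this input, the obstruction is not only in your argument but in the statement as written; whatever additional hypotheses or corrections the cited reference supplies, the intersection step will need a genuinely different treatment than the one you sketch.
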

\begin{proof}
	The proof follows from Lemma \ref{lem:apolar_uniform} and \cite[Lemma 4.5]{toshniwal2019polynomial}, where an analogous claim was shown for bivariate polynomials.
\end{proof}

\begin{corollary}\label{cor:apolar_full_ring}
	For $i \in I = \{1, \dots, k\}$, let $a_i \in \RR$, $d_i \in \ZZ_{\geq -1}$ and $e_i \in \ZZP$.
	Consider linear polynomials $\ell_i = x-a_i$, $i \in I$, and define the vector space $V$ as below,
	\begin{equation}
	V := \sum_{i \in I} \ell_i^{d_i}\overline{\PP}_{\degreeu-d_i-e_i}\;.
	\end{equation}
	If $\dimwp{V} = \degreeu+1$, then $V = \overline{\PP}_\degreeu$.
\end{corollary}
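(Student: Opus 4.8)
The plan is to deduce the statement from the trivial inclusion $V \subseteq \overline{\PP}_\degreeu$ combined with the dimension hypothesis, so that the argument reduces to a one-line linear-algebra observation once the dimension formula of Lemma \ref{lem:apolar_nonuniform} is in hand.

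First I would check that $V$ sits inside $\overline{\PP}_\degreeu$. Each generating subspace $\ell_i^{d_i}\overline{\PP}_{\degreeu-d_i-e_i}$ consists of polynomials of degree at most $d_i + (\degreeu - d_i - e_i) = \degreeu - e_i \leq \degreeu$, since $e_i \in \ZZP$; in the degenerate case $\degreeu - d_i - e_i < 0$ the convention $\overline{\PP}_n = 0$ for $n < 0$ makes the summand the zero space, and reading the exponent $d_i = -1$ consistently with Lemmas \ref{lem:apolar_uniform} and \ref{lem:apolar_nonuniform} again yields a subspace of $\overline{\PP}_\degreeu$. Being a sum of subspaces of $\overline{\PP}_\degreeu$, the space $V$ is itself a subspace of $\overline{\PP}_\degreeu$.

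Then, since $\dimwp{\overline{\PP}_\degreeu} = \degreeu + 1$ and, by hypothesis, $\dimwp{V} = \degreeu + 1$, a finite-dimensional subspace whose dimension equals that of the ambient space must coincide with it; hence $V = \overline{\PP}_\degreeu$, as claimed.

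I do not expect a genuine obstacle here: the real content has already been absorbed into the proof of the dimension formula in Lemma \ref{lem:apolar_nonuniform}, and what remains is the elementary containment-plus-dimension step above. The only point warranting a moment's care is the bookkeeping for the degenerate summands (empty degree ranges and the $d_i = -1$ case), which I would dispatch using the conventions fixed earlier in the section.
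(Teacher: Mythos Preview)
Your proposal is correct and matches the paper's treatment: the paper states this corollary without proof, taking it as immediate from the obvious containment $V \subseteq \overline{\PP}_\degreeu$ together with the dimension hypothesis, which is exactly the argument you spell out.
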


\begin{example}\label{ex:univ_dim}
	As an illustration of Lemma \ref{lem:apolar_nonuniform}, let us consider the vector space $V$ defined by choosing
	\begin{equation}
	\begin{split}
		&\degreeu = 3\;,\;\;
		I = \{1, 2, 3, 4\}\;,\;\;
		(d_1, d_2, d_3, d_4) = (3, 2, 3, 3)\;,\;\;\\
		&(e_1, e_2, e_3, e_4) = (0, 1, 0, 0)\;,\;\;
		(a_1, a_2, a_3, a_4) = (-1, 0, 0, 1)\;.
	\end{split}
	\end{equation}
	That is, we choose $V$ as the following vector space, where $\ell_i = x - a_i$,
	\begin{equation}
		V = \ell_1^3 \overline{\PP}_{0} + \ell_2^2 \overline{\PP}_{0} + \ell_3^3 \overline{\PP}_{0} + \ell_4^3 \overline{\PP}_{0}\;.
	\end{equation}
	Then, following the definitions in Lemma \ref{lem:apolar_nonuniform}, we have
	\begin{equation}
	\begin{split}
		&E = \{0, 1\}\;,\;\;
		d = 1\;,\;\;
		(e^0,e^1, e^2) = (4, 1,  0)\;,\\
		& I = \{1, 2, 3, 4\}\;,\;\;
		I^0 = I\;,\;\;
		I^1 = \{ 1, 3, 4 \}\;,\;\;
		I^2 =\emptyset\;.
	\end{split}
	\end{equation}
%	Then, we define $V^i$ as below --- their dimensions are computed using Lemma \ref{lem:apolar_uniform},
%	\begin{equation}
%	\begin{split}
%		& V^0 = \sum_{i \in I^0} \ell_i^{d_i} \overline{\PP}_{2-d_i}\;,\;\;
%		V^1 = \sum_{i \in I^1} \ell_i^{d_i} \overline{\PP}_{3-d_i} / \sum_{i \in I^1} \ell_i^{d_i} \overline{\PP}_{2-d_i}\;,\\
%		&\dimwp{V^0} = \min ( 3, 1) = 1\;,\;\;
%		\dimwp{V^1} = \min ( 4, 3) - \min(3, 0) = 3\;.
%	\end{split}
%	\end{equation}
	Therefore, we see that,
	\begin{equation}
		M(I^0) = \{1, 2, 4\}\;,\;\;
		M(I^1) = \{1, 3, 4\}\;,
	\end{equation}
	and the dimension of $V$ follows as
	\begin{equation}
	\small
	\begin{split}
		\dimwp{V} 
		&= \min\left( 3-1+1,~\sum_{i \in \smin{I^0}} (3-1-d_i+1)_+ \right) - \min\left( 3-4+1,~\sum_{i \in \smin{I^0}} (3-4-d_i+1)_+ \right)\\
		& + \min\left( 3-0+1,~\sum_{i \in \smin{I^1}} (3-0-d_i+1)_+ \right) - \min\left(3-1+1,~\sum_{i \in \smin{I^1}} (3-1-d_i+1)_+ \right)\\
		&= (1-0) + (3-0) = 4 = \dimwp{\overline{\PP}_3}\;,
	\end{split}
	\end{equation}
	so that $V = \overline{\PP}_3$.
\end{example}

\begin{definition}[Weight of a segment]\label{def:segment_weight}
	Given a segment $A$, define the set $T$ as
	\begin{equation}
		T = \left\{ \overline{\edge} \in\meshE~:~ \overline{\edge} \text{ intersects } L_A \text{ trasversally}\right\}\;.
	\end{equation}
	Let $a_{\overline{\edge}}$ be the horizontal (resp. vertical) coordinate for the vertical (resp. horizontal) edge $\overline{\edge}$.
	Then, the weight of $A$, $\omega^\bsmooth(A)$, is defined as
	\begin{equation*}
	\omega^\bsmooth(A) := 
		\dimwp{\sum_{\overline{\edge} \in T} (x - a_{\overline{\edge}})^{\bsmooth(\overline{\edge})+1}\overline{\PP}_{\degreeu_A - \max(0,\degreeu_A-\degreeu_{\overline{\edge}})-\bsmooth(\overline{\edge})-1}}\;.
	\end{equation*}
	Note that $\omega^\bsmooth(A)$ can be computed by a direct application of Lemma \ref{lem:apolar_nonuniform}.
\end{definition}

\begin{lemma}\label{lem:tmesh_segment}
	Let $\bsmooth$ be such that $\quotientComplex^\bsmooth$ is \gi{}, and consider a segment $A$.
	Let the smoothness distribution $\bsmoothr$ be defined as follows,
	\[\bsmoothr(\edge)=\begin{cases}
	\bsmooth(\edge)\; & \text{for $\edge \notin A$},\\
	\smooth \; & \text{otherwise}, 
	\end{cases}\]
	where $r\leq \bsmooth(\tau)$ for all for all $\edge \in A$, $\smooth \in \ZZ_{\geq -1}$.
	If either one of the following two requirements is satisfied,
	\begin{enumerate}[label=(\alph*)]
		\item $A \subsetneq B$ for some segment $B$,  and $\bsmoothr(\edge) \leq \smooth$ for all $\edge \in B$,
		\item $\omega^{\bsmoothr}(A) = \degreeu_A+1$,
	\end{enumerate}
	then $\quotientComplex^\bsmoothr$ is also \gi{}.
\end{lemma}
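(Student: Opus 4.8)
The plan is to invoke Corollary~\ref{cor:new_dimension}, which reduces the whole statement to showing that $H_0(\idealComplex^{\bsmoothr}/\idealComplex^{\bsmooth}) = 0$. Since $\idealComplex^{\bsmoothr}/\idealComplex^{\bsmooth}$ has a single nontrivial differential, call it $\phi$, this in turn amounts to proving that $\phi$ is surjective. By Lemma~\ref{lem:support}, $\phi$ only involves the edges $\edge\in A$ with $r<\bsmooth(\edge)$ together with their interior vertices, all of which lie on the line $L_A$; so $\idealComplex^{\bsmoothr}/\idealComplex^{\bsmooth}$ is (a direct summand of) the simplicial chain complex of the path $L_A$ with polynomial coefficients, and $\phi$ is the corresponding boundary map.

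The next step is to make $\phi$ explicit. After a translation we may assume $A$ is horizontal and $L_A\subset\{y=0\}$. Then each $\edge\in A$ has $\ell_\edge=y$ and ambient space $\PP_{\degreeu_A\degreeu_A}$, so its edge term is $\ideal^{\bsmoothr}_\edge/\ideal^{\bsmooth}_\edge \isomorphic \langle y^{\smooth+1}\rangle/\langle y^{\bsmooth(\edge)+1}\rangle$ (multiples of $y^{\smooth+1}$ modulo multiples of $y^{\bsmooth(\edge)+1}$). At an interior vertex $\vertex$ on $L_A$ the incident edges split into the at most two edges of $A$ through $\vertex$ and the transversal edges $\overline{\edge}\in T$ through $\vertex$; the transversal smoothnesses are untouched by the reduction, so $\ideal^{\bsmooth}_\vertex = \langle y^{\bsmooth_v(\vertex)+1}\rangle + J_\vertex$ and $\ideal^{\bsmoothr}_\vertex = \langle y^{\smooth+1}\rangle + J_\vertex$, with the common transversal part $J_\vertex = \sum_{\overline{\edge}\ni\vertex}\langle (x-a_{\overline{\edge}})^{\bsmooth(\overline{\edge})+1}\rangle$ sitting inside $\PP_{\degreeu_\vertex\degreeu_\vertex}$. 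The differential $\phi$ sends an edge element to the difference of its two endpoint images, exactly as for a path graph.

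For hypothesis (b), I would expand every element as a polynomial in $y$ with coefficients in $\overline{\PP}$; the complex then decomposes as a direct sum over the $y$-exponent $e$. In exponent $e\ge\smooth+1$ the vertex term at $\vertex$ is killed as soon as $e\ge\bsmooth_v(\vertex)+1$, and otherwise it is a quotient of $\overline{\PP}_{\degreeu_\vertex}$ by the image of the incident edge terms together with the univariate reduction of $J_\vertex$. Accounting for the degree drop $\max(0,\degreeu_A-\degreeu_{\overline{\edge}})$ incurred in passing from $\PP_{\degreeu_\vertex\degreeu_\vertex}$ down to $\PP_{\degreeu_A\degreeu_A}$, surjectivity of $\phi$ in each exponent will follow once $\sum_{\overline{\edge}\in T}(x-a_{\overline{\edge}})^{\bsmooth(\overline{\edge})+1}\,\overline{\PP}_{\degreeu_A-\max(0,\degreeu_A-\degreeu_{\overline{\edge}})-\bsmooth(\overline{\edge})-1}$ exhausts $\overline{\PP}_{\degreeu_A}$. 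That sum is precisely the space defining $\omega^{\bsmoothr}(A)$ in Definition~\ref{def:segment_weight}, so hypothesis (b) together with Corollary~\ref{cor:apolar_full_ring} yields the equality $\sum(\cdots)=\overline{\PP}_{\degreeu_A}$, hence $\phi$ is onto and $H_0(\idealComplex^{\bsmoothr}/\idealComplex^{\bsmooth})=0$.

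For hypothesis (a), let $\edge'\in B\setminus A$ be a collinear edge adjacent to $A$; since $\bsmoothr(\edge')=\bsmooth(\edge')\le\smooth$, the vertex $\vertex$ shared by $\edge'$ and $A$ already has $y$-truncation level $\le\smooth$, so $\ideal^{\bsmoothr}_\vertex=\ideal^{\bsmooth}_\vertex$ and $\vertex$ drops out of the support. The supported sub-path of $L_A$ therefore has a free end, and cascading along the path from that end shows $\phi$ is surjective term by term with no arithmetic condition needed. I expect the main obstacle to be the decoupling in case (b): one has to justify the splitting over $y$-exponents carefully and, at each vertex, verify that modulo the images of the incident edge terms the vertex term is exactly the univariate cokernel whose nonvanishing is measured by the weight of Definition~\ref{def:segment_weight}, so that Lemmas~\ref{lem:apolar_uniform}--\ref{lem:apolar_nonuniform} and the combinatorics of $\smin{\cdot}$ can be brought to bear — this is the step where the interaction between the collinear contribution $\langle y^{\smooth+1}\rangle/\langle y^{\bsmooth_v(\vertex)+1}\rangle$ and the transversal contribution $J_\vertex$ must be untangled.
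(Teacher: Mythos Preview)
Your proposal is correct and follows the same route as the paper: reduce to $H_0(\idealComplex^{\bsmoothr}/\idealComplex^{\bsmooth})=0$ via Corollary~\ref{cor:new_dimension}, observe via Lemma~\ref{lem:support} that the quotient complex is supported on the path $L_A$, and then treat cases (a) and (b) separately.  For (a) your ``free end plus cascade'' argument is exactly the paper's: the adjacent edge $\edge'\in B\setminus A$ forces $\ideal^{\bsmoothr}_{\vertex_0}=\ideal^{\bsmooth}_{\vertex_0}$ at the shared endpoint, and one telescopes along the path from there.

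For (b) there is a minor organizational difference.  You propose to split the complex by the $y$-exponent and then, at each level, invoke the univariate span condition.  The paper instead applies Corollary~\ref{cor:apolar_full_ring} once to obtain the bivariate identity
\[
\ideal^{\bsmoothr}_A \;=\; \ell_A^{\smooth+1}\sum_{\overline{\edge}\in T}\ell_{\overline{\edge}}^{\bsmoothr(\overline{\edge})+1}\,\PP_{(\degreeu_A-\bsmoothr(\overline{\edge})-1)(\degreeu_A-\smooth-1)},
\]
so that every element of $\ideal^{\bsmoothr}_{\vertex_i}$ is (modulo $\ideal^{\bsmooth}_{\vertex_i}$) a sum of pieces each lying in some $\ideal^{\bsmooth}_{\overline{\edge}}\subset\ideal^{\bsmooth}_{\vertex_{j(\overline{\edge})}}$; transporting each such piece along the path from $\vertex_{j(\overline{\edge})}$ to $\vertex_i$ exhibits it in $\im\partial$.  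Your exponent-by-exponent plan reaches the same conclusion but with more bookkeeping.  The two obstacles you anticipate are not real: the transversal part $J_\vertex$ already sits inside $\ideal^{\bsmooth}_\vertex$, so there is nothing to ``untangle'' between collinear and transversal contributions; and since each $\ell_{\overline{\edge}}$ depends only on $x$, the ideals $J_\vertex$ are automatically homogeneous in $y$, so the exponent splitting is immediate.
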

\begin{remark}
	Since the weight of a segment $A$ depends only on the smoothness of edges transversal to $A$, reducing the smoothness across $A$ does not change its weight.
	That is, in the statement of Lemma \ref{lem:tmesh_segment}, we always have $\omega^{\bsmoothr}(A) = \omega^{\bsmooth}(A)$.
\end{remark}
\begin{proof}
	Using Lemma \ref{lem:support}, we can study the \ginoun{} of $\quotientComplex^\bsmoothr$ by studying $H_0(\idealComplex^{\bsmoothr} / \idealComplex^{\bsmooth})$ on the segment $A$.
	This is essentially a one dimensional problem.
	Consider then the horizontal segment $A$ as shown below; the proof for vertical segments is analogous.
	
	\begin{center}
		\begin{tikzpicture}[scale=2]
		\node[circle,fill=black,black,inner sep=0pt,minimum size=3pt] (c0) at (0,0) {};
		\node[circle,fill=black,black,inner sep=0pt,minimum size=3pt] (c1) at (1,0) {};
		\node[circle,fill=black,black,inner sep=0pt,minimum size=3pt] (c2) at (2,0) {};
		\node[circle,fill=black,black,inner sep=0pt,minimum size=3pt] (c3) at (4,0) {};
		\node[circle,fill=black,black,inner sep=0pt,minimum size=3pt] (c4) at (5,0) {};
		\draw[thin] (c0) -- (c1) -- (c2);
		\draw[thin, densely dashed] (c2) -- (c3);
		\draw[thin] (c3) -- (c4);
		\node[below] at ($(c0)!0.5!(c1)$) {$\edge_1$};
		\node[below] at ($(c1)!0.5!(c2)$) {$\edge_2$};
		\node[below] at ($(c3)!0.5!(c4)$) {$\edge_k$};		
		\node[above] at (c0) {$\vertex_0$};
		\node[above] at (c1) {$\vertex_1$};
		\node[above] at (c2) {$\vertex_2$};
		\node[above] at (c3) {$\vertex_{k-1}$};
		\node[above] at (c4) {$\vertex_k$};
		\end{tikzpicture}
	\end{center}
	
	$A$ contains the edges $\edge_1, \dots, \edge_k\in\meshInteriorE $ and the vertices $\vertex_0, \vertex_1, \dots, \vertex_k$.
	By definition, $A$ contains at least two different vertices, and $\degreeu_{\edge} =: \degreeu_A$ for all edges $\edge \in A$.
	Let $T$ be the set from Definition \ref{def:segment_weight}, i.e., the set containing all vertical edges that intersect $L_A$.
	Note that $\idealComplex^{\bsmoothr} / \idealComplex^{\bsmooth}$ is not supported on any $\overline{\edge} \in T$.
	
	When condition (a) is satisfied, the proof is very simple for the following reason.
	Firstly, since $A \subsetneq B$, we must have $\degreeu_A = \degreeu_B$ from the definition of segments.
	Without loss of generality, let $\vertex_0 \in \interior{L}_B \cap \boundary L_A$.
	Then, $\ideal^\bsmooth_{\vertex_0} = \ideal^\bsmoothr_{\vertex_0}$ and, as a consequence, $\idealComplex^\bsmoothr / \idealComplex^\bsmooth$ is not supported on $\vertex_0$.
	Every element of $\ideal^\bsmoothr_{\vertex_i}$ can be expressed as a sum of elements of $\ideal^\bsmoothr_{\overline{\edge}}$, $\vertex_i \in \overline{\edge} \in T$, and $\ideal^\bsmoothr_{\vertex_0}$.
	As a result, $H_0(\idealComplex^\bsmoothr / \idealComplex^\bsmooth)$ vanishes.
	
	Let us now examine condition (b).
	Let $\ell_A$ be a non-zero linear polynomial that vanishes on $L_A$.
	By definition, for all $\edge \in A$,
	\begin{equation}
	\ideal_{\edge}^{\bsmoothr} = \bigl\{ \ell_A^{\smooth+1}f \colon f\in \PP_{\degreeu_A(\degreeu_A-\smooth-1)}\bigr\} =: \ideal_A^\bsmoothr\;.
	\end{equation}
	Let $\ell_{\overline{\edge}}$ be a non-zero linear polynomial that vanishes on vertical edge $\overline{\edge} \in T$.
	Since $\omega^\bsmoothr(A) = \degreeu_A+1$, we can use Corollary \ref{cor:apolar_full_ring} to write
	\begin{equation*}
		\ideal_A^{\bsmoothr}
			= \ell_A^{r+1}\sum_{\overline{\edge}\in T} \ell_{\overline{\edge}}^{\bsmoothr(\overline{\edge})+1}\PP_{(\degreeu_A-\bsmoothr(\overline{\edge})-1)(\degreeu_A-r-1)}\;.
	\end{equation*}
	Then, for any $i$, any element of $\ideal_{\vertex_i}^\bsmoothr$ can be written as the sum of elements of $\ideal_{\overline{\edge}}^\bsmoothr$, $\overline{\edge} \in T$.
	Since $\idealComplex^{\bsmoothr} / \idealComplex^\bsmooth$ is not supported on any $\overline{\edge}$, $H_0(\idealComplex^{\bsmoothr} / \idealComplex^\bsmooth) = 0$.
	
	For both conditions (a) and (b), the claim of \ginoun{} of $\quotientComplex^\bsmoothr$ follows from the above and Corollary \ref{cor:new_dimension}.
\end{proof}

Lemma \ref{lem:tmesh_segment} discusses the setting when the smoothness is reduced across a single segment of the mesh.
Its successive applications can help us compute the dimension of a large class of splines on $\mesh$ with mixed smoothness.
The next result is immediate and is completely analogous to Lemma \ref{lem:tri_edge} which was shown for triangulations.
Its statement is simple: if an edge intersects another edge, and the order of smoothness across the latter is $-1$, then we can also reduce the order of smoothness across the former to $-1$ while preserving \ginoun{}.

\begin{lemma}\label{lem:tmesh_edge}
	Let $\bsmooth$ be such that $\quotientComplex^\bsmooth$ is \gi{}, and consider an interior edge $\edge \in \meshInteriorE$.
	If $\edge'$ is another edge such that $\edge \cap \edge'$ is not empty, $\bsmooth(\edge') = -1$, and $\degreeu_{\edge'} \geq \degreeu_\edge$, then $\quotientComplex^\bsmoothr$ is \gi{} where
	\begin{equation}
		\bsmoothr(\edge'')=
		\begin{cases}
			\bsmooth(\edge'')\; & \edge'' \neq \edge,\\
			-1 \; & \text{otherwise}.
		\end{cases}
	\end{equation}
\end{lemma}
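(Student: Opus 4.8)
The plan is to mirror the proof of Lemma~\ref{lem:tri_edge}: by Corollary~\ref{cor:new_dimension} it suffices to show $H_0(\idealComplex^\bsmoothr/\idealComplex^\bsmooth)=0$, and since $\bsmoothr$ and $\bsmooth$ agree away from the single edge $\edge$, Lemma~\ref{lem:support} tells us that $\idealComplex^\bsmoothr/\idealComplex^\bsmooth$ is supported only on $\edge$ and on the interior endpoints of $\edge$, so the computation is essentially one-dimensional. Write $\vertex$ for the endpoint of $\edge$ lying on $\edge'$ and $\vertex'$ for the other endpoint.

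First I would check that the complex $\idealComplex^\bsmoothr/\idealComplex^\bsmooth$ vanishes at $\vertex$. If $\vertex$ is a boundary vertex there is nothing to do, since then $\vertex \notin {\mesh}_0^\bsmoothr$. If $\vertex$ is interior, then because $\bsmooth(\edge')=-1$ we have $\ideal^\bsmooth_{\edge'}=\PP_{\edge'}$, and the hypothesis $\degreeu_{\edge'}\geq\degreeu_\edge$ gives $\PP_{\edge'}\supseteq\PP_\edge=\ideal^\bsmoothr_\edge$; hence $\ideal^\bsmoothr_\vertex=\ideal^\bsmoothr_\edge+\sum_{\edge''\ni\vertex,\,\edge''\neq\edge}\ideal^\bsmooth_{\edge''}=\sum_{\edge''\ni\vertex,\,\edge''\neq\edge}\ideal^\bsmooth_{\edge''}=\ideal^\bsmooth_\vertex$, so the cokernel of the inclusion $\idealComplex^\bsmooth\hookrightarrow\idealComplex^\bsmoothr$ is zero at $\vertex$.

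Consequently $\idealComplex^\bsmoothr/\idealComplex^\bsmooth$ collapses to the single map $\ideal^\bsmoothr_\edge/\ideal^\bsmooth_\edge\to\ideal^\bsmoothr_{\vertex'}/\ideal^\bsmooth_{\vertex'}$ induced, up to sign, by the inclusion $\ideal^\bsmoothr_\edge\hookrightarrow\ideal^\bsmoothr_{\vertex'}$ coming from $\vertex'\in\edge$; and if $\vertex'$ is itself a boundary vertex the target is absent and $H_0=0$ immediately. In the remaining case I would observe that $\ideal^\bsmoothr_{\vertex'}=\ideal^\bsmoothr_\edge+\sum_{\edge''\ni\vertex',\,\edge''\neq\edge}\ideal^\bsmooth_{\edge''}=\ideal^\bsmoothr_\edge+\ideal^\bsmooth_{\vertex'}$, so this map is surjective; its cokernel, which is $H_0(\idealComplex^\bsmoothr/\idealComplex^\bsmooth)$, therefore vanishes, and the \ginoun{} of $\quotientComplex^\bsmoothr$ follows from Corollary~\ref{cor:new_dimension}.

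The argument is routine once the bookkeeping is in place, and there is no serious obstacle; the single point requiring care is the use of the degree hypothesis $\degreeu_{\edge'}\geq\degreeu_\edge$ in the second step, which is precisely what makes $\ideal^\bsmoothr_\edge=\PP_\edge$ already contained in $\ideal^\bsmooth_\vertex$ and hence forces $\idealComplex^\bsmoothr/\idealComplex^\bsmooth$ to be supported only on $\{\edge\}$ together with $\vertex'$. Dropping that hypothesis would leave a nontrivial $\vertex$-component and turn the cokernel computation into something considerably more delicate.
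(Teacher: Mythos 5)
Your argument is exactly the one the paper intends: it omits a written proof, calling the lemma ``immediate and completely analogous to Lemma~\ref{lem:tri_edge},'' and your write-up is that analogous argument -- reduce to $H_0(\idealComplex^{\bsmoothr}/\idealComplex^{\bsmooth})$ via Corollary~\ref{cor:new_dimension} and Lemma~\ref{lem:support}, kill the component at the shared vertex using $\bsmooth(\edge')=-1$ together with $\degreeu_{\edge'}\geq\degreeu_\edge$, and observe surjectivity onto the remaining vertex component since $\ideal^{\bsmoothr}_{\vertex'}=\ideal^{\bsmoothr}_\edge+\ideal^{\bsmooth}_{\vertex'}$. The proof is correct, and you rightly flag that the degree hypothesis is what makes the quotient vanish at $\vertex$, which is the only nontrivial point.
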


We are now in a position to present examples where we compute the dimensions of spline spaces on T-meshes that contain holes.
The approach will be exactly analogous to the one taken in the previous sub-section, i.e., we will try to see if the spline space can be interpreted as a pruned spline space on a T-mesh without holes.

\begin{figure}[t]
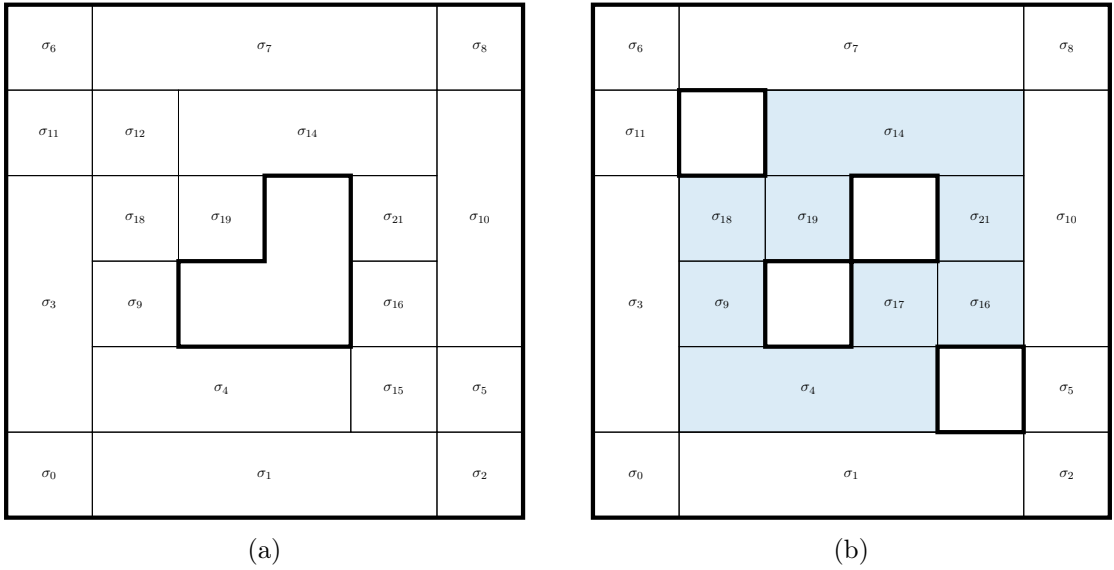

	\centering
	\subcaptionbox{}[0.49\textwidth]{\includetikz{tmesh_tmesh_hole_2}{./tikz}}
	\subcaptionbox{}[0.49\textwidth]{\includetikz{tmesh_tmesh_hole_nud_2}{./tikz}}
	\caption{Example \ref{ex:tmesh} shows how the dimension of $C^1$ splines on the above meshes in figures (a) and (b) can be computed.
	All mesh faces have been labelled, and the boundaries of the respective domains have been displayed in bold; as is clear, the domains are not simply connected.
	On both meshes, we are interested in splines whose pieces are bi-quadratic polynomials on the faces without colour, and bi-cubic polynomials otherwise. (Continues in next figure.)
}
\end{figure}
\begin{figure}\ContinuedFloat
	\centering
	\subcaptionbox{}[0.49\textwidth]{\includetikz{tmesh_tmesh_hole_0}{./tikz}}
	\subcaptionbox{}[0.49\textwidth]{\includetikz{tmesh_tmesh_hole_nud_0}{./tikz}}\\
	\subcaptionbox{}[0.49\textwidth]{\includetikz{tmesh_tmesh_hole_1}{./tikz}}
	\subcaptionbox{}[0.49\textwidth]{\includetikz{tmesh_tmesh_hole_nud_1}{./tikz}}
	\caption{
		(Continues from previous figure.)
		Dimension of splines in (a) and (b) can be easily computed as follows.
		First, we use the results from \cite{mourrain2014dimension} and \cite{toshniwal_polynomial_2019} to get the dimension of splines on meshes in (c) and (d), respectively.
		Thereafter, we use Corollary \ref{cor:new_dimension} to compute the dimension when smoothness across the dashed edges in (e) and (f) has been reduced to $\bsmoothr(\edge) = -1$.
		This decouples the faces inside the dashed region from those outside the dashed region.
		Then, the required dimension can be computed using Theorem \ref{thm:hole_dimension}, i.e., by subtracting $\dimwp{\PP_\face}$ for each $\face$ contained inside the dashed region.
	}\label{fig:tmesh_hole}
\end{figure}

\begin{example}[$C^1$ splines on domains with holes]\label{ex:tmesh}
	Consider the problem of building $C^1$ splines on the two domains shown in Figure \ref{fig:tmesh_hole}(a) and (b).
	On the mesh in (a), we are interested in splines that are biquadratic polynomials when restricted to any mesh face.
	On the mesh in (b), on the other hand, we are interested in splines that are biquadratic polynomials restricted to the white faces, and bicubic polynomials when restricted to the blue faces.
	Proceeding as in the case of triangulations, we will compute the dimension of such spline spaces by interpreting them as pruned spline spaces on the meshes in (c) and (d).
	
	The dimension of splines on the mesh in (c) can be computed using the results from \cite{mourrain2014dimension}, while the dimension on the mesh in (d) can be computed using \cite{toshniwal_polynomial_2019}.
	Both are computed to be $30$ and $50$, respectively.
	Then, Lemma \ref{lem:tmesh_segment} allows us to see that, for the mesh in (c), we can reduce the smoothness across any segment that is composed of at least two edges.
	For the mesh in (d), Lemma \ref{lem:tmesh_segment} allows us to reduce the smoothness across any edge of the mesh.
	Then, a combination of Lemma \ref{lem:tmesh_segment} and \ref{lem:tmesh_edge} allows us to reduce the smoothness across all dashed edges in (e) and (f) to $-1$.
	The dimensions of the resulting spline spaces are $58$ and $118$.
	Thereafter, we can use Theorem \ref{thm:hole_dimension} to compute the dimension of splines on the meshes in (a) and (b), respectively, as $31$ and $54$; c.f. the supplementary M2 scripts provided with this paper.
	(Also, recall Remark \ref{rem:dim_increase}.)
\end{example}

	% conclusions
	\section{Conclusions}

Piecewise-polynomial splines are extensively applied in the fields such as computer-aided geometric design \cite{farin2002handbook} and numerical analysis \cite{hughes2005isogeometric}.
In practice, {spline refinements} -- degree elevation, smoothness reduction, mesh subdivision -- are some of the most important {exact} operations that can help enhance the approximation power of the spline space.
The ability to do so in a local manner is central to efficient applications of splines.

In this paper, we study the problem of dimension computation for splines while focusing on local smoothness reduction.
Starting from smooth splines on arbitrary polygonal meshes of $\RR^2$, we derive results that allow us to compute the exact spline space dimension once smoothness requirements across a subset of the mesh edges have been relaxed.
The derived results are very widely applicable, and in order to explore their specific implications we restrict our focus -- we show how they can be used to compute the dimension of splines on polgyonal meshes, triangulations and T-meshes of arbitrary topology.

For instance, our results allow us to compute the dimension of splines on T-meshes with holes; see Figure \ref{fig:tmesh_hole}.
Such splines can then be used to build smooth surfaces of arbitrary topologies -- a major application of splines in geometric modelling and numerical analysis \cite{toshniwal2017smooth}.
The construction of a spline-space basis that can handle such applications is the focus of ongoing research.
%An example is shown in Figure \ref{fig:conclusions_tmesh} for a T-mesh with one hole; the surfaces in Figures \ref{fig:conclusions_tmesh}(b) and \ref{fig:conclusions_tmesh}(c) can be built using splines on the mesh shown in Figure \ref{fig:conclusions_tmesh}(a).
%\begin{figure}[h]
%	\centering
%	\subcaptionbox{}[0.49\textwidth]{\resizebox{0.28\textwidth}{!}{\includetikz{abstract-figure0}{./tikz}}}
%	\subcaptionbox{}[0.49\textwidth]{\resizebox{0.28\textwidth}{!}{\includetikz{abstract-figure0}{./tikz}}}
%	\caption{An example application }
%	\label{fig:conclusions_tmesh}
%\end{figure}

Another particularly interesting research direction is the one opposite to the one we study here -- the \emph{inexact} operation of \emph{spline coarsening}.
This is also very useful in practice.
For example, when numerically solving a PDE, if a complex solution feature simplifies over time, one would also want to reduce the spline space's approximation power for efficiency.
Studying this setting -- where a richer spline space is used to compute the dimension of its subspace -- will likely require an entirely different approach than the one adopted here.
%Another focus of ongoing research is the construction of a spline basis on the kinds of arbitrary topology meshes considered herein.

%Our results are derived using methods from homological algebra, first used by Billera \cite{billera1988homology} in the study of splines.
%Our main line of reasoning is conceptually similar to other approaches recently formulated in the literature for studying local mesh subdivision \cite{schenck_subdivision_2018} and local degree elevation \cite{toshniwal2019polynomial,toshniwal_polynomial_2019}, namely, instead of studying mixed smoothness splines from scratch, we study them in relation to a proper subspace for which the dimension-computation problem is well-understood.
%
%

	\bibliographystyle{plain}
	\bibliography{bibliography}

\end{document}